\newcommand{\lyxmathsym}[1]{\ifmmode\begingroup\def\b@ld{bold}
  \text{\ifx\math@version\b@ld\bfseries\fi#1}\endgroup\else#1\fi}
\numberwithin{equation}{section}
\numberwithin{figure}{section}
\theoremstyle{plain}
\newtheorem{thm}{\protect\theoremname}[section]
  \theoremstyle{definition}
  \newtheorem{defn}[thm]{\protect\definitionname}
  \theoremstyle{remark}
  \newtheorem*{rem*}{\protect\remarkname}
  \theoremstyle{plain}
  \newtheorem{prop}[thm]{\protect\propositionname}
  \theoremstyle{plain}
  \newtheorem{lem}[thm]{\protect\lemmaname}
  \theoremstyle{plain}
  \newtheorem*{lem*}{\protect\lemmaname}
  \theoremstyle{plain}
  \newtheorem{cor}[thm]{\protect\corollaryname}
  \newtheorem*{thm*}{\protect\theoremname}
   \newtheorem{rem}[thm]{\protect\remarkname}
   \theoremstyle{plain}
  \providecommand{\corollaryname}{Corollary}
  \providecommand{\definitionname}{Definition}
  \providecommand{\lemmaname}{Lemma}
  \providecommand{\propositionname}{Proposition}
  \providecommand{\remarkname}{Remark}
\providecommand{\theoremname}{Theorem}
\begin{document}

\title{Strichartz Estimates for Charge Transfer Models}

\author{Gong Chen}

\date{07/20/15}

\email{gc@math.uchicago.edu}
\urladdr{http://www.math.uchicago.edu/\textasciitilde{}gc/}

\address{Department of Mathematics, The University of Chicago, 5734 South
University Avenue, Chicago, IL 60615, U.S.A}

\keywords{Strichartz estimates; charge transfer model; energy boundedness.}

\subjclass[2000]{35Q35; 37K40.}

\thanks{The author feels deeply grateful to his advisor Professor Wilhelm
	Schlag for his kind encouragement, discussions, comments and all the support. The author also wants to thank anonymous referees for their very careful reviews}
\begin{abstract}
In this note, we prove Strichartz estimates for scattering states
of scalar charge transfer models in $\mathbb{R}^{3}$. Following 
the idea of Strichartz estimates which based on \cite{CM,RSS},
we also show the energy of the whole evolution is bounded independently
of time without using the phase space method, for example, in \cite{Graf}.
One can easily generalize our arguments to $\mathbb{R}^{n}$ for $n\geq3$.
Finally, in the last section, we discuss the extension of these results
to matrix charge transfer models in $\mathbb{R}^{3}$.
\end{abstract}
\maketitle

\section{Introduction}

In this note, following the work of \cite{RSS,Cai}, charge transfer
models for Schr\"odinger equations in $\mathbb{R}^{3}$ will be considered.
We study the time-dependent charge transfer Hamiltonian 
\begin{equation}
H(t)=-\frac{1}{2}\Delta+\sum_{j=1}^{m}V_{j}(x-\vec{v}_{j}t)\label{eq:11}
\end{equation}
with rapidly decaying smooth potentials $V_{j}(x)$, say, exponentially
decaying and a set of mutually non-parallel constant velocities $\vec{v}_{j}$.
Strichartz estimates for the evolution 
\begin{equation}
\frac{1}{i}\partial_{t}\psi+H(t)\psi=0\label{eq:12}
\end{equation}
associated with a charge transfer Hamiltonian $H(t)$ will be proved.

The starting point is the well-known $L^{p}$ estimates for the free
Schr\"odinger equation ($H_{0}=-\frac{1}{2}\Delta$) on $\mathbb{R}^{n}$
:

\begin{equation}
\left\Vert e^{iH_{0}t}f\right\Vert _{L^{p}}\leq C_{p}\left|t\right|^{-n(\frac{1}{2}-\frac{1}{p})}\left\Vert f\right\Vert _{L^{p'}},\label{eq:13}
\end{equation}
where $2\leq p\leq\infty$, $\frac{1}{p}+\frac{1}{p'}=1$.

To analyze the dispersive estimate of linear Schr\"odinger equations
with potentials, we consider the dispersive estimates of the Schr\"odinger
flow 
\begin{equation}
e^{itH}P_{c},\,\,\,\, H=-\frac{1}{2}\Delta+V\label{eq:15}
\end{equation}
on $\mathbb{R}^{n}$, where $P_{c}$ is the projection onto the continuous
spectrum of $H$. For Schr\"odinger equations with potentials, there may be bound states, i.e., $L^{2}$ eigenfunctions
of $H$. Under the evolution $e^{itH}$, such bound states are merely
multiplied by oscillating factors and thus do not disperse. So we
need to project away any bound state.  $V$ is a real-valued potential that is assumed
to satisfy some decay condition at infinity. This decay is typically
expressed in terms of the point-wise decay $|V(x)|\text{\ensuremath{\le}}C\left\langle x\right\rangle ^{-\beta}$,
for all $x\in\mathbb{R}^{n}$ and for some $\beta>0$. We use the
notation $\langle x\rangle=\left(1+|x|^{2}\right)^{\frac{1}{2}}$.
Occasionally, we will use an integrability condition $V\in L^{p}\left(\mathbb{R}^{n}\right)$
(or a weighted variant of it) instead of a point-wise condition.  These decay conditions will
also be such that $H$ is asymptotically complete:
\[
L^{2}(\mathbb{R}^{n})=L_{p.p.}^{2}\left(\mathbb{R}^{n}\right)\oplus L_{a.c.}^{2}\left(\mathbb{R}^{n}\right)
\]
where the spaces on the right-hand side refer to the span of all eigenfunctions,
and the absolutely continuous subspace, respectively.

The dispersive estimate for the linear Schr\"odinger equations with
potentials, which we will be most concerned with is of the form 

\begin{equation}
\sup_{t\neq0}|t|^{\frac{n}{2}}\left\Vert e^{itH}P_{c}f\right\Vert _{L_{x}^{\infty}}\leq C\left\Vert f\right\Vert _{L_{x}^{1}},\,\,\,\,\,\forall f\in L_{x}^{1}\left(\mathbb{R}^{n}\right)\cap L_{x}^{2}\left(\mathbb{R}^{n}\right).\label{eq:16}
\end{equation}
 Interpolating with the $L^{2}$ bound $\left\Vert e^{itH}P_{c}f\right\Vert _{L_{x}^{2}}\leq C\left\Vert f\right\Vert _{L_{x}^{2}}$,
we get 

\begin{equation}
\sup_{t\neq0}|t|^{n(\frac{1}{2}-\frac{1}{p})}\left\Vert e^{itH}P_{c}f\right\Vert _{L_{x}^{p'}}\leq C\left\Vert f\right\Vert _{L_{x}^{p}}\,\,\,\,\,\forall f\in L_{x}^{1}(\mathbb{R}^{n})\cap L_{x}^{2}(\mathbb{R}^{n}),\label{eq:17}
\end{equation}
where $1\leq p\leq2$. 

It is well-known that via a $T^{*}T$ argument the dispersive estimate
\eqref{eq:16} gives rise to the class of Strichartz estimates

\begin{equation}
\left\Vert e^{itH}P_{c}f\right\Vert _{L_{t}^{q}L_{x}^{p}}\lesssim\left\Vert f\right\Vert _{L^{2}}\label{eq:ordstri}
\end{equation}
 for all $\frac{2}{q}+\frac{n}{p}=\frac{n}{2}$. The endpoint $q=2$
holds for $n\geq3$ but it is not captured by this approach, see \cite{KT}.

Roughly speaking, Strichartz estimates can be regarded as smoothing
effects in $L_{x}^{p}$ spaces. For example, when we consider the
free Schr\"odinger equation, compared with the trivial conservation of
$L^{2}$ norm of the solution, in Strichartz estimates one gains space
integrability from $p=2$ to $p>2$, but one loses time integrability
from $q=\infty$ to $q<\infty$. To be more precise, we can take a function $g\in L^{2}$ but $g\notin L_{x}^{p}$ for $p>2$.
Then we take $f=e^{\frac{1}{2}it_{0}\Delta}g$ as
the initial data for the free linear Schr\"odinger equation, then
we can see at $t=t_{0}$, $e^{-i\frac{1}{2}t_{0}\Delta}f\notin L_{x}^{p}$.
So without integration or average on time, there is no hope to get
$L_{x}^{p}$ estimate for all the time for general $L^{2}$ initial
data.  Strichartz estimates are crucial for the study of long-time behavior of associated nonlinear models.

For the results and historical progress of dispersive estimates and smoothing effects of Schr\"odinger operators, one can find
further details and references in \cite{Sch}.

There are extra difficulties for Schr\"odinger equations with time-dependent potentials. For example, given a general time-dependent potential $V(x,t)$, it is not clear how to introduce an analog of bound states and the spectral projection. And the evolution of equation might not satisfy group properties any more. In this paper, we focus on a particular case of time-dependent potentials, i.e. the charge transfer models in $\mathbb{R}^{3}$.  

Firstly, we consider the scalar model in the following sense:
\begin{defn}
\label{Charge} By a charge transfer model we mean a Schr\"odinger
equation 
\begin{equation}
\frac{1}{i}\partial_{t}\psi-\frac{1}{2}\Delta\psi+\sum_{j=1}^{m}V_{j}(x-\vec{v}_{j}t)\psi=0,\label{eq:18}
\end{equation}
\[
\psi|_{t=0}=\psi_{0},\,\, x\in\mathbb{R}^{3}.
\]

where $\vec{v}_{j}$'s are distinct vectors in $\mathbb{R}^{3}$ ,
and the real potentials $V_{k}$ are such that for every $1\leq k\leq m$ 

1) $V_{k}$ is time-independent and decays exponentially (or has compact
support)

2) $0$ is neither an eigenvalue nor a resonance of the operators
\begin{equation}
H_{k}=\lyxmathsym{\textminus}\frac{1}{2}\Delta+V_{k}(x).\label{eq:19}
\end{equation}

\end{defn}
Recall that $\psi$ is a resonance at $0$ if it is a distributional
solution of the equation $H_{k}\psi=0$ which belongs to the space $\left\{ f:\,\left\langle x\right\rangle ^{-\sigma}f\in L^{2}\right\} $ for
any $\sigma>\frac{1}{2}$, but not for $\sigma=0$.

To simplify our argument, we discuss when $m=2$ case with $V_{1}$
is stationary and $V_{2}$ moves along $\overrightarrow{e_{1}}$ with
the unit speed. It is easy to see our arguments work for general cases.
\begin{rem*}
The assumptions are always assumed when we want to prove dispersive
estimate and Strichartz estimates, e.g, \cite{JSS,Sch,Ya,RSS,Cai}.
The decay required of the potentials is not optimal but merely for convenience. 
\end{rem*}
An indispensable tool in the study of charge transfer models are the Galilei transformations

\begin{equation}
\mathit{\mathfrak{g}}_{\vec{v},y}(t)=e^{i\frac{\left|\vec{v}\right|^{2}}{2}t}e^{ix\cdot\vec{v}}e^{-i\left(y+\vec{v}t\right)\cdot\vec{p}},\label{eq:110}
\end{equation}
cf. \cite{Graf,Cai,RSS}, where $\vec{p}=-i\vec{\nabla}$. They are
the quantum analogues of the classical Galilei transforms 
\begin{equation}
x\mapsto x-t\vec{v}-y,\,\,\,\,\,\,\vec{p}\mapsto\vec{p}-\vec{v}.\label{eq:111}
\end{equation}
To see this, we take a Schwartz function $f$ such that $f$ and $\hat{f}$
are centered around the origin, then $\mathit{\mathfrak{g}}_{\vec{v},y}(t)f$
is centered around $t\vec{v}+y$, and $\widehat{\mathit{\mathfrak{g}}_{\vec{v},y}(t)f}$
is centered around $\vec{v}$. The Galilei transformations have a
very important conjugacy property:

\begin{equation}
\mathit{\mathfrak{g}}_{\vec{v},y}(t)e^{it\frac{\Delta}{2}}=e^{it\frac{\Delta}{2}}\mathit{\mathfrak{g}}_{\vec{v},y}(0).\label{eq:112}
\end{equation}
Moreover, notice that with $H=-\frac{1}{2}\Delta+V$, then 
\begin{equation}
\psi(t):=\mathit{\mathfrak{g}}_{\vec{v},y}(t)e^{-itH}\mathit{\mathfrak{g}}_{\vec{v},y}(0)\psi_{0},\,\,\,\mathit{\mathfrak{g}}_{\vec{v},y}(t)=e^{-iy\vec{v}}\mathit{\mathfrak{g}}_{-\vec{v},-y}(t),\label{eq:114}
\end{equation}
solves 
\begin{equation}
\frac{1}{i}\partial_{t}\psi-\frac{1}{2}\Delta\psi+V\left(\cdot-t\vec{v}-y\right)\psi=0,\,\,\,\psi|_{t=0}=\psi_{0}.\label{eq:115}
\end{equation}
Another important property of the Galilei transformations is that
$\mathit{\mathfrak{g}}_{\vec{v},y}(t)$ are isometries in all $L^{p}$
spaces. Finally, in our case, as discussed above, we always assume
$y=0$. To simplify our notations, we write $\mathit{\mathfrak{g}}_{\vec{v}}(t):=\mathit{\mathfrak{g}}_{\vec{v},0}(t)$
and notice that $\mathit{\mathfrak{g}}_{\vec{e_{1}}}(t)^{-1}=\mathit{\mathfrak{g}}_{-\vec{e_{1}}}(t)$.

We recall some consequences from \cite{RSS,Cai}.
Again, we consider
\begin{equation}
\frac{1}{i}\partial_{t}\psi-\frac{1}{2}\Delta\psi+V_{1}\psi+V_{2}\left(\cdot-t\vec{e_{1}}\right)\psi=0,\,\,\,\psi|_{t=0}=\psi_{0},\label{eq:1111}
\end{equation}
 with $V_{1}$ and $V_{2}$ decaying rapidly. Let $w_{1},\,\ldots,\, w_{m}$
and $u_{1},\,\ldots,\, u_{\ell}$ be the normalized bound states of
$H_{1}$ and $H_{2}$ associated to the negative eigenvalues $\lambda_{1},\,\ldots,\,\lambda_{m}$
and $\mu_{1},\,\ldots,\,\mu_{\ell}$ respectively (notice that by
our assumptions, $0$ is not an eigenvalue). 

Following the notations in \cite{RSS}, we denote by $P_{b}\left(H_{1}\right)$
and $P_{b}\left(H_{2}\right)$ the projections onto the the bound states
of $H_{1}$ and $H_{2}$, respectively, and let $P_{c}\left(H_{i}\right)=Id-P_{b}\left(H_{i}\right),\, i=1,2$.
To be more explicit, we have 
\begin{equation}
P_{b}\left(H_{1}\right)=\sum_{i=1}^{m}\left\langle \cdot,w_{i}\right\rangle w_{i},\,\,\,\,\, P_{b}\left(H_{2}\right)=\sum_{j=1}^{\ell}\left\langle \cdot,u_{j}\right\rangle u_{j}.\label{eq:116}
\end{equation}
It is well-known, from the standard case with stationary potentials that
we need to project away from bound states as we discussed at the very
beginning. Here following \cite{RSS}, we recall the analogous condition
in our case.
\begin{defn}
\label{SP}Let $U(t,0)\psi_{0}=\psi\left(t,x\right)$  be the solution
of equation \eqref{eq:1111}. We say that $\psi_{0}$ or $\psi\left(x,t\right)$
is asymptotically orthogonal to the bound states of $H_{1}$ and $H_{2}$
if 
\begin{equation}
\left\Vert P_{b}\left(H_{1}\right)U(t,0)\psi_{0}\right\Vert _{L^{2}}+\left\Vert P_{b}\left(H_{2},t\right)U(t,0)\psi_{0}\right\Vert _{L^{2}}\rightarrow0,\,\,\, t\rightarrow\pm\infty.\label{eq:117}
\end{equation}
Here 
\[
P_{b}\left(H_{2},t\right):=\mathit{\mathfrak{g}}_{\vec{e_{1}}}(t)^{-1}P_{b}\left(H_{2}\right)\mathit{\mathfrak{g}}_{\vec{e_{1}}}(t),\,\,\,\,\forall t.
\]
It is clear that all $\psi_{0}$ that satisfy \eqref{eq:117} form a closed subspace of $L^{2}\left(\mathbb{R}^{n}\right)$. We call elements in this subspace scattering states at $t=0$ and denote the subspace by $H_{s}(0)$. We name $H_{s}(0)$ as scattering space at $t=0$. With $H_{s}(0)$, we define $P_{s}(0)$ to be the projection onto $H_{s}(0)$.
\end{defn}
\begin{rem*}
	The subspace above coincides
	with the space of scattering states for the charge transfer problem which appears in
	Graf's asymptotic completeness result \cite{Graf}. We will see more details in Section \ref{sec:prelim}.
\end{rem*}

We now formulate our main results.

\begin{thm}[Strichartz estimates]\label{thm:Strich} Consider the charge transfer model as in Definition
	\ref{Charge} with two potentials in $\mathbb{R}^{3}$ as above. Suppose the initial data $\psi_{0}\in L^{2}\left(\mathbb{R}^{3}\right)$ is asymptotically orthogonal to the bound states of $H_{1}$
	and $H_{2}$ in the sense of Definition \ref{SP}. Then for $\psi(t,x)=U(t,0)\psi_{0}$ and  a Schr\"odinger admissible pair
	$(p,q)$ in $\mathbb{R}^{3}$, i.e., 
	\begin{equation}
		\frac{2}{p}+\frac{3}{q}=\frac{3}{2}
	\end{equation}
	with $2\leq q\leq\infty,\,p\geq2$, we have 
	\begin{equation}
		\left\Vert \psi\right\Vert _{L_{t}^{p}\left([0,\infty),\,L_{x}^{q}\right)}\leq C\left\Vert \psi_{0}\right\Vert _{L_{x}^{2}}.\label{eq:Stri}
	\end{equation}
	\end{thm}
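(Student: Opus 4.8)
The plan is to reduce the full family of estimates to the single double-endpoint Strichartz pair $(p,q)=(2,6)$ together with one weighted $L^{2}_{t}L^{2}_{x}$ local decay estimate for the charge transfer flow, and then to obtain the latter from the single-center theory and the scattering hypothesis. As in the paragraph preceding Theorem~\ref{thm:Strich} I take $m=2$, with $V_{1}$ stationary and $V_{2}$ moving along $\vec{e_{1}}$ at unit speed, and I abbreviate $W(s):=V_{1}+V_{2}(\cdot-s\vec{e_{1}})$. Since $H(t)$ is self-adjoint for every $t$, mass is conserved, $\|\psi(t)\|_{L^{2}}=\|\psi_{0}\|_{L^{2}}$, which is the admissible pair $(\infty,2)$. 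Every admissible pair with $p\ge 2$ lies on the segment joining $(\infty,2)$ to $(2,6)$ in the $(1/p,1/q)$ plane, so
\[
\|\psi\|_{L^{p}_{t}L^{q}_{x}}\le\|\psi\|_{L^{2}_{t}L^{6}_{x}}^{2/p}\,\|\psi\|_{L^{\infty}_{t}L^{2}_{x}}^{1-2/p}
\]
by H\"older in $t$ and interpolation in $x$; hence it suffices to prove $\|\psi\|_{L^{2}_{t}([0,\infty),\,L^{6}_{x})}\lesssim\|\psi_{0}\|_{L^{2}}$.

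For that I would use Duhamel's formula relative to the free flow, $\psi(t)=e^{it\Delta/2}\psi_{0}-i\int_{0}^{t}e^{i(t-s)\Delta/2}W(s)\psi(s)\,ds$. The free double-endpoint Strichartz estimate bounds the homogeneous term by $\|\psi_{0}\|_{L^{2}}$, and the retarded double-endpoint inhomogeneous Strichartz estimate of Keel--Tao \cite{KT} (available directly at the endpoint) bounds the second term by $\|W(s)\psi(s)\|_{L^{2}_{t}L^{6/5}_{x}}$. Because each $V_{j}$ decays exponentially, H\"older's inequality with $\frac{5}{6}=\frac{1}{3}+\frac{1}{2}$ gives, for a small $\delta>0$, $\|V_{1}\psi(s)\|_{L^{6/5}_{x}}\lesssim\|e^{-\delta|x|}\psi(s)\|_{L^{2}_{x}}$ and $\|V_{2}(\cdot-s\vec{e_{1}})\psi(s)\|_{L^{6/5}_{x}}\lesssim\|e^{-\delta|x-s\vec{e_{1}}|}\psi(s)\|_{L^{2}_{x}}$. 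Thus Theorem~\ref{thm:Strich} will follow once one proves the weighted local decay estimate
\[
\int_{0}^{\infty}\Big(\big\|e^{-\delta|x|}\psi(s)\big\|_{L^{2}_{x}}^{2}+\big\|e^{-\delta|x-s\vec{e_{1}}|}\psi(s)\big\|_{L^{2}_{x}}^{2}\Big)\,ds\lesssim\|\psi_{0}\|_{L^{2}}^{2}.
\]

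This weighted estimate --- a local energy decay bound for the charge transfer flow --- is the heart of the matter, and the step I expect to be the main obstacle. Following the strategy of \cite{RSS}, I would prove it by a coupled Duhamel iteration, expanding against $e^{-itH_{1}}$ to control the origin-centered weight and against the Galilei conjugate $\mathit{\mathfrak{g}}_{\vec{e_{1}}}(t)^{-1}e^{-itH_{2}}\mathit{\mathfrak{g}}_{\vec{e_{1}}}(t)$, via \eqref{eq:112} and \eqref{eq:114}, to control the moving weight. The single-center input is the smoothing bound $\|\langle x\rangle^{-\alpha}e^{-itH_{j}}P_{c}(H_{j})f\|_{L^{2}_{t}L^{2}_{x}}\lesssim\|f\|_{L^{2}}$ for $\alpha>1$, which follows from the assumed estimates \eqref{eq:16} and \eqref{eq:ordstri} for $e^{itH_{j}}P_{c}(H_{j})$ (equivalently, from the limiting-absorption resolvent bounds for $H_{j}$, valid because $0$ is neither an eigenvalue nor a resonance of $H_{j}$) together with its retarded inhomogeneous counterpart. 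The bound-state components of the homogeneous terms are handled by the scattering hypothesis of Definition~\ref{SP}: writing $a_{i}(t)=\langle\psi(t),w_{i}\rangle$, the equation gives $\dot{a}_{i}(t)=-i\lambda_{i}a_{i}(t)-i\langle V_{2}(\cdot-s\vec{e_{1}})\psi(s),w_{i}\rangle$, and since $w_{i}$ decays exponentially while $V_{2}(\cdot-s\vec{e_{1}})$ is concentrated at distance $\sim s$ from the origin, the forcing is $\lesssim e^{-cs}\|e^{-\delta|x-s\vec{e_{1}}|}\psi(s)\|_{L^{2}_{x}}$; the requirement $\|P_{b}(H_{1})\psi(t)\|_{L^{2}}\to0$ then pins $a_{i}(0)$ down as a convergent integral over $[0,\infty)$ of that forcing, so that $a_{i}(t)=-ie^{-i\lambda_{i}t}\int_{t}^{\infty}e^{i\lambda_{i}s}\langle V_{2}(\cdot-s\vec{e_{1}})\psi(s),w_{i}\rangle\,ds$ and a Cauchy--Schwarz plus Fubini argument gives $\|P_{b}(H_{1})\psi\|_{L^{2}_{t}L^{2}_{x}}\lesssim\|e^{-\delta|x-s\vec{e_{1}}|}\psi\|_{L^{2}_{t}L^{2}_{x}}$, with the symmetric statement for $P_{b}(H_{2},t)$. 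After this reorganization the iteration becomes a linear system for the two weighted norms. The genuinely delicate point is bounding the ``cross'' contributions, in which the evolution adapted to one center meets the potential and the weight localized at the other center: here one must combine the single-center dispersive/smoothing bounds with the unit-speed separation of the centers so that these off-diagonal terms carry an integrable gain in $s$ and the coupled system closes, yielding $\|e^{-\delta|x|}\psi\|_{L^{2}_{t}L^{2}_{x}}+\|e^{-\delta|x-s\vec{e_{1}}|}\psi\|_{L^{2}_{t}L^{2}_{x}}\lesssim\|\psi_{0}\|_{L^{2}}$. The stated conclusion is on $[0,\infty)$; the bound on $(-\infty,0]$, and hence on $\mathbb{R}$, follows by time reversal, Definition~\ref{SP} being symmetric in $t\to\pm\infty$.
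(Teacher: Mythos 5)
Your reduction step is correct and is essentially the paper's: apply the Keel--Tao endpoint estimates to $i\psi_t+\tfrac12\Delta\psi = V_1\psi+V_2(\cdot-t\vec{e_1})\psi$, and then use the rapid decay of the potentials plus H\"older to convert the inhomogeneous $L^2_tL^{6/5}_x$ norm into two weighted $L^2_tL^2_x$ local-energy norms, one centered at the origin and one following the moving center. (Your further interpolation against mass conservation to reduce to $(2,6)$ is harmless, but unnecessary --- the Keel--Tao inequality already gives the full admissible range directly.) Your treatment of the bound-state components via backward integration of the ODE for $a_i(t)=\langle\psi(t),w_i\rangle$ is valid and differs slightly from the paper, which instead invokes the exponential decay of $P_b(H_1)\psi$ and $P_b(H_2,t)\psi$ from Proposition \ref{prop:(RSS Prop 3.1)}; both routes work.

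The genuine gap is in the proof of the weighted local decay estimate, which you correctly flag as the heart of the matter but then dispatch with a sketch that would not close as described. In the paper this estimate is not a single $L^2_tL^2_x$ inequality proven by a ``coupled Duhamel iteration becoming a linear system''; it is a pair of estimates proven simultaneously --- a pointwise-in-time weighted dispersive bound $\|\langle x-x_0\rangle^{-\sigma}U(t,t_0)P_s(t_0)\langle x-x_1\rangle^{-\sigma}\|_{2\to2}\le C\langle t-t_0\rangle^{-3/2}$ (Lemma \ref{lem:We1}) and the integrated bound (Lemma \ref{lem:We2}) --- and the two feed into each other inside a bootstrap. One seeds the bootstrap by Gronwall to get a $T$-dependent constant $C(T)$, then decomposes the Duhamel integral into near-$t_0$, middle, and near-$t$ pieces with a parameter $A$, expands the near-$t$ piece further against $e^{-itH_1}P_c(H_1)$, introduces a second cutoff $B$, and finally splits the remaining kernel into low and high frequency parts: the low-frequency piece is controlled by non-stationary phase and the spatial separation of $V_1$ from $V_2(\cdot-\tau\vec{e}_1)$, while the high-frequency piece is controlled by a Kato smoothing estimate plus commutator bounds giving a $M^{-1/2}$ gain. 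Only after choosing $A$, $B$, $M$ large can one improve $C(T)$ to $\tfrac12 C(T)+C$ with $C$ independent of $T$, and pass to $T\to\infty$. None of this structure --- in particular the indispensable pointwise weighted dispersive bound, the Gronwall seed, the three time-scale decomposition, and the low/high-frequency split with Kato smoothing --- appears in your sketch, and without it the ``cross'' terms you identify do not visibly carry the claimed integrable gain. So the reduction is right and the general direction (RSS-style expansion against the single-center flows) is right, but the proof of Lemmas \ref{lem:We1}--\ref{lem:We2}, which is where all the work lives, is essentially missing.
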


We also have the boundedness of the energy.
\begin{thm}\label{thm:energy}
	Let $\psi_{0}\in H^{1}\left(\mathbb{R}^{3}\right)$ and $\psi(t,x)=U(t,0)\psi_{0}$
	be a solution to \eqref{eq:1111} with the initial data $\psi_{0}$. Then 
	\begin{equation}
	\sup_{t\in\mathbb{R}}\left\Vert U(t,0)\psi_{0}\right\Vert _{H^{1}}\leq C\left\Vert \psi_{0}\right\Vert _{H^{1}}.\label{eq:32}
	\end{equation}
\end{thm}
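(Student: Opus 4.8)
The plan is to bootstrap from the already-established Strichartz estimates of Theorem~\ref{thm:Strich}, together with the dispersive/Strichartz theory for each single stationary Hamiltonian $H_1$ and $H_2$ (the latter conjugated by a Galilei transform). The difficulty is that $H(t)$ is genuinely time-dependent, so one cannot simply differentiate $e^{-itH}$; instead one must commute $\nabla$ through the Duhamel formula and control the resulting potential terms. First I would reduce to the case $\psi_0 \in H_s(0)$: since the bound states of $H_1$ and $H_2$ are Schwartz (exponential decay of the eigenfunctions follows from the exponential decay of $V_k$ and Agmon estimates), the complementary piece of $\psi_0$ contributes only smooth, rapidly decaying, uniformly-in-time bounded terms to the evolution, so its $H^1$ norm is automatically controlled; thus it suffices to bound $\|\nabla U(t,0)\psi_0\|_{L^2}$ for scattering data, where Theorem~\ref{thm:Strich} applies.

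Next I would write the Duhamel formula peeling off one potential, say
\[
U(t,0)\psi_0 = e^{it\Delta/2}\psi_0 - i\int_0^t e^{i(t-s)\Delta/2}\bigl(V_1 + V_2(\cdot - s\vec e_1)\bigr)U(s,0)\psi_0\,ds,
\]
apply $\nabla$, and use the endpoint (and dual-endpoint) Strichartz inequalities for the free propagator. Writing $\phi(t) = \nabla U(t,0)\psi_0$, one gets $\phi$ in terms of $\nabla e^{it\Delta/2}\psi_0$ plus an inhomogeneous term involving $(\nabla V_j)\,U(s,0)\psi_0$ and $V_j\,\nabla U(s,0)\psi_0 = V_j\,\phi(s)$. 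The first of these is handled by putting $\nabla V_j \in L_x^{3/2}$ (exponential decay) against $U(s,0)\psi_0 \in L_t^p L_x^q$ with the dual admissible exponents, which is finite by Theorem~\ref{thm:Strich}; the crucial point is that this bound is \emph{global in time}. The second, $V_j \phi(s)$, is the potentially dangerous self-referential term.

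The main obstacle, as anticipated, is closing the estimate on $\int_0^t e^{i(t-s)\Delta/2} V_j \phi(s)\,ds$ without a smallness hypothesis on $V_j$. I would handle this by a \emph{local-in-time} bootstrap made uniform via the Strichartz control already in hand: on any interval $[T,T+\delta]$, the map $\phi \mapsto \int_T^t e^{i(t-s)\Delta/2} V_j \phi(s)\,ds$ is a contraction on $L_t^\infty([T,T+\delta];L_x^2)\cap L_t^p([T,T+\delta];L_x^q)$ once $\delta$ is small enough depending only on $\|V_j\|$ (not on $T$), because the Strichartz norm of the free flow applied to an $L_x^2$ datum over a short interval is small; this gives $\sup_{[T,T+\delta]}\|\phi\|_{L^2} \le C\bigl(\|\phi(T)\|_{L^2} + \|(\nabla V_j)\,U(\cdot,0)\psi_0\|_{L_t^{p'}([T,T+\delta]; L_x^{q'})}\bigr)$. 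Since the inhomogeneous pieces are globally controlled by $\|\psi_0\|_{L^2}$ and $\|\psi_0\|_{H^1}$ via Theorem~\ref{thm:Strich} and are summable-in-intervals (the $L_t^{p'}$ norm over disjoint intervals is dominated by the global one when $p' \le 2 \le$ \dots, or one just uses that there are finitely many intervals on any compact set and then takes $t\to\infty$ using decay of the tails), iterating over consecutive intervals yields $\sup_{t\in[0,\infty)}\|\phi(t)\|_{L^2} \lesssim \|\psi_0\|_{H^1}$. The same argument on $(-\infty,0]$, or symmetry of the model under time reversal, gives the two-sided bound~\eqref{eq:32}. I would also remark that this route avoids Graf's phase-space/propagation-estimate machinery, as promised in the abstract: everything reduces to Strichartz estimates plus a uniform short-time perturbative argument.
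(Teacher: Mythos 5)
The core idea of differentiating the equation, treating $v=\nabla\psi$ as a solution of an inhomogeneous charge-transfer evolution with source $F=(\nabla V_1)\psi+(\nabla V_2)(\cdot-t\vec e_1)\psi$, and controlling the source by the already-proven Strichartz bound on $\psi$, is exactly the paper's starting point, and your preliminary reduction to scattering data also matches the paper's use of the asymptotic-completeness decomposition. However, your proposed resolution of the ``dangerous self-referential term'' $V_j\,\nabla\psi$ does not close. The local-in-time contraction on $[T,T+\delta]$ produces, at best, an inequality of the form
\begin{equation*}
\|\phi(T+\delta)\|_{L^2}\leq(1+C\delta^{\theta})\|\phi(T)\|_{L^2}+C\,a_T,\qquad \theta>0,
\end{equation*}
because the free flow preserves $L^2$ exactly while the potential contributes a term of size $\sim\delta^{\theta}\sup_{I}\|\phi\|_{L^2}$; the constant in front of $\|\phi(T)\|_{L^2}$ cannot be pushed to $1$ since $V_j$ has fixed size on every interval. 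Iterating this over $N=T/\delta$ intervals gives $\|\phi(T)\|_{L^2}\lesssim e^{CT\delta^{\theta-1}}\bigl(\|\phi(0)\|_{L^2}+\sum_k a_k\bigr)$, i.e.\ exponential growth in $T$, no matter how summable the $a_k$ are. The square-summability of the inhomogeneous pieces over intervals does not fix this: the obstruction is the multiplicative factor on the homogeneous/self-referential part, not the inhomogeneous sum.

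The paper sidesteps this by never iterating in time. Instead, after verifying that $v=\partial_1\psi$ is itself asymptotically orthogonal to the bound states (this uses the decomposition from Theorem~\ref{thm:ac}, Agmon decay of the eigenfunctions, and the dispersive decay of the free piece), it writes $v(s)=P_s(s)U(s,0)v_0+i\int_0^s P_s(s)U(s,\tau)F(\tau)\,d\tau$ using the \emph{full} propagator $U$, substitutes this into the Duhamel term $\int_0^t e^{i(t-s)\Delta/2}(V_1+V_2(\cdot-s\vec e_1))v(s)\,ds$, and then invokes the weighted $L^2$ estimates Lemma~\ref{lem:We1} and Lemma~\ref{lem:We2} — the same smoothing estimates that drove the Strichartz proof. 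Those weighted estimates supply exactly the global-in-time integrability ($\langle t\rangle^{-3/2}$ decay and $L_t^2$-boundedness of $\langle x-x(t)\rangle^{-\sigma}U(t,t_0)P_s(t_0)$) that is missing from a pure short-time contraction argument. You should replace your interval-iteration step with this substitution and appeal to Lemmas~\ref{lem:We1}--\ref{lem:We2}; the rest of your reduction and your handling of the genuinely inhomogeneous terms $\nabla V_j\cdot\psi$ via Theorem~\ref{thm:Strich} are consistent with what the paper does.
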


The paper is organized as follows: In Section \ref{sec:prelim}, we will recall some results from \cite{RSS,Cai}. Then in Section \ref{sec:Strich}, we establish Strichartz estimates
for the evolution that is not associated to the bound states of $H_{j}$
for the scalar charge transfer model. In Section \ref{sec:energy}, we will show the energy
of the whole evolution is bounded independently of time. Finally,  we will generalize our arguments to non-selfadjoint matrix cases in Section \ref{sec:matrix}.

\section{Preliminaries}\label{sec:prelim}
In this section, we formulate the important results from \cite{RSS,Cai} which are crucial for later sections.

 First of all, if the evolution is asymptotically orthogonal to the bound states of $H_{1}$ and $H_{2}$ , we can actually get a decay rate for \[\left\Vert P_{b}\left(H_{1}\right)U(t,0)\psi_{0}\right\Vert _{L^{2}}+\left\Vert P_{b}\left(H_{2},t\right)U(t,0)\psi_{0}\right\Vert _{L^{2}}\rightarrow0.\]
\begin{prop}[\cite{RSS}, Proposition 3.1]\label{prop:(RSS Prop 3.1)} Let $\psi(t,x)$
	be a solution to \eqref{eq:1111} which is asymptotically orthogonal
	to the bound states of $H_{1}$ and $H_{2}$ in the sense of Definition
	\ref{SP}. Then we have the decay rate that
	\begin{equation}
	\left\Vert P_{b}\left(H_{1}\right)U(t,0)\psi_{0}\right\Vert _{L^{2}}+\left\Vert P_{b}\left(H_{2},t\right)U(t,0)\psi_{0}\right\Vert _{L^{2}}\lesssim e^{-\alpha \left|t\right|}\left\Vert \psi_{0}\right\Vert _{L^{2}}\label{eq:118}
	\end{equation}
	for some $\alpha>0$.\end{prop}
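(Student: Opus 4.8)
The plan is to follow the scheme of \cite{RSS}: regard \eqref{eq:1111} as a perturbation of the \emph{stationary} operator $H_{1}=-\frac{1}{2}\Delta+V_{1}$ by the travelling bump $V_{2}(\cdot-t\vec{e_{1}})$, apply Duhamel's formula relative to $H_{1}$, project onto the point spectrum of $H_{1}$, and exploit that the travelling bump has moved exponentially far from the (exponentially localized) bound states of $H_{1}$; the $P_{b}(H_{2},t)$ term is then handled by the same argument in the frame comoving with $V_{2}$. Throughout I will use that $U(t,0)$ is unitary on $L^{2}$ (the potentials are real), so that $\|\psi(t)\|_{L^{2}}=\|\psi_{0}\|_{L^{2}}$ for all $t$.

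First I would record the decay of the forcing term. Since $V_{1},V_{2}$ decay exponentially (or have compact support) and $0$ is not an eigenvalue of $H_{1}$ or $H_{2}$, every bound state $w_{i}$ of $H_{1}$ and $u_{j}$ of $H_{2}$ belongs to a strictly negative eigenvalue, so the Agmon estimate gives $|w_{i}(x)|+|u_{j}(x)|\lesssim e^{-c|x|}$ for some $c>0$. Combining this with the exponential decay of $V_{2}$ and the elementary inequality $|x|+|x-t\vec{e_{1}}|\geq|t|$ (keeping enough decay in $x$ for integrability), one obtains $\|V_{2}(\cdot-t\vec{e_{1}})\,w_{i}\|_{L^{2}}\lesssim e^{-\alpha|t|}$ for some $\alpha>0$, uniformly in $t\in\mathbb{R}$ and $i$, and likewise $\|V_{1}(\cdot+t\vec{e_{1}})\,u_{j}\|_{L^{2}}\lesssim e^{-\alpha|t|}$. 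Since $H_{1}$ has only finitely many bound states, Cauchy--Schwarz and the conservation of the $L^{2}$ norm then give
\begin{equation*}
\|P_{b}(H_{1})\,V_{2}(\cdot-t\vec{e_{1}})\,\psi(t)\|_{L^{2}}\;\lesssim\;e^{-\alpha|t|}\,\|\psi_{0}\|_{L^{2}}.
\end{equation*}

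Next I would apply Duhamel's formula relative to $H_{1}$: for every $s$,
\begin{equation*}
\psi(t)=e^{-i(t-s)H_{1}}\psi(s)-i\int_{s}^{t}e^{-i(t-\tau)H_{1}}\,V_{2}(\cdot-\tau\vec{e_{1}})\,\psi(\tau)\,d\tau .
\end{equation*}
Applying the spectral projection $P_{b}(H_{1})$, which commutes with $e^{-i(t-\tau)H_{1}}$, using that $e^{-i(t-\tau)H_{1}}$ is $L^{2}$-unitary, and inserting the bound just obtained, I arrive at
\begin{equation*}
\|P_{b}(H_{1})\psi(t)\|_{L^{2}}\;\leq\;\|P_{b}(H_{1})\psi(s)\|_{L^{2}}+C\,\|\psi_{0}\|_{L^{2}}\left|\int_{s}^{t}e^{-\alpha|\tau|}\,d\tau\right| .
\end{equation*}
For $t\geq0$ I would let $s\to+\infty$: the boundary term vanishes by the asymptotic orthogonality hypothesis \eqref{eq:117}, while $\int_{t}^{\infty}e^{-\alpha\tau}\,d\tau\lesssim e^{-\alpha t}$; for $t\leq0$ I would let $s\to-\infty$ and use $\int_{-\infty}^{t}e^{\alpha\tau}\,d\tau\lesssim e^{-\alpha|t|}$. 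Either way $\|P_{b}(H_{1})\psi(t)\|_{L^{2}}\lesssim e^{-\alpha|t|}\|\psi_{0}\|_{L^{2}}$ for all $t\in\mathbb{R}$. For the $P_{b}(H_{2},t)$ term I would pass to the frame comoving with $V_{2}$: the Galilei transformation from Definition~\ref{SP} is an $L^{2}$-isometry that conjugates $P_{b}(H_{2},t)$ to the stationary projection $P_{b}(H_{2})$ and, by \eqref{eq:112}--\eqref{eq:115}, conjugates \eqref{eq:1111} into a charge transfer equation in which $V_{2}$ is at rest and $V_{1}$ is replaced by a constant-velocity translate; this reduces the matter to the two steps above with the roles of $H_{1}$ and $H_{2}$ exchanged and yields $\|P_{b}(H_{2},t)\psi(t)\|_{L^{2}}\lesssim e^{-\alpha|t|}\|\psi_{0}\|_{L^{2}}$. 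Adding the two estimates proves \eqref{eq:118}.

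The conceptual core above --- exponential spatial separation of the travelling bump from the stationary bound states, together with Duhamel and a boundary term that is killed by \eqref{eq:117} --- is straightforward once set up; the point I expect to need the most care is making the Duhamel identity and the interchange of the limit $s\to\pm\infty$ with the time integral rigorous when $\psi_{0}$ lies only in $L^{2}$. I would deal with this by first taking $\psi_{0}$ Schwartz, where $\psi(t)$ is a classical solution and all manipulations are manifestly legitimate, noting that every constant above depends only on $V_{1},V_{2}$ and on $\|\psi_{0}\|_{L^{2}}$, and then passing to the limit using the density of Schwartz functions in $L^{2}$ and the uniform $L^{2}$ bound on $U(t,0)$; the convergence of $\int_{s}^{t}$ as $s\to\pm\infty$ is automatic since the integrand is $O(e^{-\alpha|\tau|})$, hence absolutely integrable over $\mathbb{R}$.
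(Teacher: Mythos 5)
Your proof is correct and takes the same approach the paper (and \cite{RSS}) uses: a Duhamel expansion relative to the stationary Hamiltonian $H_1$, Agmon's exponential decay of the bound states combined with the exponential separation of the travelling $V_2(\cdot-t\vec{e_1})$ to bound the interaction term, a boundary term at $s\to\pm\infty$ that is annihilated by the asymptotic-orthogonality hypothesis \eqref{eq:117}, and a Galilei conjugation to reduce the $P_b(H_2,t)$ piece to the stationary case. The paper itself quotes this proposition from \cite{RSS} without reproducing the argument, but your sketch coincides with the Duhamel--Agmon technique the paper employs in Proposition~\ref{prop:convrate}.
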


As pointed out above, $\forall\psi_{0}\in L^{2}$ such that the asymptotically
orthogonal condition \ref{eq:117} are satisfied, they form a subspace
$H_{s}(0)\mbox{\ensuremath{\subset}}L^{2}$. We can do a more general
time-dependent construction. Denote the evolution starting from $\tau$
to $t$ by $U(t,\tau)$. Similar as our original construction there
is a subspace $H_{s}(\tau)\subset L^{2}$ such that for $\psi\in H_{s}(s)$,
\[
\left\Vert P_{b}\left(H_{1}\right)U(t,\tau)\psi\right\Vert _{L^{2}}+\left\Vert P_{b}\left(H_{2},t\right)U(t,\tau)\psi\right\Vert _{L^{2}}\rightarrow0.
\]
Similarly as above, we can also obtain a decay rate that for some
$\alpha>0$, 
\[
\left\Vert P_{b}\left(H_{1}\right)U(t,\tau)\psi\right\Vert _{L^{2}}+\left\Vert P_{b}\left(H_{2},t\right)U(t,\tau)\psi\right\Vert _{L^{2}}\lesssim e^{-\alpha\left(t-\tau\right)}\left\Vert \psi\right\Vert _{L^{2}},\,\psi\in H_{s}(s).
\]
It is crucial to notice an important property of $H_{s}\mbox{(\ensuremath{\tau})}$. 
\begin{lem}
	\label{lem:projS}Denote $P_{s}(\tau)$ to be the projection onto
	$H_{s}(\tau)$. Then far arbitrary $s,\,\tau\in\mathbb{R}$, 
	\begin{equation}
	P_{s}(s)U(s,\tau)=U(s,\tau)P_{s}(\tau).\label{eq:commute}
	\end{equation}
\end{lem}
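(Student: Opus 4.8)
The plan is to deduce the intertwining identity \eqref{eq:commute} directly from the definition of the scattering spaces $H_s(\tau)$ together with the group-like composition law for the propagators, $U(t,s)U(s,\tau)=U(t,\tau)$. First I would observe that $P_s(\tau)$ is, by Definition \ref{SP}, the orthogonal projection onto the closed subspace $H_s(\tau)$ consisting of those $\phi$ for which $\|P_b(H_1)U(t,\tau)\phi\|_{L^2}+\|P_b(H_2,t)U(t,\tau)\phi\|_{L^2}\to 0$ as $t\to\pm\infty$. So the heart of the matter is the \emph{set-theoretic} identity
\begin{equation}
U(s,\tau)\,H_s(\tau)=H_s(s),\label{eq:setlevel}
\end{equation}
i.e. that the propagator maps scattering states at time $\tau$ bijectively onto scattering states at time $s$; once \eqref{eq:setlevel} is in hand, the statement about projections is a soft fact about Hilbert spaces, provided $U(s,\tau)$ is unitary on $L^2$ (which it is, being the evolution of a Schr\"odinger equation with real potentials).

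For \eqref{eq:setlevel}, suppose $\phi\in H_s(\tau)$ and set $\psi:=U(s,\tau)\phi$. For any $t$ we have $U(t,s)\psi=U(t,s)U(s,\tau)\phi=U(t,\tau)\phi$, so the quantity $\|P_b(H_1)U(t,s)\psi\|_{L^2}+\|P_b(H_2,t)U(t,s)\psi\|_{L^2}$ is literally the same function of $t$ as the one defining membership of $\phi$ in $H_s(\tau)$; hence it tends to $0$ as $t\to\pm\infty$, i.e. $\psi\in H_s(s)$. This shows $U(s,\tau)H_s(\tau)\subseteq H_s(s)$, and running the argument with the roles of $s,\tau$ exchanged (using $U(\tau,s)=U(s,\tau)^{-1}$) gives the reverse inclusion, hence \eqref{eq:setlevel}. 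The one genuine subtlety here is to make sure the composition law $U(t,s)U(s,\tau)=U(t,\tau)$ is available for a \emph{time-dependent} Hamiltonian; this holds because the propagator is defined through the well-posed Cauchy problem for \eqref{eq:1111}, and uniqueness of solutions forces the two-parameter cocycle identity. I regard this — pinning down well-posedness and the cocycle property for the charge transfer evolution — as the only real obstacle, and it is one that the references \cite{RSS,Cai} already handle.

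Finally I would close the argument at the level of projections. Let $V$ be any unitary operator on a Hilbert space mapping a closed subspace $K$ onto a closed subspace $K'$; then $VP_K = P_{K'}V$, because for $x=x_1+x_2$ with $x_1\in K$, $x_2\in K^\perp$ we have $Vx_1\in K'$ and $Vx_2\in (K')^\perp$ (unitaries preserve orthogonal complements of invariant-image subspaces), so $P_{K'}Vx = Vx_1 = VP_K x$. Applying this with $V=U(s,\tau)$, $K=H_s(\tau)$, $K'=H_s(s)$ — legitimate by \eqref{eq:setlevel} and the unitarity of $U(s,\tau)$ on $L^2$ — yields exactly \eqref{eq:commute}. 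No computation beyond this bookkeeping is needed; the content is entirely in the definition of $H_s(\tau)$ and in the cocycle property of $U$.
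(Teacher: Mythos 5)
Your proof is correct and follows essentially the same route as the paper: both arguments first establish the set-level invariance $U(s,\tau)H_s(\tau)=H_s(s)$ from the cocycle identity $U(t,s)U(s,\tau)=U(t,\tau)$ and the definition of $H_s(\cdot)$, and then pass to the projection identity using unitarity of $U(s,\tau)$. The only difference is cosmetic: you package the last step as an abstract Hilbert-space lemma ($V$ unitary with $VK=K'$ implies $VP_K=P_{K'}V$), while the paper does the same computation inline via the decomposition $\phi=P_s(\tau)\phi+(1-P_s(\tau))\phi$; your version has the small virtue of making explicit that unitarity is what the paper's terse ``Similarly'' is silently using.
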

\begin{proof}
	Notice that for $\psi\in H_{s}(\tau)$, then $U(s,\tau)\psi\in H_{s}(s)$.
	Since 
	\begin{eqnarray*}
		\left\Vert P_{b}\left(H_{1}\right)U(t,s)U(s,\tau)\psi\right\Vert _{L^{2}}+\left\Vert P_{b}\left(H_{2},t\right)U(t,s)U(s,\tau)\psi\right\Vert _{L^{2}}\\
		=\left\Vert P_{b}\left(H_{1}\right)U(t,\tau)\psi\right\Vert _{L^{2}}+\left\Vert P_{b}\left(H_{2},t\right)U(t,\tau)\psi\right\Vert _{L^{2}}\rightarrow0
	\end{eqnarray*}
	as $t\rightarrow\infty$ by the definition of $H_{s}(\tau).$ Then
	again by the definition of $H_{s}(s)$, it is clear $U(s,\tau)\psi\in H_{s}(s)$.
	Conversely, by symmetry, for $\psi\in H_{s}(s)$, then $U(\tau,s)\psi\in H_{s}(\tau)$.
	Therefore, we have the scattering spaces are invariant under the flow
	$U(s,\tau)$, 
	\begin{equation}
	H_{s}(s)=U(s,\tau)H_{s}(\tau).
	\end{equation}
	Let $\phi\in L^{2}$, then $U(s,\tau)P_{s}(\tau)\phi\in H_{s}(s)$
	by construction. Then 
	\begin{eqnarray*}
		U(s,\tau)P_{s}(\tau)\phi & = & \left(1-P_{s}(s)\right)U(s,\tau)P_{s}(\tau)\phi+P_{s}(s)U(s,\tau)P_{s}(\tau)\phi\\
		& = & P_{s}(s)U(s,\tau)P_{s}(\tau)\phi.
	\end{eqnarray*}
	Similarly, 
	\[
	P_{s}(s)U(s,\tau)\phi=P_{s}(s)U(s,\tau)P_{s}(\tau)\phi.
	\]
	Hence 
	\[
	P_{s}(s)U(s,\tau)=U(s,\tau)P_{s}(\tau),
	\]
	as claimed.\end{proof}

If the evolution is asymptotically orthogonal to the bound states, one also have the usual $L^{1}\rightarrow L^{\infty}$ dispersive estimate. 
\begin{thm}[\cite{RSS},\cite{Cai}]\label{thm:dispersive}
	 Consider the charge transfer model as in Definition
	\ref{Charge} with two potentials as above. Assume $\widehat{V_{1}},\,\widehat{V_{2}}\in L^{1}\left(\mathbb{R}^{n}\right)$.
	Then for any initial data $\psi_{0}\in L^{1}\left(\mathbb{R}^{n}\right)$,
	which is asymptotically orthogonal to the bound states of $H_{1}$
	and $H_{2}$ in the sense of Definition \ref{SP}, one has the decay
	estimate 
	\begin{equation}
	\left\Vert U(t,0)\psi_{0}\right\Vert _{L^{\infty}}\lesssim\left|t\right|^{-\frac{n}{2}}\left\Vert \psi_{0}\right\Vert _{L^{1}}.\label{eq:119}
	\end{equation}
	A similar estimate holds for any number of potentials.
\end{thm}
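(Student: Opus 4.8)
The plan is to follow the bootstrap argument of Rodnianski--Schlag--Soffer \cite{RSS}, as refined by Cai \cite{Cai}. Fix $T>0$, assume first $\psi_0\in L^1\cap L^2$, and set
\[
M(T):=\sup_{0<t\le T}|t|^{\frac n2}\,\frac{\|U(t,0)\psi_0\|_{L^\infty}}{\|\psi_0\|_{L^1}}.
\]
By a perturbative Duhamel argument on bounded time intervals (where the potentials are bounded) together with the free estimate \eqref{eq:13}, $M(T)<\infty$ and $T\mapsto M(T)$ is continuous, so it suffices to bound $M(T)$ independently of $T$; a density argument then extends the estimate from $L^1\cap L^2$ to all of $L^1$ and yields \eqref{eq:119}. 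Alongside $M$ one must also carry suitable weighted $L^2$ quantities $\|\langle x\rangle^{-\sigma}U(t,0)\psi_0\|_{L^2}$ with $\sigma>\tfrac12$, together with their analogues with the weight centered at the moving point $t\vec{e_1}$; choosing $\sigma>\tfrac n2$ these are in turn dominated by $M(T)$ for $t\ge 1$, so the bootstrap ultimately closes on $M$ alone.

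The single-channel inputs are as follows. Since $\widehat{V_j}\in L^1$ and $0$ is neither an eigenvalue nor a resonance of $H_j=-\tfrac12\Delta+V_j$, the stationary theory \cite{JSS,Ya} gives the dispersive bound $\|e^{-itH_j}P_c(H_j)f\|_{L^\infty}\lesssim|t|^{-\frac n2}\|f\|_{L^1}$ and the weighted local-decay bound $\|\langle x\rangle^{-\sigma}e^{-itH_j}P_c(H_j)f\|_{L^2}\lesssim\langle t\rangle^{-\frac n2}\|\langle x\rangle^{\sigma}f\|_{L^2}$. The Galilei transformation $\mathfrak g_{\vec{e_1}}(t)$ is an $L^p$-isometry that intertwines the free evolution, \eqref{eq:112}, so conjugating by it transports both bounds to the channel generated by $-\tfrac12\Delta+V_2(\cdot-t\vec{e_1})$, with weights centered at $t\vec{e_1}$.

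The heart of the proof is a (nested) Duhamel expansion relative to the two channels. Writing $U(t,0)\psi_0=e^{-itH_1}\psi_0-i\int_0^t e^{-i(t-s)H_1}V_2(\cdot-s\vec{e_1})\,U(s,0)\psi_0\,ds$, splitting $e^{-i\tau H_1}=e^{-i\tau H_1}P_c(H_1)+e^{-i\tau H_1}P_b(H_1)$, and running the symmetric expansion relative to the moving channel (in the Galilei frame), one estimates the $P_c$ terms using the dispersive and weighted bounds above, splitting each time integral at $s=t/2$ so that the required $|t|^{-n/2}$ is supplied by the propagator on $\{s\le t/2\}$ and by $M(t/2)$ on $\{s\ge t/2\}$. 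The decisive structural fact is that the supports of $V_1$ and $V_2(\cdot-s\vec{e_1})$ separate linearly in $s$: hence, restricted near one potential, the contribution of the other potential and of its (exponentially localized) eigenfunctions comes with a factor that decays exponentially in $s$, which both makes the relevant $s$-integrals converge and keeps the coupling between the channels small. This produces a closed bound $M(T)\le C_0+\varepsilon\,M(T)$ with $\varepsilon$ as small as one wishes, hence $M(T)\le 2C_0$ uniformly in $T$.

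It remains to dispose of the bound-state contributions $e^{-i\tau H_1}P_b(H_1)(\,\cdot\,)$ and their moving analogues; these are exactly where the standing hypothesis of asymptotic orthogonality is used. Proposition~\ref{prop:(RSS Prop 3.1)} (and the $U(t,\tau)$-version recorded just after it) gives $\|P_b(H_1)U(s,0)\psi_0\|_{L^2}+\|P_b(H_2,s)U(s,0)\psi_0\|_{L^2}\lesssim e^{-\alpha s}\|\psi_0\|_{L^2}$, and since $w_i,u_j$ are exponentially localized, projecting onto them maps $L^2$ into $L^\infty$; combined with the exponential factor and Lemma~\ref{lem:projS} (to restart the evolution at intermediate times), these terms feed into the small contributions above. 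Assembling everything and invoking the continuity of $M$ in $T$ closes the bootstrap and yields \eqref{eq:119}; the case of more than two potentials is handled identically once the channels are ordered by pairwise separation speed. I expect the main difficulty to be precisely this Duhamel bookkeeping: the naive estimate $\int_0^t|t-s|^{-n/2}s^{-n/2}\,ds$ diverges for $n=3$, so one cannot bootstrap $M(T)$ directly and must instead run the weighted $L^2$ local-decay estimates --- adapted to both the stationary and the moving frame --- in tandem, carefully exploiting where the dispersing solution sits relative to each potential.
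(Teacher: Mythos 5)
The paper does not prove Theorem~\ref{thm:dispersive}; it is quoted from \cite{RSS} and \cite{Cai} as a preliminary input, so there is no in-text proof to compare against. The closest the paper comes to showing this machinery is the bootstrap proof of Lemmas~\ref{lem:We1} and~\ref{lem:We2} in Section~\ref{sec:Strich}, which uses the same toolbox. Your sketch correctly captures the skeleton of the RSS argument: a bootstrap quantity carried alongside weighted local-decay norms, Duhamel expansion interleaved between the two channels, Galilei conjugation to transport the stationary theory to the moving channel, exponential decay of the $P_b$ contributions via asymptotic orthogonality and Lemma~\ref{lem:projS}, and separation of the supports of $V_1$ and $V_2(\cdot-s\vec e_1)$. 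You also correctly diagnose the central obstruction: for $n=3$ the convolution $\int_0^t|t-s|^{-3/2}|s|^{-3/2}\,ds$ diverges at both endpoints, so a $t/2$-split with the $L^1\to L^\infty$ bound cannot close the bootstrap on $M(T)$ alone.

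But this is exactly where your proposal stops. You write that one ``must instead run the weighted $L^2$ local-decay estimates in tandem,'' yet you do not supply the mechanism that actually makes the bootstrap close, and the claimed inequality $M(T)\le C_0+\varepsilon M(T)$ is asserted, not derived. In RSS — and visibly in the paper's own proof of Lemmas~\ref{lem:We1}--\ref{lem:We2} — the endpoint singularities are not handled by a $t/2$-split but by the three-piece splitting $[t_0,t_0+A]\cup[t_0+A,t-A]\cup[t-A,t]$ with a large parameter $A$: the middle interval contributes a factor $\sim A^{-1/2}$, while the near-endpoint piece $[t-A,t]$ requires a second, nested Duhamel expansion with its own cutoff $B$, a low/high momentum decomposition $F(|\vec p|\lessgtr M)$, the nonstationary-phase estimate exploiting the linear separation of the potential supports for the low-momentum part, and Kato's smoothing estimate (Lemma~\ref{lem:kato}) for the high-momentum part. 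None of these ingredients appear in your outline. As a statement of the RSS strategy your plan is faithful, but as a proof it leaves a genuine gap at precisely the step you yourself identify as the crux, and the gain-of-smallness that closes the bootstrap is not produced.

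A minor point: asymptotic orthogonality is an $L^2$-statement, so the theorem really requires $\psi_0\in L^1\cap L^2$; the density argument you invoke to pass to general $L^1$ data must be phrased so as not to leave the admissible class.
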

Note that since the potentials depend on time, Strichartz estimates do not follow from the dispersive estimate and $TT^{*}$ argument.

With the decay estimate \eqref{eq:119}, we obtain the asymptotic completeness
of the charge transfer Hamiltonian:
\begin{thm}[\cite{RSS,Cai}]
	\label{thm:ac} Let $w_{1},\,\ldots,\, w_{m}$ and
	$u_{1},\,\ldots,\, u_{\ell}$ be the normalized bound states of $H_{1}$
	and $H_{2}$ associated to the negative eigenvalues $\lambda_{1},\,\ldots,\,\lambda_{m}$
	and $\mu_{1},\,\ldots,\,\mu_{\ell}$. Then for any initial data $\psi_{0}\in L^{2}\left(\mathbb{R}^{n}\right)$,
	the solution $\psi(t,x)=U(t,0)\psi_{0}$ of the charge transfer model,
	equation \eqref{eq:1111}, can be written in the form 
	\begin{equation}
	\psi(t,x)=U(t,0)\psi_{0}=\sum_{r=1}^{m}A_{r}e^{-i\lambda_{r}t}w_{r}+\sum_{s=1}^{\ell}B_{s}e^{-i\mu_{s}t}\mathfrak{g}_{-\overrightarrow{e_{1}}}(t)u_{s}+e^{-it\frac{\Delta}{2}}\phi_{0}+R(t)\label{eq:120}
	\end{equation}
	with some choice of the constants $A_{r},\, B_{s}$ and the function
	$\phi_{0}$. The remainder term $R(t)$ satisfies the estimate,
	\begin{equation}
	\left\Vert R(t)\right\Vert _{L^{2}}\rightarrow0,\,\, t\rightarrow\pm\infty.\label{eq:121}
	\end{equation}
	
\end{thm}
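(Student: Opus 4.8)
The plan is to prove Theorem~\ref{thm:ac} (asymptotic completeness) by combining the decay estimate \eqref{eq:119} of Theorem~\ref{thm:dispersive} with the RAGE-type/Cook's method argument originally due to \cite{Graf} and adapted in \cite{RSS,Cai}. First I would decompose the evolution at a large time $T$ into its components along the bound states of $H_1$, the (Galilei-transported) bound states of $H_2$, and the remaining piece. For fixed $\psi_0\in L^2$, one studies the orbit $U(t,0)\psi_0$ and, using Lemma~\ref{lem:projS} together with Proposition~\ref{prop:(RSS Prop 3.1)}, isolates the ``scattering'' part $P_s(0)\psi_0$, whose evolution is asymptotically orthogonal to all bound states and thus satisfies the dispersive estimate \eqref{eq:119}. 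The non-scattering part must be shown to converge, as $t\to\pm\infty$, to a finite linear combination of the oscillating bound states $e^{-i\lambda_r t}w_r$ and $e^{-i\mu_s t}\mathfrak{g}_{-\vec{e_1}}(t)u_s$; this is where the non-parallel-velocity hypothesis enters, via the fact that $\langle e^{-itH_1}w_r, \mathfrak{g}_{-\vec{e_1}}(t)u_s\rangle \to 0$ (the two clusters separate in physical space at linear rate), so the two families of bound states are asymptotically orthogonal to each other.

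The key steps, in order, would be: (i) establish that the coefficients $A_r(t) := \langle U(t,0)\psi_0, e^{-i\lambda_r t}w_r\rangle$ and $B_s(t) := \langle U(t,0)\psi_0, e^{-i\mu_s t}\mathfrak{g}_{-\vec{e_1}}(t)u_s\rangle$ have limits as $t\to+\infty$ --- one differentiates in $t$ and checks that the derivative is integrable, using that $V_2(\cdot - t\vec{e_1})$ acting near the support of $w_r$ (which stays put) produces an exponentially small factor, and symmetrically for $V_1$ acting near the moving cluster; (ii) set $A_r, B_s$ to be these limits, and show the residual $\rho(t) := U(t,0)\psi_0 - \sum A_r e^{-i\lambda_r t}w_r - \sum B_s e^{-i\mu_s t}\mathfrak{g}_{-\vec{e_1}}(t)u_s$ satisfies $\|P_b(H_1)\rho(t)\|_{L^2} + \|P_b(H_2,t)\rho(t)\|_{L^2}\to 0$, i.e. $\rho(t)$ is asymptotically a scattering state; (iii) since $\rho(t)$ is (up to an exponentially small error absorbed into $R(t)$) in the scattering space, invoke the dispersive estimate \eqref{eq:119} and a standard Cook/Duhamel argument to show $e^{it\Delta/2}\rho(t)$ converges in $L^2$ to some $\phi_0$, giving $\rho(t) = e^{-it\Delta/2}\phi_0 + R(t)$ with $\|R(t)\|_{L^2}\to 0$; (iv) collect terms to obtain \eqref{eq:120}--\eqref{eq:121}.

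For step (iii), the Cook argument runs as follows: write $U(t,0)\psi_0 = e^{-it H_1} P_c(H_1)(\text{stuff}) + \dots$ or more directly use Duhamel on \eqref{eq:1111} against the free flow, $\partial_t(e^{it\Delta/2} U(t,0)\psi_0) = e^{it\Delta/2}(-iV_1 - iV_2(\cdot-t\vec{e_1}))U(t,0)\psi_0$, and bound the $L^2$ norm of the right side by $\|V_j\|_{L^{3/2}}$ times $\|U(t,0)\psi_0\|_{L^3}$ — but $L^3$ control requires the dispersive/Strichartz decay, which for the scattering component comes from \eqref{eq:119} interpolated with $L^2$ conservation, giving a $t^{-1/2}$ factor that is unfortunately not integrable. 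This forces the more careful local-decay version used in \cite{RSS}: one exploits that $V_j$ is exponentially localized and that the ``bad'' part of the evolution has already been subtracted, so one really needs local $L^2$ decay $\|\langle x\rangle^{-\sigma} U(t,0) P_s \psi_0\|_{L^2} \in L^1_t$, which follows from the dispersive estimate plus the exponential separation of the two scattering channels. I expect step (ii), namely showing the residual is genuinely asymptotically orthogonal to \emph{both} bound-state families simultaneously (handling cross terms between the stationary and moving clusters and confirming no ``leakage'' between channels), to be the main obstacle; it is precisely the place where Graf's phase-space partition or the propagation-estimate machinery of \cite{RSS} is doing the real work, and since Theorem~\ref{thm:ac} is quoted from \cite{RSS,Cai} I would present this part by citing those references for the detailed channel-separation estimates rather than reproving them.
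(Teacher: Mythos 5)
The paper does not prove Theorem~\ref{thm:ac}: it is quoted verbatim from \cite{RSS,Cai} and then simply used (e.g.\ in the proof of Theorem~\ref{thm:energy}). So there is no in-paper proof to compare against, and your decision to defer the channel-separation machinery to those references is consistent with what the paper itself does.

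On the merits of your sketch: the broad strategy (Duhamel/Cook's method for the bound-state coefficients, driven by the dispersive estimate \eqref{eq:119} for the scattering remainder) is indeed the RSS route. Two details need repair. First, in step (iii) your H\"older split $\|V_j\|_{L^{3/2}}\|U(t,0)\psi_0\|_{L^3}\sim t^{-1/2}$ is a suboptimal pairing; the usable one is $\|V_j\|_{L^2}\|U(t,0)\psi_0\|_{L^\infty}\lesssim |t|^{-3/2}\|\psi_0\|_{L^1}$, which \emph{is} integrable. This forces you to first run the Cook argument on the dense subspace $L^1\cap L^2$ (intersected with the scattering space) and then extend to all of $L^2$ by unitarity of $U(t,0)$ — a density step you do not mention, but which is essential since the $L^\infty$ bound has no meaning for general $L^2$ data. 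Second, your fallback ``local decay in $L^1_t$'' is not directly supplied by the paper's Lemma~\ref{lem:We2}, which only gives an $L^2_t$ bound on $\|\langle x-x(t)\rangle^{-\sigma}U(t,t_0)P_s(t_0)u\|_{L^2_x}$; an $L^2_t$ estimate does not by itself close a Cook integral, so if you go that route you would still need to interpolate against pointwise decay or argue differently. Your flag in step (ii) — that the residual must be shown orthogonal to \emph{both} families simultaneously and that cross terms between the stationary and moving clusters are the crux — is the right thing to worry about and is where the non-parallel-velocity hypothesis does its work in \cite{RSS,Graf}.
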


With the asymptotic completeness of the charge transfer Hamiltonian,
we can construct a time-dependent decomposition of $L^{2}$ with scattering
states and analogous of bound states associated with $H_{1}$ and
$H_{2}$. The construction should be similar as \cite{Graf,Ya1}.
Following the notations in \cite{RSS,Graf}, with the proof of Theorem
\ref{thm:ac}, we know the existence of the following wave operators
in $L^{2}$: for $s\in\mathbb{R}$, 

\begin{equation}
\Omega_{0}^{-}(s)=\lim_{t\rightarrow\infty}U(s,t)e^{-iH_{0}(t-s)}
\end{equation}
\begin{equation}
\Omega_{1}^{-}(s)=\lim_{t\rightarrow\infty}U(s,t)e^{-iH_{1}(t-s)}P_{b}\left(H_{1}\right)
\end{equation}
\begin{equation}
\Omega_{2}^{-}(s)=\lim_{t\rightarrow\infty}U(s,t)\mathit{\mathfrak{g}}_{\vec{e_{1}}}(t)^{-1}e^{-iH_{2}(t-s)}P_{b}\left(H_{2}\right)\mathit{\mathfrak{g}}_{\vec{e_{1}}}(t)
\end{equation}
where limits are taken as strong operator topology. From
\cite{RSS}, the ranges of the above operators has the following relation:

\begin{equation}
L^{2}=\mathrm{Ran}\Omega_{0}^{-}(s)\oplus\mathrm{Ran}\Omega_{1}^{-}(s)\oplus\mathrm{Ran}\Omega_{2}^{-}(s).
\end{equation}
Naturally the above constructions will introduce a time-dependent
decomposition of $L^{2}$ and one can observe that 
\[
\mathrm{Ran}\Omega_{0}^{-}(\tau)=H_{s}(\tau).
\]
We introduce projections $P_{i}(\tau)$ to the projection onto the
range of $\Omega_{i}^{-}(\tau)$, $i=1,2$. Clearly. $\mathrm{Ran}\Omega_{1}^{-}(\tau)$
and $\mathrm{Ran}\Omega_{2}^{-}(\tau)$ are analogous as the spans
bound states associated with $H_{1}$ and $H_{2}$ respectively.
Notice that by construction, one can find a basis for $\Omega_{i}^{-}(\tau)$,
$i=1,2$. With our notations above, $w_{1},\,\ldots,\,w_{m}$ and
$u_{1},\,\ldots,\,u_{\ell}$be the normalized bound states of $H_{1}$
and $H_{2}$ associated to the negative eigenvalues $\lambda_{1},\,\ldots,\,\lambda_{m}$
and $\mu_{1},\,\ldots,\,\mu_{\ell}$ respectively. Then $\left\{ w_{i,\tau}=\Omega_{1}^{-}(\tau)w_{i}\right\} _{i=1}^{m}$
is a basis for $\mathrm{Ran}\Omega_{1}^{-}(\tau)$. Similarly, $\left\{ u_{j,\tau}=\Omega_{2}^{-}(\tau)\mathit{\mathfrak{g}}_{\vec{e_{1}}}(\tau)u_{j}\right\} _{i=1}^{m}$
is a basis for $\mathrm{Ran}\Omega_{2}^{-}(\tau)$. By asymptotic
completeness and intuition, as $\tau\rightarrow\infty$, $\Omega_{1}^{-}(\tau)w_{i}\rightarrow w_{i}$
and $\Omega_{2}^{-}(\tau)\mathit{\mathfrak{g}}(\tau)u_{j}\rightarrow\mathit{\mathfrak{g}}_{\vec{e_{1}}}(\tau)^{-1}u_{j}$.
Actually, following \cite{RSS}, one can actually extract a convergent
rate. We focus on$\Omega_{1}^{-}(\tau)w_{i}\rightarrow w_{i}$ and
for the other case, we just need to apply the same argument after applying
a Galilei transformation.
\begin{prop}
	\label{prop:convrate}For some $\alpha>0$ , 
	\begin{equation}
	\left\Vert \Omega_{1}^{-}(\tau)w_{i}-w_{i}\right\Vert _{L^{2}}\lesssim e^{-\alpha\tau}.\label{eq:convrate}
	\end{equation}
\end{prop}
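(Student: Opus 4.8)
The plan is to prove \eqref{eq:convrate} by Cook's method (a Duhamel argument). Since $w_i$ is a normalized eigenfunction of $H_1$ with $H_1 w_i=\lambda_i w_i$, we have $e^{-iH_1(t-\tau)}w_i=e^{-i\lambda_i(t-\tau)}w_i$, so by the definition of the wave operator,
\[
\Omega_1^-(\tau)w_i=\lim_{t\to\infty}U(\tau,t)\,e^{-i\lambda_i(t-\tau)}w_i .
\]
Set $g(t):=U(\tau,t)\,e^{-i\lambda_i(t-\tau)}w_i$ for $t\ge\tau$; then $g(\tau)=w_i$ and we must estimate $\|\lim_{t\to\infty}g(t)-g(\tau)\|_{L^2}$. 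It suffices to treat $\tau$ large, since for $\tau$ in a bounded set the bound is automatic: $g(t)$ is a unitary image of $w_i$ up to a phase, so $\|\Omega_1^-(\tau)w_i\|_{L^2}=\|w_i\|_{L^2}=1$ and therefore $\|\Omega_1^-(\tau)w_i-w_i\|_{L^2}\le 2$.

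Next I would differentiate $g$. Recall that the eigenfunction $w_i$ is smooth and decays exponentially --- a standard Agmon/Combes--Thomas estimate for $-\tfrac12\Delta+V_1$ at the negative eigenvalue $\lambda_i$, using the exponential decay of $V_1$ from Definition \ref{Charge} --- so $w_i$ lies in the domain of $H(t)$ for every $t$. Using $\partial_t U(\tau,t)=iU(\tau,t)H(t)$ together with $H(t)w_i=H_1 w_i+V_2(\cdot-t\vec{e_1})w_i=\lambda_i w_i+V_2(\cdot-t\vec{e_1})w_i$, we obtain
\[
\frac{d}{dt}g(t)=i\,e^{-i\lambda_i(t-\tau)}\,U(\tau,t)\bigl(H(t)-\lambda_i\bigr)w_i=i\,e^{-i\lambda_i(t-\tau)}\,U(\tau,t)\,V_2(\cdot-t\vec{e_1})w_i .
\]
Since $U(\tau,t)$ is unitary on $L^2$ and the scalar factor has modulus one, $\bigl\|\tfrac{d}{dt}g(t)\bigr\|_{L^2}=\bigl\|V_2(\cdot-t\vec{e_1})w_i\bigr\|_{L^2}$. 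By the fundamental theorem of calculus in the Bochner sense (justified by the strong $C^1$-dependence of $t\mapsto U(\tau,t)w_i$, which comes from the well-posedness theory for the charge transfer propagator and $w_i$ being in the domain), $\Omega_1^-(\tau)w_i-w_i=\int_\tau^\infty\tfrac{d}{dt}g(t)\,dt$, hence
\[
\|\Omega_1^-(\tau)w_i-w_i\|_{L^2}\le\int_\tau^\infty\bigl\|V_2(\cdot-t\vec{e_1})w_i\bigr\|_{L^2}\,dt .
\]

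Finally I would estimate this integral using the geometric separation: $V_2$ is transported with unit speed along $\vec{e_1}$ while $w_i$ stays concentrated near the origin, so their overlap decays exponentially in $t$. Concretely, with $|V_2(y)|\lesssim e^{-c|y|}$ and $|w_i(x)|\lesssim e^{-c_i|x|}$, I split $\mathbb{R}^3=\{|x|\le t/2\}\cup\{|x|>t/2\}$: on the first region $|x-t\vec{e_1}|\ge t/2$, so $|V_2(x-t\vec{e_1})|\lesssim e^{-ct/2}$ and the $w_i$-factor is integrable; on the second region $|w_i(x)|\lesssim e^{-c_it/2}$ and the $V_2$-factor is integrable. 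This yields $\|V_2(\cdot-t\vec{e_1})w_i\|_{L^2}\lesssim e^{-\delta t}$ for some $\delta>0$ and all $t\ge 1$, whence $\|\Omega_1^-(\tau)w_i-w_i\|_{L^2}\lesssim\int_\tau^\infty e^{-\delta t}\,dt\lesssim e^{-\delta\tau}$, which is \eqref{eq:convrate} with $\alpha=\delta$. The companion bound for $\Omega_2^-(\tau)\mathfrak{g}_{\vec{e_1}}(\tau)u_j$ follows from the identical computation after conjugating by the Galilei transformation $\mathfrak{g}_{\vec{e_1}}(t)$, which is an $L^2$-isometry, as indicated in the text. The only genuinely delicate points are the justification of the Bochner fundamental theorem of calculus (regularity of $w_i$ and the mapping properties of $U(\tau,t)$) and the elementary overlap estimate of the last paragraph; I do not expect either to be a real obstacle, the argument being morally the same as the exponential-decay estimate \eqref{eq:118}.
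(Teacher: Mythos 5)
Your argument is the same as the paper's: both apply Duhamel (Cook's method) to $t\mapsto U(\tau,t)e^{-i\lambda_i(t-\tau)}w_i$, reduce to $\int_\tau^\infty\|V_2(\cdot-t\vec{e_1})w_i\|_{L^2}\,dt$ by unitarity of $U$, and then invoke the exponential decay of $V_2$ and of the eigenfunction $w_i$ (Agmon) to close the estimate. The only difference is one of exposition: you spell out the overlap computation that the paper summarizes as ``by Agmon's estimate,'' and you correctly write $U(s,r)$ under the integral where the paper has a typographical slip ($U(s,t)$).
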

\begin{proof}
	For $1\leq i\leq m$, with Duhamel's formula 
	\[
	U(s,t)e^{-iH_{1}(t-s)}w_{i}=w_{i}+i\int_{s}^{t}U(s,t)V_{2}\left(\cdot-\vec{e_{1}}\tau\right)e^{-i\lambda_{i}(\tau-s)}w_{i}\,d\tau.
	\]
	It suffices to estimate the $L^{2}$ norm of 
	\begin{equation}
	\int_{s}^{\infty}U(s,t)V_{2}\left(\cdot-\vec{e_{1}}\tau\right)e^{-i\lambda_{i}(\tau-s)}w_{i}\,d\tau.
	\end{equation}
	By Agmon's estimate, 
	\begin{eqnarray}
	\left\Vert \int_{s}^{\infty}U(s,t)V_{2}\left(\cdot-\vec{e_{1}}\tau\right)e^{-i\lambda_{i}(\tau-s)}w_{i}\,d\tau\right\Vert _{L^{2}} & \lesssim & \left\Vert \int_{s}^{\infty}V_{2}\left(\cdot-\vec{e_{1}}\tau\right)w_{i}\,d\tau\right\Vert _{L^{2}}\nonumber \\
	& \lesssim & e^{-\alpha s}.
	\end{eqnarray}
	Therefore 
	\[
	\left\Vert \Omega_{1}^{-}(\tau)w-w_{i}\right\Vert _{L^{2}}\lesssim e^{-\alpha\tau},
	\]
	as claimed.
\end{proof}
\section{Strichartz Estimates}\label{sec:Strich}

In this section, we prove Strichartz estimates for charge transfer
models. The ideas will be based on methods in \cite{CM,RSS}. Certainly, we need to project away from the bound states of $H_{1}$
and the moving bound states associated to $H_{2}(t)$. We will
show certain weighted estimates for the evolution of states in the scattering
space defined in \cite{RSS} and in the sense of Definition \ref{SP}.

Now we formulate the following two estimates when our initial
state is in the scattering space. The first one is:
\begin{lem}
\label{lem:We1}For $\sigma>\frac{3}{2}$ and $t\geq t_{0}$

\begin{equation}
\left\Vert \langle x-x_{0}\rangle^{-\sigma}U(t,t_{0})P_{s}(t_{0})\langle x-x_{1}\rangle^{-\sigma}\right\Vert _{2\rightarrow2}\leq C\frac{1}{\langle t-t_{0}\rangle^{\frac{3}{2}}}\label{eq:we1}
\end{equation}
for all $x_{0}$ and $x_{1}$. 
\end{lem}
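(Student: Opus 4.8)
The plan is to reduce the charge-transfer estimate to the single-potential weighted dispersive estimates via the Galilei transformation, and then to patch the two pieces together using the exponential decay furnished by Proposition \ref{prop:(RSS Prop 3.1)}. First I would record the single-center input: for $H_1=-\tfrac12\Delta+V_1$ with $0$ neither an eigenvalue nor a resonance, the local decay estimate
\[
\left\Vert \langle x-x_0\rangle^{-\sigma} e^{-itH_1}P_c(H_1)\langle x-x_1\rangle^{-\sigma}\right\Vert_{2\to2}\lesssim \langle t\rangle^{-3/2}
\]
holds for $\sigma>3/2$, uniformly in $x_0,x_1$ (this follows from the dispersive estimate for $e^{-itH_1}P_c(H_1)$ interpolated with the $L^2$ bound and the fact that $\langle x-x_0\rangle^{-\sigma}\in L^2\cap L^\infty$ with norms independent of $x_0$, taking $\sigma>3/2$ so $\langle\cdot\rangle^{-\sigma}\in L^1$ as well). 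By \eqref{eq:114}--\eqref{eq:115} and the $L^p$-isometry property of $\mathfrak{g}_{\vec e_1}(t)$, the same estimate holds for the evolution generated by $H_2(\cdot-t\vec e_1)$, i.e.\ for $\mathfrak{g}_{\vec e_1}(t)^{-1}e^{-itH_2}P_c(H_2)\mathfrak{g}_{\vec e_1}(t_0)\cdots\mathfrak{g}_{\vec e_1}(t_0)^{-1}$ conjugated back, with the shift in the weight absorbed into the arbitrary $x_0,x_1$.

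Next I would set up the bootstrap. Write $\psi(t)=U(t,t_0)P_s(t_0)f$ with $f=\langle x-x_1\rangle^{-\sigma}g$, $\|g\|_2=1$, and let $M(t):=\|\langle x-x_0\rangle^{-\sigma}\psi(t)\|_2$. Using Duhamel against the free flow does not see the potentials; instead I would Duhamel against, say, $e^{-i(t-t_0)H_1}$, writing
\[
\psi(t)=e^{-i(t-t_0)H_1}f + i\int_{t_0}^t e^{-i(t-s)H_1}V_2(\cdot-s\vec e_1)\psi(s)\,ds .
\]
Apply $\langle x-x_0\rangle^{-\sigma}P_c(H_1)$ and $\langle x-x_0\rangle^{-\sigma}P_b(H_1)$ separately. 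On the continuous-spectrum part the single-center estimate above applies directly: the free term contributes $\langle t-t_0\rangle^{-3/2}$, and in the integral $V_2(\cdot-s\vec e_1)$ is an exponentially decaying weight centered at $s\vec e_1$, so $\langle x-x_0\rangle^{-\sigma}e^{-i(t-s)H_1}P_c(H_1)V_2(\cdot-s\vec e_1)$ has a factor $\langle t-s\rangle^{-3/2}$ times the norm of $V_2(\cdot-s\vec e_1)\langle x-s\vec e_1\rangle^{\sigma}$-weighted local decay, and crucially $\psi(s)$ is hit by $\langle x-s\vec e_1\rangle^{-\sigma}$ which is comparable (up to the arbitrary center) to what $M(s)$ controls only after we also use the \emph{second} weighted estimate centered at the moving point $s\vec e_1$. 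This is why one must run the argument for the pair of weights simultaneously: define $N(t):=\|\langle x-t\vec e_1\rangle^{-\sigma}\psi(t)\|_2$ as well, and close a coupled system of integral inequalities
\[
M(t)\lesssim \langle t-t_0\rangle^{-3/2} + \int_{t_0}^t \langle t-s\rangle^{-3/2}\,N(s)\,ds + (\text{bound-state error}),
\]
and the symmetric inequality for $N$ obtained by Duhamel against $\mathfrak{g}_{\vec e_1}^{-1}e^{-i(t-t_0)H_2}\mathfrak{g}_{\vec e_1}$.

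The bound-state error terms are where Proposition \ref{prop:(RSS Prop 3.1)} enters: $\langle x-x_0\rangle^{-\sigma}P_b(H_1)$ has operator norm $\lesssim 1$, and $\|P_b(H_1)\psi(s)\|_2 + \|P_b(H_2,s)\psi(s)\|_2 \lesssim e^{-\alpha(s-t_0)}$ because $\psi(s)=U(s,t_0)P_s(t_0)f \in H_s(s)$ by Lemma \ref{lem:projS}, so these contributions are integrable and decay like $\langle t-t_0\rangle^{-3/2}$ (indeed faster). Finally, to solve the coupled integral inequality I would use the standard continuity/bootstrap argument: the kernel $\langle t-s\rangle^{-3/2}$ is in $L^1$, so a Gronwall-type iteration on $[t_0,T]$ with $T$ a priori finite, combined with the a priori bound $M(t),N(t)\le \|\psi(t)\|_2\le C$ from $L^2$-boundedness of $U$, upgrades the uniform bound to the decay rate $\langle t-t_0\rangle^{-3/2}$ with constant independent of $T$; then let $T\to\infty$. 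The main obstacle is precisely the self-improving estimate of the convolution term: one has to verify that $\sup_t \langle t-t_0\rangle^{3/2}\!\int_{t_0}^t \langle t-s\rangle^{-3/2}\langle s-t_0\rangle^{-3/2}\,ds < \infty$ (true, since the convolution of $\langle\cdot\rangle^{-3/2}$ with itself decays like $\langle\cdot\rangle^{-3/2}$ in dimension-one time up to a logarithm — one actually needs to be slightly careful and may prefer to prove a weaker rate first and bootstrap, or exploit that the moving potential $V_2(\cdot-s\vec e_1)$ is only relevant on a bounded $s$-window where $|s\vec e_1 - x_0|$ is small, giving genuine extra decay from the non-overlapping of the two scattering channels), and to check the constants are uniform in $x_0,x_1$ and in the endpoint $T$.
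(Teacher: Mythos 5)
Your overall architecture (split into $P_c(H_1)$ and $P_b(H_1)$, Duhamel the continuous-spectrum part against $e^{-iH_1 t}$, run a coupled system of weighted bounds $M(t),N(t)$ for the two moving centers, use Proposition~\ref{prop:(RSS Prop 3.1)} for the bound-state error) is close in spirit to the paper's bootstrap, and the coupled inequalities
\[
M(t)\lesssim \langle t-t_0\rangle^{-3/2}+C_V\int_{t_0}^t\langle t-s\rangle^{-3/2}N(s)\,ds+e^{-\alpha(t-t_0)}
\]
and its mirror for $N$ are correctly set up. But the argument does not close where you claim it does. If you insert the target decay $M(s),N(s)\le C(T)\langle s-t_0\rangle^{-3/2}$ into the convolution, you recover $C(T)\lesssim 1 + C_V C_2\, C(T)$ with $C_V C_2$ a fixed constant of the potentials, \emph{not} a small one, so the iteration does not improve the bound. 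Your suggestion that "the a priori $L^2$ bound $M(t),N(t)\le C$ upgrades the uniform bound to the decay rate" is precisely the step that fails: the kernel $\langle t-s\rangle^{-3/2}$ is indeed in $L^1$, but there is no contraction in the Gronwall step without an extra smallness factor, and none is present in your setup.

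This is exactly why the paper does something much more elaborate. It Duhamels against the \emph{free} flow and decomposes the time integral into $[t_0,t_0+A]\cup[t_0+A,t-A]\cup[t-A,t]$: the near-$t_0$ window contributes a $T$-independent constant, the middle window gains a factor $h(A)\lesssim A^{-1}$ (genuine smallness from the tail of the convolution), and the near-$t$ window $L_3$ is the true crux. There the paper re-expands $U$ against $H_1$, splits in $P_c(H_1)/P_b(H_1)$, and for the continuous part further splits the inner time variable $\tau$ into three ranges ($S,Z,J$), and finally decomposes the hardest piece $J$ into low and high frequency: $J_L$ is controlled via a commutator/non-stationary-phase argument exploiting that the supports of $V_1$ and $V_2(\cdot-\tau\vec e_1)$ become essentially disjoint for $\tau$ large, and $J_H$ via the Kato smoothing estimate (Lemma~\ref{lem:kato}), each of which delivers a factor that can be made small by choosing $B$ and $M$ large. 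You gesture at this at the very end ("exploit that the moving potential ... is only relevant on a bounded $s$-window ... non-overlapping of the two scattering channels"), which is the right idea, but the proposal does not carry it out, and without it the bootstrap cannot close. That is the genuine gap: the smallness mechanism — interval splitting with parameters $A,B,M$, the non-stationary-phase estimate for $J_L$, and the Kato smoothing for $J_H$ — is the heart of the proof and is missing from your plan.
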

Here $2\rightarrow2$ means the norm as an operator from $L^{2}$
to $L^{2}$ and $P_{s}$ defined as the projection onto the scattering
space as above in sense of Definition \ref{SP}. Also as usual, $\langle x\rangle=\left(|x|^{2}+1\right)^{\frac{1}{2}}$.
The second weighted estimate we want to show is the following:
\begin{lem}
\label{lem:We2}For $\sigma>\frac{3}{2}$

\begin{equation}
\int_{0}^{\infty}\left\Vert \left\langle x-x(t)\right\rangle ^{-\sigma}U(t,t_{0})P_{s}(t_{0})u\right\Vert _{L_{x}^{2}}^{2}\,dt\leq C\left\Vert u\right\Vert _{L^{2}}^{2}\label{eq:we2}
\end{equation}
for all $x(t)\in\mathcal{C}\left([0,\infty),\mathbb{R}^{3}\right)$. 
\end{lem}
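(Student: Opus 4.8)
The plan is a standard $TT^{*}$ argument, with Lemma \ref{lem:We1} supplying the pointwise-in-time dispersive input and Lemma \ref{lem:projS} used to commute the scattering projections past the propagator.

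Fix $t_{0}$ and write $W(t)$ for the (bounded, self-adjoint, uniformly in $t$) multiplication operator by $\langle x-x(t)\rangle^{-\sigma}$ on $L_{x}^{2}$. Define
\[
T:L_{x}^{2}(\mathbb{R}^{3})\to L^{2}\big([0,\infty),\,L_{x}^{2}(\mathbb{R}^{3})\big),\qquad (Tu)(t)=W(t)\,U(t,t_{0})\,P_{s}(t_{0})\,u .
\]
Estimate \eqref{eq:we2} is precisely the bound $\|T\|_{L_{x}^{2}\to L_{t}^{2}L_{x}^{2}}\le C$, which by the $TT^{*}$ principle is equivalent to $\|TT^{*}\|_{L_{t}^{2}L_{x}^{2}\to L_{t}^{2}L_{x}^{2}}\le C^{2}$. (As usual one runs the computation first for $u$ in a dense class and $F\in C_{c}([0,\infty),L_{x}^{2})$ and then extends by density; all the manipulations below are legitimate there.)

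Next, compute $TT^{*}$. Using the unitarity $U(t,t_{0})^{*}=U(t_{0},t)$, the self-adjointness of $W(t)$ and of $P_{s}(t_{0})$, the composition law $U(s,t_{0})U(t_{0},t)=U(s,t)$, and the intertwining $P_{s}(t_{0})U(t_{0},t)=U(t_{0},t)P_{s}(t)$ from Lemma \ref{lem:projS}, one finds that $TT^{*}$ is the integral operator
\[
(TT^{*}F)(s)=\int_{0}^{\infty}K(s,t)\,F(t)\,dt,\qquad K(s,t)=\langle x-x(s)\rangle^{-\sigma}\,U(s,t)\,P_{s}(t)\,\langle x-x(t)\rangle^{-\sigma}.
\]
For $s\ge t$ this operator is exactly the one estimated in Lemma \ref{lem:We1} (with the free points $x_{0}=x(s)$, $x_{1}=x(t)$), so $\|K(s,t)\|_{2\to2}\le C\langle s-t\rangle^{-3/2}$. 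For $s<t$ we pass to the adjoint: $K(s,t)^{*}=\langle x-x(t)\rangle^{-\sigma}P_{s}(t)U(t,s)\langle x-x(s)\rangle^{-\sigma}$, and one more application of Lemma \ref{lem:projS} turns this into $\langle x-x(t)\rangle^{-\sigma}U(t,s)P_{s}(s)\langle x-x(s)\rangle^{-\sigma}$, which is again of the form in Lemma \ref{lem:We1} since now $t>s$; hence $\|K(s,t)\|_{2\to2}=\|K(s,t)^{*}\|_{2\to2}\le C\langle t-s\rangle^{-3/2}$. In all cases $\|K(s,t)\|_{L_{x}^{2}\to L_{x}^{2}}\le C\langle s-t\rangle^{-3/2}$.

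Finally, since $\sigma>\tfrac32$ yields the decay exponent $\tfrac32>1$, the function $\langle\cdot\rangle^{-3/2}$ is integrable on $\mathbb{R}$, and Young's convolution inequality (equivalently, Schur's test) gives
\[
\|TT^{*}F\|_{L_{t}^{2}L_{x}^{2}}\le\Big\|\int_{0}^{\infty}\|K(s,t)\|_{2\to2}\,\|F(t)\|_{L_{x}^{2}}\,dt\Big\|_{L_{s}^{2}}\le C\,\big\|\langle\cdot\rangle^{-3/2}\big\|_{L^{1}(\mathbb{R})}\,\|F\|_{L_{t}^{2}L_{x}^{2}},
\]
so $\|TT^{*}\|<\infty$ and therefore $\|T\|<\infty$, which is \eqref{eq:we2}. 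The only genuinely delicate point is the bookkeeping of the time-ordering in $K(s,t)$: Lemma \ref{lem:We1} is phrased for the ``forward'' pair $t\ge t_{0}$, so the region $s<t$ must be reduced to it by taking adjoints together with the commutation relation of Lemma \ref{lem:projS}; beyond this, no analytic input other than Lemmas \ref{lem:We1} and \ref{lem:projS} is required.
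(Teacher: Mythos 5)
Your $TT^{*}$ argument is correct, and it is a genuinely different route from the paper's. The paper proves Lemmas \ref{lem:We1} and \ref{lem:We2} simultaneously by a single bootstrap: it inserts both inductive hypotheses \eqref{eq:boot1} and \eqref{eq:boot2} on $[t_0,T]$, runs the Duhamel decomposition $U=F+iL+iG$ with the near/middle/far splitting $L_{1},L_{2},L_{3}$ (and $G_{1},G_{2},G_{3}$), the further splitting $L_{3,c}=I+iK$, $K=S+Z+J$, and the low/high frequency splitting $J=J_L+J_H$, and closes both estimates together after choosing $A,B,M$ large. Your proof instead takes \ref{lem:We1} as already established, and then obtains \ref{lem:We2} as a formal $TT^{*}$ consequence: the self-adjointness of $U(t,t_0)$ and $P_s$ together with the intertwining $P_{s}(s)U(s,\tau)=U(s,\tau)P_{s}(\tau)$ from Lemma \ref{lem:projS} identifies the kernel of $TT^*$ as $\langle x-x(s)\rangle^{-\sigma}U(s,t)P_s(t)\langle x-x(t)\rangle^{-\sigma}$, which (using the adjoint trick for $s<t$, again via Lemma \ref{lem:projS}) is controlled by $C\langle s-t\rangle^{-3/2}$, so Schur/Young finishes. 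This bookkeeping for the time-ordering in $K(s,t)$ is the one subtle point and you handle it correctly.

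What each approach buys: your route is substantially shorter and cleanly separates the two lemmas, exhibiting \ref{lem:We2} as a soft consequence of the weighted pointwise-in-time decay \ref{lem:We1} and the intertwining Lemma \ref{lem:projS}; it is logically sound because the paper's bootstrap for \ref{lem:We1} actually closes using only hypothesis \eqref{eq:boot1} (and, as the paper's concluding remark observes, \ref{lem:We1} can also be read off directly from the dispersive estimate of Theorem \ref{thm:dispersive}). The paper's joint bootstrap is more robust: it deliberately avoids appealing to the $L^{1}\to L^{\infty}$ dispersive estimate and proves both weighted bounds from scratch, which is useful in contexts (e.g.\ the matrix model) where the dispersive estimate may be harder to invoke or where one would rather not decouple the two pieces. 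Both proofs are valid; yours is the more economical if \ref{lem:We1} is taken as given.
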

Heuristically, we can see the above two estimates hold for the evolution
of a free Schr\"odinger equation since a free particle moves towards
infinity. The weights just play roles like indicator functions of
certain finite regions. Then surely, as time evolves, the particle will
leave any of those regions. So we have the decay of the wave function.
In our case, the state in the scattering space will just move asymptotically
like a free particle, so we should expect the above result. The second
estimate is a variant of the above heuristics adjusted to our model since we have moving potentials.

Before we prove Lemma \ref{lem:We1}
and Lemma \ref{lem:We2}, we show how to derive Strichartz estimates
for the charge transfer model based on them.

\begin{proof}[Proof of Theorem \ref{thm:Strich}]
Let $\psi(t)=U(t,0)\psi_{0}$ and by our assumption we have $P_{s}(0)\psi_{0}=\psi_{0}$.
Rewrite the charge transfer model as 
\[
i\psi_{t}+\frac{1}{2}\Delta\psi=V_{1}\psi+V_{2}(\cdot-t\vec{e_{1}})\psi.
\]
Now we apply the endpoint Strichartz estimate \cite{KT} for the free
Schr\"odinger equation, we get for a Schr\"odinger admissible pair
$(p,q)$ in $\mathbb{R}^{3}$, one has 
\[
\left\Vert \psi\right\Vert _{L_{t}^{p}[0,\infty)]L_{x}^{q}}\leq C\left\Vert V_{1}\psi+V_{2}(\cdot-t\vec{e_{1}})\psi\right\Vert _{L_{t}^{2}[0,\infty)L_{x}^{\frac{6}{5}}}+\left\Vert \psi_{0}\right\Vert _{L_{x}^{2}}
\]
Since our potentials decay fast, we can pick $m$ large (in particular
$m>\frac{3}{2}$) such that by H\"older's inequality\textbf{ }we have,
\begin{eqnarray*}
\left\Vert V_{1}\psi+V_{2}(\cdot-t\vec{e_{1}})\psi\right\Vert _{L_{t}^{2}[0,\infty)L_{x}^{\frac{6}{5}}} & \leq & \left\Vert V_{1}\psi\right\Vert _{L_{t}^{2}[0,\infty)L_{x}^{\frac{6}{5}}}+\left\Vert V_{2}(\cdot-t\vec{e_{1}})\psi\right\Vert _{L_{t}^{2}[0,\infty)L_{x}^{\frac{6}{5}}}\\
\left\Vert V_{1}\psi\right\Vert _{L_{t}^{2}[0,\infty)L_{x}^{\frac{6}{5}}} & \leq & C_{V}\left\Vert \left\langle x\right\rangle ^{-m}\psi\right\Vert _{L_{t}^{2}[0,\infty)L_{x}^{2}}\\
\left\Vert V_{2}(\cdot-t\vec{e_{1}})\psi\right\Vert _{L_{t}^{2}[0,\infty)L_{x}^{\frac{6}{5}}} & \leq & C_{V}\left\Vert \left\langle x-t\vec{e_{1}}\right\rangle ^{-m}\psi\right\Vert _{L_{t}^{2}[0,\infty)L_{x}^{2}}
\end{eqnarray*}
Now by our above two claimed estimates Lemma \ref{lem:We1} and Lemma
\ref{lem:We2}, we have 
\[
\left\Vert \left\langle x\right\rangle ^{-m}\psi\right\Vert _{L_{t}^{2}[0,\infty)L_{x}^{2}}\leq C\left\Vert \psi_{0}\right\Vert _{L^{2}},
\]
\[
\left\Vert \left\langle x-t\vec{e_{1}}\right\rangle ^{-m}\psi\right\Vert _{L_{t}^{2}[0,\infty)L_{x}^{2}}\leq C\left\Vert \psi_{0}\right\Vert _{L^{2}}.
\]
Then combine all estimates above, we get 
\[
\left\Vert \psi\right\Vert _{L_{t}^{p}[0,\infty)L_{x}^{q}}\leq C\left\Vert \left\langle x\right\rangle ^{-m}\psi\right\Vert _{L_{t}^{2}[0,\infty)L_{x}^{2}}+C\left\Vert \left\langle x-t\vec{e_{1}}\right\rangle ^{-m}\psi\right\Vert _{L_{t}^{2}[0,\infty)L_{x}^{2}}\leq C\left\Vert \psi_{0}\right\Vert _{L^{2}}.
\]
Therefore, we have the desired Strichartz estimate 
\[
\left\Vert \psi\right\Vert _{L_{t}^{p}\left([0,\infty),\,L_{x}^{q}\right)}\leq C\left\Vert \psi_{0}\right\Vert _{L^{2}}.
\]
as claimed.
\end{proof}
In the next section, we will show as a byproduct of Strichartz estimates, \eqref{eq:Stri}, we can get the energy boundedness of the
whole evolution of the charge transfer model.

\subsection{Proof of Lemmas \ref{lem:We1} and \ref{lem:We2}}
To rigorously show Lemmas \ref{lem:We1} and \ref{lem:We2}
are consistent with our heuristics, we consider the free evolution first. We claim that the first estimate \eqref{eq:we1} holds for
the free Schr\"odinger equation. 
\begin{lem}
	\label{lem:FWe1}For $\sigma>\frac{3}{2}$ 
	\begin{equation}
		\left\Vert \langle x-x_{0}\rangle^{-\sigma}e^{i\frac{\Delta}{2}(t-t_{0})}\langle x-x_{1}\rangle^{-\sigma}\right\Vert _{2\rightarrow2}\leq C\frac{1}{\langle t-t_{0}\rangle^{\frac{3}{2}}}\label{eq:fwe1}
	\end{equation}
	for all $x_{0}$ and $x_{1}$ in $\mathbb{R}^{3}$. \end{lem}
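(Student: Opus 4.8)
The plan is to reduce to the case $x_1 = 0$ by translation invariance, and then split the time interval into a short part $|t-t_0|\le 1$ and a long part $|t-t_0|\ge 1$, using $L^2$-unitarity of the free propagator on the former and the dispersive bound \eqref{eq:13} on the latter.

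First I would exploit that $e^{i\frac{\Delta}{2}(t-t_0)}$ commutes with spatial translations. Conjugating the operator in \eqref{eq:fwe1} by $f\mapsto f(\cdot - x_1)$ carries $\langle x-x_1\rangle^{-\sigma}$ to $\langle x\rangle^{-\sigma}$ and $\langle x-x_0\rangle^{-\sigma}$ to $\langle x-(x_0-x_1)\rangle^{-\sigma}$, while leaving the propagator and all $2\to 2$ operator norms unchanged. So it suffices to bound $\bigl\|\langle x-a\rangle^{-\sigma}\,e^{i\frac{\Delta}{2}s}\,\langle x\rangle^{-\sigma}\bigr\|_{2\to 2}$ by $C\langle s\rangle^{-3/2}$ uniformly in $a\in\mathbb{R}^3$, where $s := t-t_0$. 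For $|s|\le 1$ this is immediate: $\langle x\rangle^{-\sigma}$ and $\langle x-a\rangle^{-\sigma}$ are pointwise $\le 1$, hence contractions on $L^2$, and $e^{i\frac{\Delta}{2}s}$ is unitary, so the norm is $\le 1\lesssim\langle s\rangle^{-3/2}$ there.

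For $|s|\ge 1$ I would invoke \eqref{eq:13} with $n=3$, $p=\infty$ (equivalently $H=H_0$ in \eqref{eq:16}), which gives $\bigl\|e^{i\frac{\Delta}{2}s}g\bigr\|_{L^\infty}\lesssim |s|^{-3/2}\|g\|_{L^1}$ for all $s\ne 0$. Then for $u\in L^2(\mathbb{R}^3)$,
\[
\bigl\|\langle x-a\rangle^{-\sigma}e^{i\frac{\Delta}{2}s}\langle x\rangle^{-\sigma}u\bigr\|_{L^2}\le \bigl\|\langle x-a\rangle^{-\sigma}\bigr\|_{L^2}\,\bigl\|e^{i\frac{\Delta}{2}s}\langle x\rangle^{-\sigma}u\bigr\|_{L^\infty}\lesssim |s|^{-3/2}\,\bigl\|\langle\cdot\rangle^{-\sigma}\bigr\|_{L^2}\,\bigl\|\langle x\rangle^{-\sigma}u\bigr\|_{L^1},
\]
and by Cauchy--Schwarz $\bigl\|\langle x\rangle^{-\sigma}u\bigr\|_{L^1}\le \bigl\|\langle\cdot\rangle^{-\sigma}\bigr\|_{L^2}\|u\|_{L^2}$. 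Since $\sigma>\tfrac32$ we have $\langle\cdot\rangle^{-\sigma}\in L^2(\mathbb{R}^3)$, so the resulting constant $\bigl\|\langle\cdot\rangle^{-\sigma}\bigr\|_{L^2}^2$ is finite and, crucially, independent of $a$. Combining the two ranges of $s$ yields \eqref{eq:fwe1}.

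There is essentially no obstacle here; the one point deserving attention is the bookkeeping of where the hypothesis $\sigma>\tfrac32$ is used. It is precisely the condition for $\langle\cdot\rangle^{-\sigma}$ to be square-integrable on $\mathbb{R}^3$, and it enters twice in the long-time estimate --- once to put the outer weight in $L^2$, once in the Cauchy--Schwarz step --- while the symmetric role of the two weights after translation is what makes the final constant uniform in $x_0$ and $x_1$.
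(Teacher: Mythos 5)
Your proof is correct and follows essentially the same route as the paper: split at $|t-t_0|=1$, use $L^2$ unitarity of the free propagator on the short interval, and use the $L^1\to L^\infty$ dispersive bound together with square-integrability of $\langle\cdot\rangle^{-\sigma}$ on the long interval. The translation step and the explicit Cauchy--Schwarz bookkeeping are a slightly more spelled-out version of what the paper compresses into one appeal to Young's inequality.
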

\begin{proof}
	Let $s=t-t_{0}$, then if $|s|\leq1$, clearly by $\left\Vert e^{i\frac{\Delta}{2}s}\right\Vert _{2\rightarrow2}\leq1$
	and $\sigma>0$,
	we can get the desired result. 
	
	If $|s|\geq1$, we apply the dispersive
	estimate for the free evolution. Then by Young's inequality we get 
	\[
	\left\Vert \langle x-x_{0}\rangle^{-\sigma}e^{i\frac{\Delta}{2}(t-t_{0})}\langle x-x_{1}\rangle^{-\sigma}\right\Vert _{2\rightarrow2}\lesssim\left\Vert \langle x\rangle^{-\sigma}\right\Vert _{L^{2}}^{2}\left\Vert e^{is\frac{\Delta}{2}}\right\Vert _{1\rightarrow\infty}\lesssim|s|^{-\frac{3}{2}}.
	\]
	So the desired estimate holds.
\end{proof}
Also the second estimate \eqref{eq:we2} holds for the free Schr\"odinger evolution
by the endpoint Strichartz estimate, estimate \eqref{eq:ordstri}.
\begin{lem}
	\label{lem:Fwe2}For $\sigma>\frac{3}{2}$
	
	\begin{equation}
		\int_{0}^{\infty}\left\Vert \left\langle x-x(t)\right\rangle ^{-\sigma}e^{it\frac{\Delta}{2}}u\right\Vert _{L_{x}^{2}}^{2}\,dt\leq C\left\Vert u\right\Vert _{L^{2}}^{2}.\label{eq:fwe2}
	\end{equation}
\end{lem}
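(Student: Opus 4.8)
The plan is to reduce \eqref{eq:fwe2} to the endpoint Strichartz estimate \eqref{eq:ordstri} for the free propagator by an elementary Hölder argument in the space variable. First I would observe that for $\sigma>\frac{3}{2}>1$ the function $\langle x\rangle^{-\sigma}$ lies in $L^{3}(\mathbb{R}^{3})$, and that its $L^{3}$ norm is invariant under translation, so that
\[
\left\Vert \langle x-x(t)\rangle^{-\sigma}\right\Vert _{L^{3}_{x}}=\left\Vert \langle x\rangle^{-\sigma}\right\Vert _{L^{3}}=:C_{\sigma}<\infty
\]
for every $t$. The continuity of $x(\cdot)$ is used only to guarantee that the map $t\mapsto\big\Vert \langle x-x(t)\rangle^{-\sigma}e^{it\frac{\Delta}{2}}u\big\Vert_{L^{2}_{x}}$ is measurable.

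Next, by Hölder's inequality in $x$ with the exponent split $\frac{1}{2}=\frac{1}{3}+\frac{1}{6}$, for each fixed $t$ one has
\[
\left\Vert \langle x-x(t)\rangle^{-\sigma}e^{it\frac{\Delta}{2}}u\right\Vert _{L^{2}_{x}}\le\left\Vert \langle x-x(t)\rangle^{-\sigma}\right\Vert _{L^{3}_{x}}\left\Vert e^{it\frac{\Delta}{2}}u\right\Vert _{L^{6}_{x}}=C_{\sigma}\left\Vert e^{it\frac{\Delta}{2}}u\right\Vert _{L^{6}_{x}}.
\]
Squaring and integrating in $t\in[0,\infty)$ then gives
\[
\int_{0}^{\infty}\left\Vert \langle x-x(t)\rangle^{-\sigma}e^{it\frac{\Delta}{2}}u\right\Vert _{L^{2}_{x}}^{2}\,dt\le C_{\sigma}^{2}\left\Vert e^{it\frac{\Delta}{2}}u\right\Vert _{L^{2}_{t}([0,\infty),\,L^{6}_{x})}^{2}.
\]

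Finally I would note that $(p,q)=(2,6)$ is precisely the endpoint Schr\"odinger-admissible pair in dimension three, since $\frac{2}{2}+\frac{3}{6}=\frac{3}{2}$; hence the estimate \eqref{eq:ordstri} applied with $H=H_{0}$ (so that $P_{c}=\mathrm{Id}$) yields $\big\Vert e^{it\frac{\Delta}{2}}u\big\Vert_{L^{2}_{t}L^{6}_{x}}\lesssim\Vert u\Vert_{L^{2}}$, and combining this with the previous display produces \eqref{eq:fwe2}. The only nontrivial ingredient is the admissibility of the $q=2$ endpoint, which in $\mathbb{R}^{3}$ is exactly the Keel--Tao theorem \cite{KT} quoted after \eqref{eq:ordstri}; beyond invoking it, no further work is required, so there is no real obstacle here.
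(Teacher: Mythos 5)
Your proof is correct and matches the paper's argument exactly: both use Hölder's inequality in $x$ (with the weight $\langle x-x(t)\rangle^{-\sigma}\in L^3_x$, translation-invariant) to dominate the $L^2_x$ norm by the $L^6_x$ norm, and then invoke the Keel--Tao endpoint Strichartz estimate for the admissible pair $(2,6)$ in $\mathbb{R}^3$. You simply spell out a few details (the $L^3$ membership, translation invariance, measurability) that the paper leaves implicit.
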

\begin{proof}
	By H\"older's inequality, we have 
	
	\[
	\left\Vert \left\langle x-x(t)\right\rangle ^{-\sigma}e^{it\frac{\Delta}{2}}u\right\Vert _{L_{x}^{2}}\lesssim\left\Vert e^{it\frac{\Delta}{2}}u\right\Vert _{L_{x}^{6}}.
	\]
	Then by the endpoint Strichartz estimate in $\mathbb{R}^{3}$, \cite{KT},
	we have 
	\[
	\left\Vert e^{it\frac{\Delta}{2}}u\right\Vert _{L_{t}^{2}L_{x}^{6}}\leq C\left\Vert u\right\Vert _{L^{2}}.
	\]
	Therefore, we can conclude 
	\[
	\int_{0}^{\infty}\left\Vert \left\langle x-x(t)\right\rangle ^{-\sigma}e^{it\frac{\Delta}{2}}u\right\Vert _{L_{x}^{2}}^{2}\,dt\leq C\left\Vert u\right\Vert _{L^{2}}^{2}.
	\]
The Lemma is proved.	
\end{proof}

Now we show Lemmas \ref{lem:We1} and \ref{lem:We2}
by a bootstrap argument similar to the one in \cite{RSS}. As
usual, the constant $C$ varies from line to line.

First of all, we note the following simple facts:
 Since $P_{s}(t_{0})u$
satisfies following estimates, for $p\geq2$, 

\[
\left\Vert P_{b}\left(H_{1}\right)U(t,t_{0})P_{s}(t_{0})\right\Vert _{L^{2}\rightarrow L^{p}}\lesssim e^{-\alpha(p)\left|t-t_{0}\right|},
\]
and 
\[
\left\Vert P_{b}\left(H_{2},t\right)U(t,t_{0})P_{s}(t_{0})\right\Vert _{L^{2}\rightarrow L^{p}}\lesssim e^{-\beta(p)\left|t-t_{0}\right|}.
\]
Then surely,
\[
\left\Vert \langle x-x_{0}\rangle^{-\sigma}P_{b}\left(H_{1}\right)U(t,t_{0})P_{s}(t_{0})\langle x-x_{1}\rangle^{-\sigma}\right\Vert _{2\rightarrow2}\leq C\frac{1}{\langle t-t_{0}\rangle^{\frac{3}{2}}},
\]

\[
\left\Vert \langle x-x_{0}\rangle^{-\sigma}P_{b}\left(H_{2},t\right)U(t,t_{0})P_{s}(t_{0})\langle x-x_{1}\rangle^{-\sigma}\right\Vert _{2\rightarrow2}\leq C\frac{1}{\langle t-t_{0}\rangle^{\frac{3}{2}}}.
\]
For the second weighted estimate, with some $p\geq2$,

\[
\left\Vert \left\langle x-x(t)\right\rangle ^{-\sigma}P_{b}\left(H_{1}\right)U(t,t_{0})P_{s}(t_{0})u\right\Vert _{L_{x}^{2}}\lesssim e^{-\alpha(p)\left|t-t_{0}\right|}\left\Vert u\right\Vert _{L_{x}^{2}},
\]
\[
\left\Vert \left\langle x-x(t)\right\rangle ^{-\sigma}P_{b}\left(H_{2},t\right)U(t,t_{0})P_{s}(t_{0})u\right\Vert _{L_{x}^{2}}\lesssim e^{-\beta(p)\left|t-t_{0}\right|}\left\Vert u\right\Vert _{L_{x}^{2}}.
\]
So 
\[
\int_{0}^{\infty}\left\Vert \left\langle x-x(t)\right\rangle ^{-\sigma}P_{b}\left(H_{1}\right)U(t,t_{0})P_{s}(t_{0})u\right\Vert _{L_{x}^{2}}dt\lesssim\left\Vert u\right\Vert _{L_{x}^{2}}^{2},
\]
and 
\[
\int_{0}^{\infty}\left\Vert \left\langle x-x(t)\right\rangle ^{-\sigma}P_{b}\left(H_{2},t\right)U(t,t_{0})P_{s}(t_{0})u\right\Vert _{L_{x}^{2}}dt\lesssim\left\Vert u\right\Vert _{L_{x}^{2}}^{2}.
\]
By the Duhamel formula, we write  
\begin{eqnarray}
U(t,t_{0})P_{s}(t_{0}) & = & e^{i\frac{1}{2}\Delta(t-t_{0})}P_{s}(t_{0})+i\int_{t_{0}}^{t}e^{i\frac{1}{2}\Delta(t-s)}V_{1}U(s,t_{0})P_{s}(t_{0})\,ds\nonumber \\
&  & +i\int_{t_{0}}^{t}e^{i\frac{1}{2}\Delta(t-s)}V_{2}(\cdot-s\vec{e_{1}})U(s,t_{0})P_{s}(t_{0})\,ds,\label{eq:duham}
\end{eqnarray}
and let
\begin{equation}
U(t,t_{0})=F+iL+iG\label{eq:decomp1}
\end{equation}
Surely, there is no problem with the free piece $F$ as we discussed
above by Lemmas \ref{lem:FWe1} and \ref{lem:Fwe2}.

Now fix $T$ large enough and apply Gronwall's equality. Then we
can find a large constant $C(T)$ such that 
\begin{equation}
\left\Vert \langle x-x_{0}\rangle^{-\sigma}U(t,t_{0})P_{s}(t_{0})\langle x-x_{1}\rangle^{-\sigma}\right\Vert _{2\rightarrow2}\leq C(T)\frac{1}{\langle t-t_{0}\rangle^{\frac{3}{2}}}\label{eq:boot1}
\end{equation}
\begin{equation}
\int_{0}^{T}\left\Vert \left\langle x-x(t)\right\rangle ^{-\sigma}U(t,t_{0})P_{s}(t_{0})u\right\Vert _{L_{x}^{2}}^{2}\,dt\leq C^{2}(T)\left\Vert u\right\Vert _{L^{2}}^{2}\label{eq:boot2}
\end{equation}
hold for $t\leq T$. 

Next we imitate the bootstrap process in \cite{RSS} and \cite{CM}. Fix a large constant $A$ to be determined later. We
also assume $T-t_{0}\gg A$. As in \cite{RSS}, for $t\leq T$
we consider the decomposition of interval $[t_{0},t]=[t_{0},t_{0}+A]\cup[t_{0}+A,t-A]\cup[t-A,t].$
Set 
\[
L_{1}=\int_{t_{0}}^{t_{0}+A}e^{i\frac{1}{2}\Delta(t-s)}V_{1}U(s,t_{0})P_{s}(t_{0})\,ds
\]
\[
L_{2}=\int_{t_{0}+A}^{t-A}e^{i\frac{1}{2}\Delta(t-s)}V_{1}U(s,t_{0})P_{s}(t_{0})\,ds
\]
\[
L_{3}=\int_{t-A}^{t}e^{i\frac{1}{2}\Delta(t-s)}V_{1}U(s,t_{0})P_{s}(t_{0})\,ds
\]
\[
G_{1}=\int_{t_{0}}^{t_{0}+A}e^{i\frac{1}{2}\Delta(t-s)}V_{2}(\cdot-s\vec{e_{1}})U(s,t_{0})P_{s}(t_{0})\,ds
\]
\[
G_{2}=\int_{t_{0}+A}^{t-A}e^{i\frac{1}{2}\Delta(t-s)}V_{2}(\cdot-s\vec{e_{1}})U(s,t_{0})P_{s}(t_{0})\,ds
\]
\[
G_{3}=\int_{t-A}^{t}e^{i\frac{1}{2}\Delta(t-s)}V_{2}(\cdot-s\vec{e_{1}})U(s,t_{0})P_{s}(t_{0})\,ds.
\]
First, we bound $L_{1}$. With Lemma \ref{lem:FWe1}, we have
\begin{eqnarray*}
\left\Vert \langle x-x_{0}\rangle^{-\sigma}L_{1}\langle x-x_{1}\rangle^{-\sigma}\right\Vert _{2\rightarrow2}\;\;\;\;\;\;\;\;\;\;\;\;\;\;\;\;\;\;\;\;\;\;\;\;\;\;\;\;\;\;\;\;\;\;\;\;\;\;\;\;\;\;\;\\
\leq C\int_{t_{0}}^{t_{0}+A}\frac{1}{\left\langle t-s\right\rangle ^{\frac{3}{2}}}\left\Vert \left\langle x\right\rangle ^{-\sigma}U(s,t_{0})P_{s}(t_{0})\langle x-x_{1}\rangle^{-\sigma}\right\Vert _{2\rightarrow2}ds\\
\leq C(A)\int_{t_{0}}^{t_{0}+A}\frac{1}{\left\langle t-s\right\rangle ^{\frac{3}{2}}}\frac{1}{\left\langle s-t_{0}\right\rangle ^{\frac{3}{2}}}\,ds\;\;\;\;\;\;\;\;\;\;\;\;\;\;\;\;\;\;\;\;\;\;\;\;\\
\leq C(A)\frac{1}{\left\langle t-t_{0}\right\rangle ^{\frac{3}{2}}}.\;\;\;\;\;\;\;\;\;\;\;\;\;\;\;\;\;\;\;\;\;
\end{eqnarray*}
Here we just emphasize that the constant in above estimate does not
depend on $T$. Notice that $G_{1}$ can be bounded similarly as above. 

For the second part, with Lemma \ref{lem:Fwe2},
\begin{eqnarray*}
\int_{0}^{T}\left\Vert \left\langle x-x(t)\right\rangle ^{-\sigma}L_{1}u\right\Vert _{L_{x}^{2}}^{2}\,dt\;\;\;\;\;\;\;\;\;\;\;\;\;\;\;\;\;\;\;\;\;\;\;\;\;\;\;\;\;\;\;\;\;\;\;\;\;\;\;\;\;\;\;\;\;\;\;\;\;\;\;\;\;\;\;\;\;\;\\
\leq C\int_{0}^{T}\left(\int_{t_{0}}^{t_{0}+A}\left\langle t-s\right\rangle ^{-\frac{3}{2}}\left\Vert \left\langle x\right\rangle ^{-\sigma}U(s,t_{0})P_{s}(t_{0})u\right\Vert _{L_{x}^{2}}ds\right)^{2}dt\\
\leq C\left\Vert \left\langle x\right\rangle ^{-\sigma}U(s,t_{0})P_{s}(t_{0})u\right\Vert _{L_{t}^{2}\left((t_{0},t_{0}+A),\, L_{x}^{2}\right)}^{2}\\
\leq C(A)\left\Vert u\right\Vert _{L^{2}}^{2}.\;\;\;\;\;\;\;\;\;\;\;\;\;\;\;\;\;\;\;\;\;\;\;\;\;\;\;\;\;
\end{eqnarray*}
Also $G_{1}$ can be bounded similarly. 

Next, we analyze $L_{2}$. With Lemma \ref{lem:FWe1} and the bootstrap
assumption \eqref{eq:boot1}, 
\begin{eqnarray*}
\left\Vert \langle x-x_{0}\rangle^{-\sigma}L_{2}\langle x-x_{1}\rangle^{-\sigma}\right\Vert _{2\rightarrow2}\;\;\;\;\;\;\;\;\;\;\;\;\;\;\;\;\;\;\;\;\;\;\;\;\;\;\;\;\;\;\;\;\;\;\;\;\;\;\;\;\;\;\;\;\;\;\;\;\;\;\;\\
\leq C\int_{t_{0}+A}^{t-A}\frac{1}{\left\langle t-s\right\rangle ^{\frac{3}{2}}}\left\Vert \left\langle x\right\rangle ^{-\sigma}U(s,t_{0})P_{s}(t_{0})\langle x-x_{1}\rangle^{-\sigma}\right\Vert _{2\rightarrow2}ds\\
\leq C(T)\int_{t_{0}+A}^{t-A}\frac{1}{\left\langle t-s\right\rangle ^{\frac{3}{2}}}\frac{1}{\left\langle s-t_{0}\right\rangle ^{\frac{3}{2}}}\,ds\\
\leq CC(T)A^{-\frac{1}{2}}\left\langle t-t_{0}\right\rangle ^{-\frac{3}{2}}
\end{eqnarray*}
for an absolute constant $C$. 

For the other estimate, with Lemma \ref{lem:Fwe2} and bootstrap assumption
\eqref{eq:boot2}, we conclude that
\begin{eqnarray*}
\int_{0}^{T}\left\Vert \left\langle x-x(t)\right\rangle ^{-\sigma}L_{2}u\right\Vert _{L_{x}^{2}}^{2}dt & \leq & C\int_{0}^{T}\left(\int_{t_{0}+A}^{t-A}\left\langle t-s\right\rangle ^{-\frac{3}{2}}\left\Vert \left\langle x\right\rangle ^{-\sigma}U(s,t_{0})P_{s}(t_{0})u\right\Vert _{L_{x}^{2}}ds\right)^{2}dt\\
 & \leq & h(A)\left\Vert \left\langle x\right\rangle ^{-\sigma}U(s,t_{0})P_{s}(t_{0})u\right\Vert _{L_{t}^{2}\left((0,T),\, L_{x}^{2}\right)}^{2}\\
 & \leq & h(A)C^{2}(T)\left\Vert u\right\Vert _{L^{2}}^{2}
\end{eqnarray*}
where 
\[
h(A)\lesssim A^{-1}
\]
by Young's inequality applied to the convolution 
\[
\int_{t_{0}+A}^{t-A}\left\langle t-s\right\rangle ^{-\frac{3}{2}}\left\Vert \left\langle x\right\rangle ^{-\sigma}U(s,t_{0})P_{s}(t_{0})u\right\Vert _{L_{x}^{2}}ds.
\]
So when $A$ is large, we recapture our bootstrap argument conditions,
i.e., $h(A)C^{2}(T)$ will be a small portion of $C(T)$ provided $A$
is large enough. Similar estimates hold for $G_{2}$. 

It remains to analyze $L_{3}$ and $G_{3}$. We will expand $U$ again. And the following two versions of weighted estimates
for Schr\"odinger equations with rapidly decaying potentials will be used.
\begin{lem}
\label{lem:pwe}For $\sigma>\frac{3}{2}$, and $H_{j}=-\frac{1}{2}\Delta+V_{j}$, where $V_{j}$ satisfies the decay assumption for our charge transfer
Hamiltonian, then we have

\begin{equation}
\left\Vert \langle x-x_{0}\rangle^{-\sigma}e^{iH_{j}(t-t_{0})}P_{c}\left(H_{j}\right)\langle x-x_{1}\rangle^{-\sigma}\right\Vert _{2\rightarrow2}\leq C\frac{1}{\langle t-t_{0}\rangle^{\frac{3}{2}}}\label{eq:pwe1}
\end{equation}
and 
\begin{equation}
\int_{0}^{\infty}\left\Vert \left\langle x-x(t)\right\rangle ^{-\sigma}e^{itH_{j}}P_{c}\left(H_{j}\right)u\right\Vert _{L_{x}^{2}}^{2}dt\leq C\left\Vert u\right\Vert _{L^{2}}^{2},\label{eq:pwe2}
\end{equation}
where $P_{c}\left(H_{j}\right)$ is the projection onto the continuous
spectrum of $H_{j}$.\end{lem}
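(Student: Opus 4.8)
The plan is to follow the proofs of Lemmas \ref{lem:FWe1} and \ref{lem:Fwe2} almost verbatim, with the free propagator $e^{it\frac{\Delta}{2}}$ replaced by $e^{itH_{j}}P_{c}(H_{j})$. The two free-evolution facts used there have well-known analogues for the stationary Schr\"odinger operator $H_{j}=-\frac{1}{2}\Delta+V_{j}$ under our hypotheses, namely: the uniform bound $\left\Vert e^{itH_{j}}P_{c}(H_{j})\right\Vert _{2\rightarrow2}\leq1$; the dispersive estimate
\[
\left\Vert e^{itH_{j}}P_{c}(H_{j})\right\Vert _{1\rightarrow\infty}\lesssim\left|t\right|^{-\frac{3}{2}},
\]
which holds in $\mathbb{R}^{3}$ for exponentially decaying $V_{j}$ with $0$ neither an eigenvalue nor a resonance of $H_{j}$ (see \cite{JSS,Ya,Sch} and the references therein); and, via $TT^{*}$ and the Keel--Tao endpoint \cite{KT}, the estimate $\left\Vert e^{itH_{j}}P_{c}(H_{j})u\right\Vert _{L_{t}^{2}L_{x}^{6}}\lesssim\left\Vert u\right\Vert _{L^{2}}$. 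Since $H_{j}$ is time-independent the semigroup property $e^{itH_{j}}e^{-isH_{j}}=e^{i(t-s)H_{j}}$ is available, so the last two follow from the dispersive estimate in the standard way.

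For \eqref{eq:pwe1} I would split into $\left|t-t_{0}\right|\leq1$ and $\left|t-t_{0}\right|\geq1$. In the first regime one uses $\left\Vert e^{i(t-t_{0})H_{j}}P_{c}(H_{j})\right\Vert _{2\rightarrow2}\leq1$ together with the boundedness of the weights to obtain the bound with an absolute constant. In the second regime one interpolates the dispersive estimate with Young's inequality exactly as in Lemma \ref{lem:FWe1}:
\[
\left\Vert \langle x-x_{0}\rangle^{-\sigma}e^{i(t-t_{0})H_{j}}P_{c}(H_{j})\langle x-x_{1}\rangle^{-\sigma}\right\Vert _{2\rightarrow2}\lesssim\left\Vert \langle x\rangle^{-\sigma}\right\Vert _{L^{2}}^{2}\left\Vert e^{i(t-t_{0})H_{j}}P_{c}(H_{j})\right\Vert _{1\rightarrow\infty}\lesssim\left|t-t_{0}\right|^{-\frac{3}{2}},
\]
where $\langle x\rangle^{-\sigma}\in L^{2}(\mathbb{R}^{3})$ because $2\sigma>3$. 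Combining the two regimes gives \eqref{eq:pwe1}.

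For \eqref{eq:pwe2}, H\"older's inequality gives $\left\Vert \langle x-x(t)\rangle^{-\sigma}f\right\Vert _{L_{x}^{2}}\leq\left\Vert \langle x\rangle^{-\sigma}\right\Vert _{L^{3}}\left\Vert f\right\Vert _{L_{x}^{6}}$, with $\langle x\rangle^{-\sigma}\in L^{3}(\mathbb{R}^{3})$ since $3\sigma>3$; the translation by $x(t)$ only shifts the weight and is therefore controlled uniformly in $t$, so the final bound does not depend on the curve $x(t)$. Hence
\[
\int_{0}^{\infty}\left\Vert \langle x-x(t)\rangle^{-\sigma}e^{itH_{j}}P_{c}(H_{j})u\right\Vert _{L_{x}^{2}}^{2}\,dt\lesssim\left\Vert e^{itH_{j}}P_{c}(H_{j})u\right\Vert _{L_{t}^{2}L_{x}^{6}}^{2}\lesssim\left\Vert u\right\Vert _{L^{2}}^{2}
\]
by the endpoint Strichartz estimate for $H_{j}$, which is \eqref{eq:pwe2}.

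The only genuinely nontrivial ingredient is the stationary dispersive estimate displayed above; this is precisely where the spectral hypotheses on $H_{j}$ (exponential decay of $V_{j}$, absence of a zero-energy eigenvalue or resonance) are used. Once it is granted, both parts reduce to the free-case computations already carried out in Lemmas \ref{lem:FWe1} and \ref{lem:Fwe2}, and I expect no further obstacle.
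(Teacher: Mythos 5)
Your argument is correct and coincides with the second of the two one-line proofs the paper itself gives for Lemma~\ref{lem:pwe}: ``one can apply the dispersive estimate and Strichartz estimates for perturbed Schr\"odinger equations,'' which is exactly what you carry out. The paper's primary suggestion is instead to invoke Yajima's $L^{p}$-boundedness of the wave operators $W$, so that the intertwining $e^{itH_{j}}P_{c}(H_{j})=We^{it\Delta/2}W^{*}$ reduces both bounds directly to the free-case Lemmas \ref{lem:FWe1} and \ref{lem:Fwe2}; your route instead re-derives the weighted estimates from the known $L^{1}\to L^{\infty}$ decay and endpoint Strichartz for $H_{j}$, which requires the same spectral hypotheses and is equally valid.
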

\begin{proof}
These two estimates follow from the boundedness of wave operators
\cite{Ya} and Lemma \ref{lem:FWe1} and Lemma \ref{lem:Fwe2}. Or one can apply the dispersive estimate and Strichartz estimates for perturbed Schr\"odinger equations.
\end{proof}
Now we analyze
\[
L_{3}=\int_{t-A}^{t}e^{i\frac{1}{2}\Delta(t-s)}V_{1}U(s,t_{0})P_{s}(t_{0})\,ds.
\]
Splitting $L_{3}$ with respect to the spectrum of $H_{1}$, one has
\begin{eqnarray*}
	L_{3} & = & \int_{t-A}^{t}e^{i\frac{1}{2}\Delta(t-s)}V_{1}P_{c}\left(H_{1}\right)U(s,t_{0})P_{s}(t_{0})\,ds\\
	&  & +\int_{t-A}^{t}e^{i\frac{1}{2}\Delta(t-s)}V_{1}P_{b}\left(H_{1}\right)U(s,t_{0})P_{s}(t_{0})\,ds\\
	& = & L_{3,c}+L_{3,b}.
\end{eqnarray*}
Surely, there is no problem with $L_{3,b}$ by the discussion at the very beginning of this section$P_{b}\left(H_{1}\right)U(s,t_{0})P_{s}(t_{0})$ decays exponentially.
 
For $L_{3,c}$, we use the ideas from \cite{RSS} to decompose our evolution into
low velocity and high velocity pieces. For the low velocity piece, we
directly use a commutator argument, non-stationary phase and
the fact the supports of  $V_{1}$ and $V_{2}$ become almost disjoint.
For the high velocity part, we use a version of the Kato smoothing estimate.

Expanding $U$ with respect to $H_{1}$, we can write
\[
U(t,t_{0})=e^{-iH_{1}(t-t_{0})}+i\int_{t_{0}}^{t}e^{-iH_{1}(t-s)}V_{2}(\cdot-s\vec{e_{1}})U(s,t_{0})\,ds.
\]
Then we can write 
\begin{eqnarray*}
	L_{3,c} & = & \int_{t-A}^{t}e^{i\frac{1}{2}\Delta(t-s)}V_{1}P_{c}\left(H_{1}\right)U(s,t_{0})P_{s}(t_{0})\,ds\\
	& = & \int_{t-A}^{t}e^{i\frac{1}{2}\Delta(t-s)}V_{1}P_{c}\left(H_{1}\right)e^{-iH_{1}(s-t_{0})}P_{s}(t_{0})\,ds\\
	&  & +i\int_{t-A}^{t}\int_{t_{0}}^{s}V_{1}P_{c}\left(H_{1}\right)e^{-iH_{1}(s-\tau)}V_{2}(\cdot-\tau\vec{e_{1}})U(\tau,t_{0})P_{s}(t_{0})\,d\tau ds.
\end{eqnarray*}
Consider the decomposition
\[
L_{3,c}=I+iK,
\]
\[
I=\int_{t-A}^{t}e^{i\frac{1}{2}\Delta(t-s)}V_{1}P_{c}\left(H_{1}\right)e^{-iH_{1}(s-t_{0})}P_{s}(t_{0})\,ds,
\]
\[
K=\int_{t-A}^{t}e^{i\frac{1}{2}\Delta(t-s)}V_{1}\int_{t_{0}}^{s}P_{c}\left(H_{1}\right)e^{-iH_{1}(s-\tau)}V_{2}(\cdot-\tau\vec{e_{1}})U(\tau,t_{0})P_{s}(t_{0})\,d\tau ds.
\]
There is no problem with $I$ by similar arguments for the free case
with Lemma \ref{lem:pwe}. 

Next, we decompose $K$ further as follows: 
\[
K=J+S+Z,
\]
\[
S=\int_{t-A}^{t}e^{i\frac{1}{2}\Delta(t-s)}V_{1}\int_{t_{0}}^{t_{0}+B}P_{c}\left(H_{1}\right)e^{-iH_{1}(s-\tau)}V_{2}(\cdot-\tau\vec{e_{1}})U(\tau,t_{0})P_{s}(t_{0})\,d\tau ds,
\]
\[
Z=\int_{t-A}^{t}e^{i\frac{1}{2}\Delta(t-s)}V_{1}\int_{t_{0}+B}^{s-B}P_{c}\left(H_{1}\right)e^{-iH_{1}(s-\tau)}V_{2}(\cdot-\tau\vec{e_{1}})U(\tau,t_{0})P_{s}(t_{0})\,d\tau ds,
\]
\[
J=\int_{t-A}^{t}\int_{s-B}^{s}e^{i\frac{1}{2}\Delta(t-s)}V_{1}P_{c}\left(H_{1}\right)e^{-iH_{1}(s-\tau)}V_{2}(\cdot-\tau\vec{e_{1}})U(\tau,t_{0})P_{s}(t_{0})\,d\tau ds.
\]
For $S$, a similar argument as for $L_{1}$ implies 
\begin{eqnarray*}
\left\Vert \langle x-x_{0}\rangle^{-\sigma}S\langle x-x_{1}\rangle^{-\sigma}\right\Vert _{2\rightarrow2}\;\;\;\;\;\;\;\;\;\;\;\;\;\;\;\;\;\;\;\;\;\;\;\;\;\;\;\;\;\;\;\;\;\;\;\;\;\;\;\;\;\;\;\;\;\;\;\;\;\;\;\;\;\;\;\;\;\;\;\;\;\;\;\;\;\;\;\;\;\;\;\;\\
\lesssim C\int_{t-A}^{t}\frac{1}{\left\langle t-s\right\rangle ^{\frac{3}{2}}}\int_{t_{0}}^{t_{0}+B}\frac{1}{\left\langle s-\tau\right\rangle ^{\frac{3}{2}}}\left\Vert \left\langle x\right\rangle ^{-\sigma}U(s,t_{0})P_{s}(t_{0})\langle x-x_{1}\rangle^{-\sigma}\right\Vert _{2\rightarrow2}\,d\tau ds\\
\lesssim C(B)\int_{t-A}^{t}\int_{t_{0}}^{t_{0}+B}\frac{1}{\left\langle t-s\right\rangle ^{\frac{3}{2}}}\frac{1}{\left\langle s-\tau\right\rangle ^{\frac{3}{2}}}\frac{1}{\left\langle s-t_{0}\right\rangle ^{\frac{3}{2}}}dsd\tau\\
\leq C(A,B)\frac{1}{\left\langle t-t_{0}\right\rangle ^{\frac{3}{2}}}.
\end{eqnarray*}
As usual, the constant $C$ does not depend on $T$. 

For the second piece, we also have
\begin{eqnarray*}
\int_{0}^{T}\left\Vert \left\langle x-x(t)\right\rangle ^{-\sigma}Su\right\Vert _{L_{x}^{2}}^{2}dt\;\;\;\;\;\;\;\;\;\;\;\;\;\;\;\;\;\;\;\;\;\;\;\;\;\;\;\;\;\;\;\;\;\;\;\;\;\;\;\;\;\;\;\;\;\;\;\;\;\;\;\;\;\;\;\;\;\;\;\;\;\;\;\;\;\;\;\;\;\;\;\;\;\;\;\;\;\;\;\;\;\;\;\;\;\;\;\\
\leq C\int_{0}^{T}\left(\int_{t-A}^{t}\left\langle t-s\right\rangle ^{-\frac{3}{2}}\int_{t_{0}}^{t_{0}+B}\left\langle s-\tau\right\rangle ^{-\frac{3}{2}}\left\Vert \left\langle x\right\rangle ^{-\sigma}U(s,t_{0})P_{s}(t_{0})u\right\Vert _{L_{x}^{2}}d\tau ds\right)^{2}dt\\
\lesssim C(B)A\left\Vert \left\langle x\right\rangle ^{-\sigma}U(s,t_{0})P_{s}u\right\Vert _{L_{t}^{2}\left((t_{0},t_{0}+B),\, L_{x}^{2}\right)}^{2}\;\;\;\;\;\;\;\;\;\;\;\;\;\;\;\;\;\;\;\;\;\;\;\;\;\;\;\;\;\\
\lesssim C(A,B)\left\Vert u\right\Vert _{L^{2}}^{2}.\;\;\;\;\;\;\;\;\;\;\;\;\;\;\;\;\;\;\;\;\;\;\;\;\;\;\;\;\;\;\;\;\;\;\;\;\;\;\;\;\;\;\;\;\;\;\;\;\;\;\;\;\;\;\;\;\;\;
\end{eqnarray*}
Next, for $Z$, following a similar argument to $L_{2}$, we obtain
\begin{eqnarray*}
\left\Vert \langle x-x_{0}\rangle^{-\sigma}Z\langle x-x_{1}\rangle^{-\sigma}\right\Vert _{2\rightarrow2}\;\;\;\;\;\;\;\;\;\;\;\;\;\;\;\;\;\;\;\;\;\;\;\;\;\;\;\;\;\;\;\;\;\;\;\;\;\;\;\;\;\;\;\;\;\;\;\;\;\;\;\;\;\;\;\;\;\;\;\;\;\;\;\\
\lesssim\int_{t-A}^{t}\frac{1}{\left\langle t-s\right\rangle ^{\frac{3}{2}}}ds\int_{t_{0}+B}^{s-B}\frac{1}{\left\langle s-\tau\right\rangle ^{\frac{3}{2}}}\left\Vert \left\langle x\right\rangle ^{-\sigma}U(\tau,t_{0})P_{s}(t_{0})\langle x-x_{1}\rangle^{-\sigma}\right\Vert _{2\rightarrow2}d\tau\\
\lesssim C(T)\int_{t-A}^{t}\int_{t_{0}+B}^{s-B}\frac{1}{\left\langle t-s\right\rangle ^{\frac{3}{2}}}\frac{1}{\left\langle s-\tau\right\rangle ^{\frac{3}{2}}}\frac{1}{\left\langle \tau-t_{0}\right\rangle ^{\frac{3}{2}}}\,d\tau ds\;\;\;\;\;\;\;\;\;\;\;\\
\lesssim C(T)AB^{-\frac{1}{2}}\left\langle t-t_{0}\right\rangle ^{-\frac{3}{2}}.\;\;\;\;\;\;\;\;\;\;\;\;\;\;\;\;\;\;\;\;\;\;
\end{eqnarray*}
For the second estimate,
\begin{eqnarray*}
\int_{0}^{T}\left\Vert \left\langle x-x(t)\right\rangle ^{-\sigma}Zu\right\Vert _{L_{x}^{2}}^{2}dt\;\;\;\;\;\;\;\;\;\;\;\;\;\;\;\;\;\;\;\;\;\;\;\;\;\;\;\;\;\;\;\;\;\;\;\;\;\;\;\;\;\;\;\;\;\;\;\;\;\;\;\;\;\;\;\;\;\;\;\;\;\;\;\;\;\;\;\;\;\;\;\;\;\;\;\;\;\;\;\;\\
\lesssim\int_{0}^{T}\left(\int_{t-A}^{t}\left\langle t-s\right\rangle ^{-\frac{3}{2}}\int_{t_{0}+B}^{s-B}\left\langle s-\tau\right\rangle ^{-\frac{3}{2}}\left\Vert \left\langle x\right\rangle ^{-\sigma}U(\tau,t_{0})P_{s}(t_{0})u\right\Vert _{L_{x}^{2}}d\tau ds\right)^{2}dt\\
\lesssim h(B)A\left\Vert \left\langle x\right\rangle ^{-\sigma}U(\tau,t_{0})P_{s}(t_{0})u\right\Vert _{L_{t}^{2}\left((0,T),\, L_{x}^{2}\right)\;\;\;\;\;\;\;\;\;\;\;\;\;\;\;\;\;}^{2}\\
\lesssim h(B)AC^{2}(T)\left\Vert u\right\Vert _{L^{2}}^{2}\;\;\;\;\;\;\;\;\;\;\;\;\;\;\;\;\;\;\;\;\;\;\;\;\;\;\;\;\;\;\;\;\;\;
\end{eqnarray*}
where as before, 
\[
h(B)\lesssim B^{-1}.
\]
Therefore, when we pick $B$ large enough, we have satisfied all the conditions
for the bootstrap argument. 

Finally, we analyze $J$:
\[
J=\int_{t-A}^{t}\int_{s-B}^{s}e^{i\frac{1}{2}\Delta(t-s)}V_{1}P_{c}\left(H_{1}\right)e^{-iH_{1}(s-\tau)}V_{2}(\cdot-\tau\vec{e_{1}})U(\tau,t_{0})P_{s}(t_{0})\,d\tau ds.
\]
We decompose the integral into low and high frequency parts:
\[
J_{L}=\int_{t-A}^{t}\int_{s-B}^{s}e^{i\frac{1}{2}\Delta(t-s)}V_{1}F\left(\left|\vec{p}\right|\leq M\right) e^{-iH_{1}(s-\tau)}P_{c}\left(H_{1}\right)V_{2}(\cdot-\tau\vec{e_{1}})U(\tau,t_{0})P_{s}(t_{0})\,d\tau ds,
\]
\[
J_{H}=\int_{t-A}^{t}\int_{s-B}^{s}e^{i\frac{1}{2}\Delta(t-s)}V_{1}F\left(\left|\vec{p}\right|\geq M\right)e^{-iH_{1}(s-\tau)}P_{c}\left(H_{1}\right)V_{2}(\cdot-\tau\vec{e_{1}})U(\tau,t_{0})P_{s}(t_{0})\,d\tau ds,
\]
where $F\left(\left|\vec{p}\right|\leq M\right)$ and $F\left(\left|\vec{p}\right|\geq M\right)$
denote smooth projections onto frequencies $\left|\vec{p}\right|\leq M$
and $\left|\vec{p}\right|\geq M$ respectively.

To analyze the low frequency part, we observe that for arbitrary $\epsilon>0$,
\[
\int_{s-B}^{s}\left\langle -\tau\vec{e}_{1}\right\rangle ^{-\frac{3}{2}}d\tau\leq\epsilon
\]
provided $s$ is large enough. 

Set $V_{2}^{\sigma}(x)=V_{2}(x)\left\langle x\right\rangle ^{\sigma}$, then we look at the following quantity,
\[
\left\Vert V_{1}F\left(\left|\vec{p}\right|\leq M\right)e^{-i\frac{1}{2}(s-\tau)\Delta}V_{2}^{\sigma}\left(\cdot-\tau\vec{e}_{1}\right)u\right\Vert _{L^{2}}=\left\Vert \int_{\mathbb{R}^{3}}K(x,\eta)\hat{u}(\eta)\,d\eta\right\Vert _{L^{2}}
\]
where 
\[
K(x,\eta)=V_{1}(x)\int_{\mathbb{R}^{3}}e^{-i\frac{1}{2}(s-\tau)\xi^{2}+i\xi(x+\tau\vec{e}_{1})}\chi\left(\frac{\xi}{M}\right)\widehat{V_{2}^{\sigma}}(\xi-\eta) e^{-i\eta\tau\vec{e}_{1}}\,d\xi.
\]
Observe that 
\[
\left|K(x,\eta)\right|\leq C_{M}\left\langle x\right\rangle ^{-N}\left\langle \tau\vec{e}_{1}\right\rangle ^{-N}\left\langle \eta\right\rangle ^{-N}.
\]
This decay result follows from the following two facts: 

Integration by parts with
\[
e^{iI_{2}\xi\tau\vec{e}_{1}}=\left(\frac{\tau\vec{e}_{1}\nabla_{\xi}}{\left|\tau\vec{e}_{1}\right|^{2}}\right)^{N}e^{I_{2}\xi\tau\vec{e}_{1}};
\]
and the decay estimate:
\[
\left|D^{\beta}\widehat{V_{2}^{\sigma}}(\xi-\eta)\right|\lesssim\left\langle \eta\right\rangle ^{-N},\,\,\left|\xi\right|\lesssim M.
\]
So we can conclude that for any $N>0$,
\[
\left\Vert V_{1}F\left(\left|\vec{p}\right|\leq M\right)e^{-i\frac{1}{2}(s-\tau)\Delta}V_{2}^{\sigma}\left(\cdot-\tau\vec{e}_{1}\right)\right\Vert _{2\rightarrow2}\leq C_{N,M}\left\langle \tau\vec{e}_{1}\right\rangle ^{-N}.
\]
By some similar calculations in \cite{Graf},  we conclude 
\[
\left\Vert V_{1}F\left(\left|\vec{p}\right|\leq M\right)e^{-i(s-\tau)H_{1}}P_{c}\left(H_{1}\right)V_{2}^{\sigma}\left(\cdot-\tau\vec{e}_{1}\right)\right\Vert _{2\rightarrow2}\leq C_{N,M}\left\langle \tau\vec{e}_{1}\right\rangle ^{-N}.
\]
But in our particular situation, one can do easy calculations based
on Duhamel formula, 
\[
e^{-i(s-\tau)H_{1}}=e^{-i(s-\tau)H_{0}}-i\int_{\tau}^{s}e^{-i(s-\tau)H_{0}}V_{1}e^{-i(r-\tau)H_{1}}\,dr
\]
\begin{eqnarray*}
	F\left(\left|\vec{p}\right|\leq M\right)e^{-i(s-\tau)H_{1}}P_{c}\left(H_{1}\right)V_{2}^{\sigma}\left(\cdot-\tau\vec{e}_{1}\right)\\
	=F\left(\left|\vec{p}\right|\leq M\right)e^{-i(s-\tau)H_{0}}V_{2}^{\sigma}\left(\cdot-\tau\vec{e}_{1}\right)\\
	-i\int_{\tau}^{s}F\left(\left|\vec{p}\right|\leq M\right)e^{-i(s-\tau)H_{0}}V_{1}e^{-i(r-\tau)H_{1}}V_{2}^{\sigma}\left(\cdot-\tau\vec{e}_{1}\right)\,dr.
\end{eqnarray*}
\begin{eqnarray*}
	\int_{\tau}^{s}F\left(\left|\vec{p}\right|\leq M\right)e^{-i(s-\tau)H_{0}}V_{1}e^{-i(r-\tau)H_{1}}V_{2}^{\sigma}\left(\cdot-\tau\vec{e}_{1}\right)\,dr\\
	=\int_{\tau}^{s}e^{-i(s-\tau)H_{0}}V_{1}F\left(\left|\vec{p}\right|\leq M\right)e^{-i(r-\tau)H_{1}}V_{2}^{\sigma}\left(\cdot-\tau\vec{e}_{1}\right)\,dr\\
	+\int_{\tau}^{s}e^{-i(s-\tau)H_{0}}\left[V_{1},F\left(\left|\vec{p}\right|\leq M\right)\right]e^{-i(r-\tau)H_{1}}V_{2}^{\sigma}\left(\cdot-\tau\vec{e}_{1}\right)\,dr.
\end{eqnarray*}
\begin{eqnarray*}
	\left\Vert \int_{\tau}^{s}F\left(\left|\vec{p}\right|\leq M\right)e^{-i(s-\tau)H_{0}}V_{1}e^{-i(r-\tau)H_{1}}V_{2}^{\sigma}\left(\cdot-\tau\vec{e}_{1}\right)\,dr\right\Vert _{L^{2}}\\
	\leq\int_{\tau}^{s}\left\Vert V_{1}F\left(\left|\vec{p}\right|\leq M\right)e^{-i(r-\tau)H_{1}}V_{2}^{\sigma}\left(\cdot-\tau\vec{e}_{1}\right)\right\Vert _{L^{2}}\,dr\\
	\left\Vert \int_{\tau}^{s}e^{-i(s-\tau)H_{0}}\left[V_{1},F\left(\left|\vec{p}\right|\leq M\right)\right]e^{-i(r-\tau)H_{1}}V_{2}^{\sigma}\left(\cdot-\tau\vec{e}_{1}\right)\,dr\right\Vert _{L^{2}}.
\end{eqnarray*}
Notice from construction, $0\leq s-\tau\leq B$,
\[
\left\Vert \int_{\tau}^{s}e^{-i(s-\tau)H_{0}}\left[V_{1},F\left(\left|\vec{p}\right|\leq M\right)\right]e^{-i(r-\tau)H_{1}}V_{2}^{\sigma}\left(\cdot-\tau\vec{e}_{1}\right)\,dr\right\Vert _{L^{2}}\lesssim\frac{B}{M}\left\Vert V_{2}^{\sigma}\right\Vert _{L^{2}}
\]
By Gronwall's inequality, with the fact $0\leq s-\tau\leq B$, one
has 
\begin{eqnarray}
\int_{s-B}^{s}\left\Vert V_{1}F\left(\left|\vec{p}\right|\leq M\right)e^{-i(s-\tau)H_{1}}P_{c}\left(H_{1}\right)V_{2}^{\sigma}\left(\cdot-\tau\vec{e}_{1}\right)\right\Vert _{2\rightarrow2}\nonumber \\
\lesssim e^{B}\int_{s-B}^{s}\left(C_{N,M}\left\langle \tau\vec{e}_{1}\right\rangle ^{-N}+\frac{B}{M}\left\Vert V_{2}^{\sigma}\right\Vert _{L^{2}}\right)\,d\tau\\
\lesssim\epsilon+\frac{B^{2}e^{B}}{M}\nonumber \\
\lesssim\epsilon\nonumber 
\end{eqnarray}
provided $M$ is large enough. 

Therefore, for $J_{L}$,
\begin{eqnarray*}
\left\Vert \langle x-x_{0}\rangle^{-\sigma}J_{L}\langle x-x_{1}\rangle^{-\sigma}\right\Vert _{2\rightarrow2}\;\;\;\;\;\;\;\;\;\;\;\;\;\;\;\;\;\;\;\;\;\;\;\;\;\;\;\;\;\;\;\;\;\;\;\;\;\;\;\;\;\;\;\;\;\;\;\;\;\;\;\;\;\;\;\;\;\;\;\;\;\;\;\;\;\;\;\;\;\;\;\;\;\;\;\;\;\;\;\;\\
\leq CC(T)\int_{t-A}^{t}\int_{s-B}^{s}\left\langle t-s\right\rangle ^{-\frac{3}{2}}\left\langle s-\tau\right\rangle ^{-\frac{3}{2}}\left\langle \tau\vec{e}_{1}\right\rangle ^{-\frac{3}{2}}dsd\tau\;\;\;\;\;\;\;\;\;\;\;\;\;\;\;\;\;\;\;\;\\
\leq CC(T)\left\langle t-t_{0}\right\rangle ^{-\frac{3}{2}}\int_{t-A}^{t}\int_{s-B}^{s}\left\langle t-s\right\rangle ^{-\frac{3}{2}}\left\langle \tau\vec{e}_{1}\right\rangle^{-\frac{3}{2}} dsd\tau\;\;\;\;\;\;\;\;\;\;\;\;\;\;\;\;\;\;\\
\leq\epsilon CC(T)\left\langle t-t_{0}\right\rangle ^{-\frac{3}{2}}\;\;\;\;\;\;\;\;\;\;\;\;\;\;\;\;\;\;\; &  & .
\end{eqnarray*}
So when $A,\, B$ is large, we conclude that the coefficient satisfies the
bootstrap conditions.

For the second part,

\begin{eqnarray*}
\left\Vert \langle x-x(t)\rangle^{-\sigma}J_{L}u_{0}\right\Vert _{L^{2}\left(\left(0,T\right),L_{x}^{2}\right)}\;\;\;\;\;\;\;\;\;\;\;\;\;\;\;\;\;\;\;\;\;\;\;\;\;\;\;\;\;\;\;\;\;\;\;\;\;\;\;\;\;\;\;\;\;\;\;\;\;\;\;\;\;\;\;\;\;\;\;\;\;\\
\leq C\left\Vert \int_{t-A}^{t}ds\int_{s-B}^{B}\left\langle t-s\right\rangle ^{-\frac{3}{2}}\left\langle -\tau\vec{e}_{1}\right\rangle ^{-\frac{3}{2}}\left\Vert \langle x-\tau\vec{e_{1}}\rangle^{-\sigma}U(\tau,t_{0})P_{s}(t_{0})u_{0}\right\Vert \right\Vert \\
\leq CC(T)\sqrt{A}\left\Vert \left\langle -\tau\vec{e}_{1}\right\rangle ^{-\frac{3}{2}}\right\Vert _{L^{2}(s-B,s)}\left\Vert U(\tau,t_{0})P_{s}(t_{0})u_{0}\right\Vert _{L^{\infty}\left(\left(0,T\right),L_{x}^{2}\right)}\\
\leq CC(T)\sqrt{A\epsilon}\left\Vert u_{0}\right\Vert _{L^{2}}.
\end{eqnarray*}
Again, we know when $\epsilon$ is small, we recapture the bootstrap
conditions.

Finally, we need to check $J_{c,H}$. We will use the following version
of the Kato smoothing estimate, or we can apply a variant of Kato's smoothing
estimate from \cite{RSS}.
\begin{lem}
[\cite{LP}]\label{lem:kato} For $\sigma>\frac{1}{2}$, we have

\[
\intop_{\mathbb{R}}\left\Vert \left\langle x\right\rangle ^{-\sigma}\left\langle \vec{p}\right\rangle ^{-\frac{1}{2}}\nabla e^{-i\frac{1}{2}(s-\tau)\Delta}\right\Vert _{2\rightarrow2}^{2}d\tau\leq C_{\tau},
\]
we also have 
\[
\intop_{\mathbb{R}}\left\Vert \left\langle x\right\rangle ^{-\sigma}\left\langle \vec{p}\right\rangle ^{-\frac{1}{2}}\nabla e^{-i(s-\tau)H_{1}}P_{c}\left(H_{1}\right)\right\Vert _{2\rightarrow2}^{2}d\tau\leq C_{\tau,V_{1}}.
\]

\end{lem}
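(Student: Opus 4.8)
\emph{The free estimate.} The plan is to regard the first inequality as the classical Kato local smoothing estimate for the free Schr\"odinger group, so that there is essentially nothing to do beyond quoting \cite{LP}. First I would note that $\langle\vec p\rangle^{-1/2}\nabla$ has Fourier symbol $i\xi\langle\xi\rangle^{-1/2}$, which factors as $m(\xi)\,|\xi|^{1/2}$ with $m(\xi)=i\xi\,|\xi|^{-1/2}\langle\xi\rangle^{-1/2}$; here $|m(\xi)|\le 1$, and $m$ is homogeneous of degree $\tfrac12$ near $\xi=0$ and of degree $0$ near infinity, so $m(\vec p)$ is bounded on $L^{2}$ with a kernel decaying integrably off the diagonal, hence may be moved past the weight $\langle x\rangle^{-\sigma}$ at the price of an $L^{2}$-bounded error. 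It then suffices to prove the half-derivative smoothing bound for $\langle x\rangle^{-\sigma}|\vec p|^{1/2}e^{-\frac{i}{2}(s-\tau)\Delta}$ with $\sigma>\tfrac12$, which is precisely the local smoothing estimate of Constantin--Saut, Sj\"olin and Vega (equivalently the Kato--Yajima estimate) recorded in \cite{LP}; the constant is independent of $s$ by translation invariance in $\tau$.

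\emph{The perturbed estimate.} I would deduce this from the free one by a Duhamel iteration comparing $H_1$ with $H_0$. Writing $g=P_c(H_1)u$ and
\[
e^{-i(s-\tau)H_1}g=e^{-i(s-\tau)H_0}g-i\int_{\tau}^{s}e^{-i(s-r)H_0}\,V_1\,e^{-i(r-\tau)H_1}g\,dr,
\]
I would apply $\langle x\rangle^{-\sigma}\langle\vec p\rangle^{-1/2}\nabla$ and take the $L^{2}_{\tau}L^{2}_{x}$ norm. The free contribution is $\lesssim\|g\|_{L^{2}}\le\|u\|_{L^{2}}$ by the first part. For the Duhamel contribution I would use the retarded (inhomogeneous) form of the free smoothing estimate --- obtained from the homogeneous estimate, its dual, and the Christ--Kiselev lemma --- to bound it by $\|\langle x\rangle^{\sigma}V_1\,e^{-i(r-\tau)H_1}g\|_{L^{2}_{r}L^{2}_{x}}$; since $V_1$ decays exponentially, $\langle x\rangle^{\sigma}V_1\langle x\rangle^{\sigma}$ is a bounded multiplication operator, and the remaining factor $\|\langle x\rangle^{-\sigma}e^{-irH_1}P_c(H_1)g\|_{L^{2}_{r}L^{2}_{x}}\lesssim\|g\|_{L^{2}}$ is the zero-derivative local decay bound for $H_1$, a standard consequence of the limiting absorption principle and a special case of \eqref{eq:pwe2} with $x(t)\equiv0$. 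An alternative, closer to the proof of Lemma \ref{lem:pwe}, is to transfer the free estimate through the intertwining identity $e^{-itH_1}P_c(H_1)=W_1e^{-itH_0}W_1^{*}$, using the $L^{2}$- and $H^{1}$-boundedness --- hence, by interpolation, the $H^{1/2}$-boundedness --- of the wave operator $W_1$ \cite{Ya}.

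\emph{Main obstacle.} The only point requiring care in either route is the interplay between the nonlocal operator $\langle\vec p\rangle^{-1/2}\nabla$ and the spatial weight $\langle x\rangle^{-\sigma}$: in the free step one must justify moving $m(\vec p)$ past $\langle x\rangle^{-\sigma}$; in the Duhamel route one must check that the retarded smoothing estimate persists with the full gradient present; and in the wave-operator route one needs the mapping behaviour of $W_1$ on $H^{1/2}$. Each of these is classical, and the genuine analytic content is contained entirely in \cite{LP}.
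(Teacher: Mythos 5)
The paper does not prove this lemma: it is cited to \cite{LP}, and immediately afterward the author records (again without proof) the alternative form of the Kato smoothing bound from \cite{RSS}, treating the statement as classical background. Your proposal therefore does strictly more than the paper, and is in that sense a different route: you supply a derivation rather than a reference.

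Most of your sketch is sound, but the free-case reduction contains one genuine error. You factor the symbol as $i\xi\langle\xi\rangle^{-1/2}=m(\xi)|\xi|^{1/2}$ with $m(\xi)=i\xi|\xi|^{-1/2}\langle\xi\rangle^{-1/2}$, correctly note $|m|\le 1$, and then claim $m(\vec p)$ has ``a kernel decaying integrably off the diagonal'' so that it may be moved past the weight $\langle x\rangle^{-\sigma}$ ``at the price of an $L^2$-bounded error.'' That is false: at high frequency $m(\xi)\to i\xi/|\xi|$, the Riesz transform symbol, whose kernel decays only like $|z|^{-n}$ and is not integrable off the diagonal. Worse, even granting that $[\langle x\rangle^{-\sigma},m(\vec p)]$ is $L^2$-bounded, the remaining factor $|\vec p|^{1/2}e^{it\Delta/2}$ is not, and an ``$L^2$-bounded error'' would produce a divergent time integral. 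The fix is not hard, but it has to be said: $[\langle x\rangle^{-\sigma},m(\vec p)]$ is a pseudodifferential operator of order $-1$ with an extra power of weight decay, so $[\langle x\rangle^{-\sigma},m(\vec p)]\,|\vec p|^{1/2}$ is controlled by $\langle x\rangle^{-\sigma-1}$ times an $L^2$-bounded operator, and the resulting error is then dominated by the derivative-free local decay estimate. Cleaner still, and avoiding the commutator altogether, is to apply Kato's $H$-smoothness criterion directly to $A=\langle x\rangle^{-\sigma}\langle\vec p\rangle^{-1/2}\nabla$: the required uniform bound $\sup_{\operatorname{Im} z\neq 0}\|A(-\tfrac12\Delta-z)^{-1}A^{*}\|<\infty$ follows from the standard limiting absorption principle, since the symbol $|\xi|^2\langle\xi\rangle^{-1}$ is pointwise dominated by $|\xi|$.

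For the perturbed estimate, both of your routes are legitimate. The wave-operator route is the one closest in spirit to the paper's own treatment of the derivative-free Lemma~\ref{lem:pwe}; there, however, you should flag that the weight $\langle x\rangle^{-\sigma}$ and the gradient do not commute with $W_1$, so one either runs the Kato-smoothness/resolvent argument for $H_1$ directly (via the perturbed limiting absorption principle with one derivative) or uses the structure of $W_1-I$ as an operator with decaying kernel to absorb the weight. The Duhamel route with Christ--Kiselev and the zero-derivative local decay bound is also fine, and is essentially how the \cite{RSS} variant (which the paper also quotes, with a growing constant in $T$) is obtained.
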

We will use Lemma \ref{lem:kato}, but for the sake of completeness,
we formulate the result from \cite{RSS}.
\begin{lem*}
[\cite{RSS}] Let $H=-\frac{1}{2}\Delta+V$, $\left\Vert V\right\Vert _{\infty}<\infty$,
set $\psi(t)=e^{-itH}\psi_{0}$, then for all $T>1$ and $\alpha>0$,
we have 
\[
\sup_{x_{0}\in\mathbb{R}^{n}}\int_{0}^{T}\int_{\mathbb{R}^{n}}\frac{\left|\nabla\left\langle \nabla\right\rangle ^{-\frac{1}{2}}\psi(x,t)\right|^{2}}{\left(1+\left|x-x_{0}\right|^{\alpha}\right)^{\frac{1}{\alpha}+1}}dxdt\leq C_{\alpha,n}T\left(1+\left\Vert V\right\Vert _{\infty}\right)\left\Vert \psi_{0}\right\Vert _{L^{2}.}
\]

\end{lem*}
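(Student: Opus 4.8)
The plan is to establish this local smoothing bound by a positive commutator (Morawetz) argument with a \emph{bounded}, order-zero multiplier, and to treat $V$ as a bounded perturbation so that the terms it generates cost only a factor of $T$. Fix $x_{0}\in\mathbb{R}^{n}$ and take the radial convex weight $\phi(x)=f(|x-x_{0}|)$ determined by $f'(r)=r(1+r^{\alpha})^{-1/\alpha}$. A direct computation gives $f''(r)=(1+r^{\alpha})^{-\frac1\alpha-1}$ and $f'(r)/r=(1+r^{\alpha})^{-\frac1\alpha}$, so that the Hessian obeys
\[
D^{2}\phi(x)\ \geq\ \frac{1}{\left(1+|x-x_{0}|^{\alpha}\right)^{\frac1\alpha+1}}\,\mathrm{Id},
\]
exactly reproducing the weight in the statement; moreover $f'$ is bounded and all higher derivatives of $\phi$ are bounded uniformly in $x_{0}$ (after a harmless smoothing of $\phi$ near $x=x_{0}$). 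With $a:=\nabla\phi$, set $\mathcal{A}:=\langle\nabla\rangle^{-1/2}\,\mathrm{Re}(a\cdot\vec{p})\,\langle\nabla\rangle^{-1/2}$; the two factors $\langle\nabla\rangle^{-1/2}$ absorb the derivative in $\mathrm{Re}(a\cdot\vec{p})$, so $\mathcal{A}$ is a bounded self-adjoint operator on $L^{2}$ with $\|\mathcal{A}\|_{2\to2}\leq C_{\alpha,n}$, uniformly in $x_{0}$.

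Next I would differentiate $t\mapsto\langle\psi(t),\mathcal{A}\psi(t)\rangle$ along $\psi(t)=e^{-itH}\psi_{0}$ and integrate:
\[
\langle\psi(T),\mathcal{A}\psi(T)\rangle-\langle\psi_{0},\mathcal{A}\psi_{0}\rangle=\int_{0}^{T}\langle\psi,i[H_{0},\mathcal{A}]\psi\rangle\,dt+\int_{0}^{T}\langle\psi,i[V,\mathcal{A}]\psi\rangle\,dt .
\]
The left-hand side is bounded by $2\|\mathcal{A}\|_{2\to2}\|\psi_{0}\|_{L^{2}}^{2}\leq C_{\alpha,n}\|\psi_{0}\|_{L^{2}}^{2}$ since $e^{-itH}$ is unitary. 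For the potential term I would \emph{not} expand the commutator (that produces $a\cdot\nabla V$, uncontrolled for merely bounded $V$); instead $|\langle\psi,i[V,\mathcal{A}]\psi\rangle|\leq 2\|V\|_{\infty}\|\mathcal{A}\|_{2\to2}\|\psi_{0}\|_{L^{2}}^{2}$, so its time integral is $\leq C_{\alpha,n}\,T\|V\|_{\infty}\|\psi_{0}\|_{L^{2}}^{2}$ — this is exactly where the factor $T(1+\|V\|_{\infty})$ originates. For the free term, since $\langle\nabla\rangle^{-1/2}$ commutes with $H_{0}$ one has $i[H_{0},\mathcal{A}]=\langle\nabla\rangle^{-1/2}\,i[H_{0},\mathrm{Re}(a\cdot\vec{p})]\,\langle\nabla\rangle^{-1/2}$ with \emph{no} error terms, and the classical identity for $\langle u,i[H_{0},\mathrm{Re}(a\cdot\vec{p})]u\rangle$ — which equals $\int\nabla\bar u\cdot(D^{2}\phi)\cdot\nabla u\,dx$ plus a zeroth-order term with bounded coefficient proportional to $\Delta^{2}\phi$ (the Jacobian of $a=\nabla\phi$ being $D^{2}\phi\ge0$) — applied to $u=\langle\nabla\rangle^{-1/2}\psi$ gives
\[
\langle\psi,i[H_{0},\mathcal{A}]\psi\rangle\ \geq\ \int_{\mathbb{R}^{n}}\frac{\bigl|\nabla\langle\nabla\rangle^{-1/2}\psi\bigr|^{2}}{\left(1+|x-x_{0}|^{\alpha}\right)^{\frac1\alpha+1}}\,dx\ -\ C_{\alpha,n}\|\psi_{0}\|_{L^{2}}^{2},
\]
because $\Delta^{2}\phi$ is bounded. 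Integrating this over $[0,T]$, feeding in the two bounds above, and using $T\geq1$ yields the claimed estimate; every constant produced is translation invariant, so the bound is uniform in $x_{0}$.

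The step I expect to require the most care is the verification that $\mathcal{A}$ is $L^{2}$-bounded with constant depending only on $\alpha$ and $n$: one writes $\langle\nabla\rangle^{-1/2}a(x)\vec{p}\,\langle\nabla\rangle^{-1/2}=\bigl(\langle\nabla\rangle^{-1/2}a\langle\nabla\rangle^{1/2}\bigr)\bigl(\langle\nabla\rangle^{-1/2}\vec{p}\,\langle\nabla\rangle^{-1/2}\bigr)$, notes the second factor is a bounded Fourier multiplier, and handles the first via $\langle\nabla\rangle^{-1/2}a\langle\nabla\rangle^{1/2}=a+\langle\nabla\rangle^{-1/2}[a,\langle\nabla\rangle^{1/2}]$, where the commutator is bounded because $a$ has bounded derivatives of all orders — a small amount of pseudodifferential calculus, with all constants controlled by finitely many sup-norms $\|\partial^{\beta}a\|_{\infty}$, hence by $\alpha$ and $n$ alone. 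The choice and regularization of $\phi$ near $x=x_{0}$ (so that $a$ is genuinely smooth) and the verification that $\Delta^{2}\phi\in L^{\infty}$ are routine. Everything else is the standard conservation-of-$L^{2}$ plus positive-commutator scheme.
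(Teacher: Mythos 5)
Your proof is correct and is essentially the positive-commutator argument used in \cite{RSS} for this lemma (the present paper only cites it and gives no proof of its own): the choice $f'(r)=r(1+r^{\alpha})^{-1/\alpha}$ is made precisely so that $f''(r)=(1+r^{\alpha})^{-1/\alpha-1}$ reproduces the stated kernel while $f'$ stays bounded, the multiplier is made order zero by sandwiching $\mathrm{Re}(a\cdot\vec p)$ between two factors of $\langle\nabla\rangle^{-1/2}$, and treating $[V,\mathcal{A}]$ crudely as a bounded operator (rather than expanding the commutator) is exactly what produces the $T\left(1+\|V\|_{\infty}\right)$ factor. One small remark worth recording: by homogeneity the right-hand side must be $\|\psi_{0}\|_{L^{2}}^{2}$, as your computation in fact yields; the first power appearing in the displayed lemma is a typo.
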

Consider 
\begin{eqnarray*}
\int_{s-B}^{s}\left\Vert V_{1}F\left(\left|\vec{p}\right|\geq M\right)e^{-iH_{1}(s-\tau)}P_{c}\left(H_{1}\right)V_{2}(\cdot-\tau\vec{e_{1}})\right\Vert _{2\rightarrow2}d\tau\;\;\;\;\;\;\;\;\;\;\;\;\;\;\;\;\;\;\;\\
\leq\int_{s-B}^{s}\left\Vert \left[V_{1},F\left(\left|\vec{p}\right|\geq M\right)\right]e^{-iH_{1}(s-\tau)}P_{c}\left(H_{1}\right)V_{2}(\cdot-\tau\vec{e_{1}})\right\Vert _{2\rightarrow2}d\tau\\
+B^{\frac{1}{2}}M^{-\frac{1}{2}}\left(\int_{s-B}^{s}\left\Vert \left\langle \vec{p}\right\rangle ^{-\frac{1}{2}}\nabla V_{1}e^{-iH_{1}(s-\tau)}P_{c}\left(H_{1}\right)V_{2}(\cdot-\tau\vec{e_{1}})\right\Vert _{2\rightarrow2}d\tau\right)^{\frac{1}{2}}.
\end{eqnarray*}
By Young's inequality \cite{RSS,Cai}, 
\[
\left\Vert \left[V_{1},F\left(\left|\vec{p}\right|\geq M\right)\right]\right\Vert _{2\rightarrow2}\lesssim\frac{1}{M}.
\]
Also note that 
\[
\left\langle \vec{p}\right\rangle ^{-\frac{1}{2}}\nabla V_{1}=V_{1}\left\langle \vec{p}\right\rangle ^{-\frac{1}{2}}\nabla+\left[\left\langle \vec{p}\right\rangle ^{-\frac{1}{2}}\nabla,V_{1}\right],
\]
and 
\[
\left\Vert \left[\left\langle \vec{p}\right\rangle ^{-\frac{1}{2}}\nabla,V_{1}\right]\right\Vert _{2\rightarrow2}\lesssim1.
\]
So for the first estimate, with bootstrap assumption \eqref{eq:boot1},
we get 
\begin{eqnarray*}
\left\Vert \langle x-x_{0}\rangle^{-\sigma}J_{H}\langle x-x_{1}\rangle^{-\sigma}\right\Vert _{2\rightarrow2}\;\;\;\;\;\;\;\;\;\;\;\;\;\;\;\;\;\;\;\;\;\;\;\;\;\;\;\;\;\;\;\;\;\;\;\;\;\;\;\;\;\;\;\;\;\;\;\;\;\;\;\\
\lesssim C(T)\left\langle t-t_{0}\right\rangle ^{-\frac{3}{2}}\int_{t-A}^{t}\left\langle t-s\right\rangle ^{-\frac{3}{2}}\;\;\;\;\;\;\;\;\;\;\;\;\;\;\;\;\;\;\;\;\;\;\;\;\;\;\;\;\;\;\;\;\;\;\;\;\;\;\;\;\;\;\;\;\;\;\;\;\;\;\;\\
\times\int_{s-B}^{s}\left\Vert V_{1}F\left(\left|\vec{p}\right|\geq M\right)e^{-iH_{1}(s-\tau)}P_{c}\left(H_{1}\right)V_{2}(\cdot-\tau\vec{e_{1}})\right\Vert _{2\rightarrow2}d\tau ds\\
\lesssim C(T)\left\langle t-t_{0}\right\rangle ^{-\frac{3}{2}}\int_{t-A}^{t}\left\langle t-s\right\rangle ^{-\frac{3}{2}}\left(M^{-1}B+M^{-\frac{1}{2}}\sqrt{B}(1+\sqrt{B})\right)\\
\lesssim C(T)ABM^{-\frac{1}{2}}\left\langle t-t_{0}\right\rangle ^{-\frac{3}{2}}.
\end{eqnarray*}
For the other estimate,
\begin{eqnarray*}
\left\Vert \left\langle x-x(t)\right\rangle ^{-\sigma}J_{H}u_{0}\right\Vert _{L^{2}\left(\left(0,T\right),L_{x}^{2}\right)}\;\;\;\;\;\;\;\;\;\;\;\;\;\;\;\;\;\;\;\;\;\;\;\;\;\;\;\;\;\;\;\;\;\;\;\;\;\;\;\;\;\;\;\;\;\;\;\;\;\;\;\;\;\;\;\;\;\;\;\;\;\;\;\;\;\;\;\;\;\;\;\;\;\;\;\;\;\;\;\;\;\;\;\;\;\;\;\;\;\;\;\;\;\;\;\;\;\;\;\;\;\;\;\;\;\;\;\;\;\;\;\;\;\;\;\\
\lesssim\left\Vert \int_{t-A}^{t}\left\langle t-s\right\rangle ^{-\frac{3}{2}}\int_{s-B}^{s}\left\Vert V_{1}F\left(\left|\vec{p}\right|\geq M\right)e^{-iH_{1}(s-\tau)}P_{c}\left(H_{1}\right)V_{2}(\cdot-\tau\vec{e_{1}})\right\Vert _{2\rightarrow2}\left\Vert \left\langle x-\tau\vec{e_{1}}\right\rangle ^{-\sigma}U(\tau,t_{0})P_{s}(t_{0}u_{0}\right\Vert _{L_{x}^{2}}\right\Vert \;\;\;\;\;\;\;\;\;\;\;\;\;\;\\
\lesssim\left\Vert \int_{t-A}^{t}\left\langle t-s\right\rangle ^{-\frac{3}{2}}\int_{s-B}^{s}\left(M^{-1}+M^{-\frac{1}{2}}(B+\sqrt{B})\right)\left\Vert \langle x-\tau\vec{e_{1}}\rangle^{-\sigma}U(\tau,t_{0})P_{s}(t_{0})u_{0}\right\Vert \right\Vert _{L^{2}(0,T)}\;\;\;\;\;\;\;\;\;\;\;\;\;\;\;\;\;\;\;\;\;\;\;\;\;\;\\
\lesssim C(T)BM^{-\frac{1}{2}}\left\Vert u_{0}\right\Vert _{L^{2}}.\;\;\;\;\;\;\;\;\;\;\;\;\;\;\;\;\;\;\;\;\;\;\;\;\;\;\;\;\;\;\;\;\;\;\;\; &  & .
\end{eqnarray*}
So we can pick $M$ large, then the coefficient satisfies the bootstrap condition again.

To sum up, when we pick $A$, $B$ and $M$ large enough independent
of $T$, if we have for $t\in[t_{0},T]$
\begin{eqnarray*}
\left\Vert \langle x-x_{0}\rangle^{-\sigma}U(t,t_{0})P_{s}\langle x-x_{1}\rangle^{-\sigma}\right\Vert _{2\rightarrow2}\leq C(T)\frac{1}{\langle t-t_{0}\rangle^{\frac{3}{2}}},
\end{eqnarray*}
we can improve it to 
\begin{eqnarray*}
\left\Vert \langle x-x_{0}\rangle^{-\sigma}U(t,t_{0})P_{s}\langle x-x_{1}\rangle^{-\sigma}\right\Vert _{2\rightarrow2}\leq\frac{1}{2}C(T)\frac{1}{\langle t-t_{0}\rangle^{\frac{3}{2}}}+C\frac{1}{\langle t-t_{0}\rangle^{\frac{3}{2}}}.
\end{eqnarray*}
Therefore, we can make for $t\in[t_{0},T]$, 
\[
\left\Vert \langle x-x_{0}\rangle^{-\sigma}U(t,t_{0})P_{s}\langle x-x_{1}\rangle^{-\sigma}\right\Vert _{2\rightarrow2}\leq C,
\]
for some constant independent of $T$. So we conclude 
\[
\left\Vert \langle x-x_{0}\rangle^{-\sigma}U(t,t_{0})P_{s}\langle x-x_{1}\rangle^{-\sigma}\right\Vert _{2\rightarrow2}\leq C
\]
holds for arbitrary $t$ which shows Lemma \ref{lem:We1}. 

For the second part we proceed analogously. Indeed, if we suppose 
\begin{eqnarray*}
\int_{0}^{T}\left\Vert \left\langle x-x(t)\right\rangle ^{-\sigma}U(t,t_{0})P_{s}u\right\Vert _{L_{x}^{2}}^{2} dt& \leq & C^{2}(T)\left\Vert u\right\Vert _{L^{2},}^{2}
\end{eqnarray*}
then we can improve the estimate to 
\[
\int_{0}^{T}\left\Vert \left\langle x-x(t)\right\rangle ^{-\sigma}U(t,t_{0})P_{s}u\right\Vert _{L_{x}^{2}}^{2}dt\leq C\left\Vert u\right\Vert _{L^{2}}^{2}+\frac{1}{2}C^{2}(T)\left\Vert u\right\Vert _{L^{2}}^{2}.
\]
So we can obtain a bound for 
\[
\int_{0}^{T}\left\Vert \left\langle x-x(t)\right\rangle ^{-\sigma}U(t,t_{0})P_{s}u\right\Vert _{L_{x}^{2}}^{2}dt\leq C\left\Vert u\right\Vert _{L^{2}}^{2}
\]
which is independent of $T$. Therefore, we can send $T$ to $\infty$ above. Finally, we obtain
\[
\int_{0}^{\infty}\left\Vert \left\langle x-x(t)\right\rangle ^{-\sigma}U(t,t_{0})P_{s}u\right\Vert _{L_{x}^{2}}^{2}dt\leq C\left\Vert u\right\Vert _{L^{2}}^{2}
\]
which establishes Lemma \ref{lem:We2}.

\begin{rem}
With Theorem \ref{thm:dispersive}, one can show Lemma \ref{lem:We1}
easily as the free case. Set $s=t-t_{0}$, first, if $|s|\leq1$,
clearly by $\left\Vert U(t,t_{0})\right\Vert _{2\rightarrow2}\leq1$
and the integrability condition in $\mathbb{R}^{3}$, i.e. $\sigma>\frac{3}{2}$,
we can get the desired result. If $|s|\geq1$, we apply the dispersive
estimate for the free motion, by Young's inequality we get 
\[
\left\Vert \langle x-x_{0}\rangle^{-\sigma}U(t,t_{0})P_{s}(t_{0})\langle x-x_{1}\rangle^{-\sigma}\right\Vert _{2\rightarrow2}\lesssim\left\Vert \langle x\rangle^{-\sigma}\right\Vert _{L^{2}}^{2}\left\Vert U(t,t_{0})P_{s}(t_{0})\right\Vert _{1\rightarrow\infty},
\]
and from Theorem \ref{thm:dispersive},
\[
\left\Vert U(t,t_{0})P_{s}(t_{0})\right\Vert _{1\rightarrow\infty}\lesssim|t-t_{0}|^{-\frac{3}{2}}.
\]
But we proved Lemma \ref{lem:We1} together with Lemma \ref{lem:We2},
since the dispersive estimate might not be available in other contexts.
	
\end{rem}

\section{Boundedness of The Energy}\label{sec:energy}

In this section, we use Strichartz estimates to show that the energy of
the whole evolution of the charge transfer model is bounded independently
of time. The asymptotic completeness of the Hamiltonian shown
in \cite{RSS} will be used. We will still consider the model with two potentials
as in the previous section.

\begin{proof}[Proof of Theorem \ref{thm:energy}]
From Theorem \ref{thm:ac}, we can write the evolution
as: for some $\phi_{0}\in L^{2}\left(\mathbb{R}^{3}\right)$, 
\begin{equation}
U(t,0)\psi_{0}=\sum_{r=1}^{m}A_{r}e^{-i\lambda_{r}t}u_{r}+\sum_{s=1}^{\ell}B_{s}e^{-i\mu_{s}t}\mathfrak{g}_{-\overrightarrow{e_{1}}}(t)w_{s}+e^{-it\frac{\Delta}{2}}\phi_{0}+R(t)\label{eq:31}
\end{equation}
where $\mathfrak{g}$ is the Galilei transformation. It is trivial
to see the part associated with bound states and moving bound states,
\begin{equation}
\sum_{r=1}^{m}A_{r}e^{-i\lambda_{r}t}w_{r}+\sum_{s=1}^{\ell}B_{s}e^{-i\mu_{s}t}\mathfrak{g}_{-\overrightarrow{e_{1}}}(t)u_{s}\label{eq:33}
\end{equation}
has bounded energy. 
Indeed, to be more precise, we have 
\[
\left\Vert \sum_{r=1}^{m}A_{r}e^{-i\lambda_{r}t}w_{r}+\sum_{s=1}^{\ell}B_{s}e^{-i\mu_{s}t}\mathfrak{g}_{-\overrightarrow{e_{1}}}(t)u_{s}\right\Vert _{H^{1}}\lesssim\sum_{r=1}^{m}\left\Vert w_{r}\right\Vert _{H^{1}}\left\Vert \psi_{0}\right\Vert_{L^{2}} +\sum_{s=1}^{\ell}\left\Vert u_{s}\right\Vert _{H^{1}}\left\Vert \psi_{0}\right\Vert_{L^{2}}.
\]
So it suffices to consider 
\begin{equation}
\psi(t):=U(t,0)\psi_{0}=e^{-it\frac{\Delta}{2}}\phi_{0}+R(t)\label{eq:34}
\end{equation}
where 
\[
\left\Vert R(t)\right\Vert _{L^{2}}\rightarrow0,\,\, t\rightarrow\pm\infty.
\]
In other words, we might assume 
\[
P_s(t)\psi(t)=\psi(t).
\]
Rewrite the equation,
\begin{equation}
i\psi_{t}+\frac{\Delta}{2}\psi=V_{1}\psi+V_{2}(\cdot-t\vec{e_{1}})\psi.\label{eq:35}
\end{equation}
We can differentiate the equation \eqref{eq:35} and set 
\[
v=\partial_{x_{1}}\psi=:\partial_{1}\psi,
\]
then $v$ satisfies 
\begin{equation}
iv_{t}+\Delta v-V_{1}v-V_{2}(\cdot-t\vec{e_{1}})v=\partial_{1}V_{1}\psi+\partial_{1}V_{2}(\cdot-t\vec{e_{1}})\psi.\label{eq:36}
\end{equation}
Again, it suffices to consider $\psi$ is in the scattering space. Since other components are easily to be bounded. To see this, we look at 
\[
\left\langle v,w_{r}\right\rangle _{L^{2}}=-\left\langle e^{-it\frac{\Delta}{2}}\phi_{0}+R(t),\partial_{x_{1}}w_{r}\right\rangle _{L^{2}}.
\]
By the asymptotic completeness result, we know 
\[
\left\Vert R(t)\right\Vert _{L^{2}}\rightarrow0,\,\, t\rightarrow\pm\infty.
\]
In particular, we know 
\[
\left\Vert R(t)\right\Vert _{L^{2}}\lesssim C,
\]
so 
\[
\left|\left\langle R(t),\partial_{x_{1}}w_{r}\right\rangle _{L^{2}}\right|\lesssim\left\Vert R(0)\right\Vert _{L^{2}}\lesssim\left\Vert \psi_{0}\right\Vert _{L^{2}}
\]
since from Agmon's estimate, $\partial_{x_{1}}w_{r}$ is still exponentially
decaying. 

Notice that 

\[
\left|\left\langle e^{-it\frac{\Delta}{2}}\phi_{0},\partial_{x_{1}}w_{r}\right\rangle _{L^{2}}\right|\rightarrow0,\,t\rightarrow\infty,
\]
since we can approximate $\phi_{0}$ by $\phi_{n}\in L^{2}\cap L^{1}$
in $L^{2}$ and then by the dispersive estimate for the free equation 
\[
\left\Vert e^{-it\frac{\Delta}{2}}\phi_{n}\right\Vert _{L^{\infty}}\lesssim\frac{1}{\left|t\right|^{\frac{3}{2}}}\left\Vert \phi_{n}\right\Vert _{L^{1}}.
\]
A similar discussion holds for $u_{s}$, we can conclude that 
\[
\left\Vert P_{b}\left(H_{1}\right)v(t)\right\Vert _{L^{2}}+\left\Vert P_{b}\left(H_{2},t\right)v(t)\right\Vert _{L^{2}}\rightarrow0,\,\,\, t\rightarrow\pm\infty,
\]
and 
\[
\left\Vert P_{b}\left(H_{1}\right)v(t)\right\Vert _{L^{2}}+\left\Vert P_{b}\left(H_{2},t\right)v(t)\right\Vert _{L^{2}}\lesssim\left\Vert \psi_{0}\right\Vert _{L^{2}.}
\]
By the above argument, we can actually conclude that $v$ is asymptotically orthogonal
to the bound states of $H_{1}$ and moving bound states associated
to $H_{2}(t)$. We can in fact obtain an explicit rate of decay for the term
\[
\left\Vert P_{b}\left(H_{1}\right)v(t)\right\Vert _{L^{2}}+\left\Vert P_{b}\left(H_{2},t\right)v(t)\right\Vert _{L^{2}}
\]
goes to $0$, but it is enough for our purposes  to know that it is just bounded by $\left\Vert \psi_{0}\right\Vert _{H^{1}}$.

Then by Proposition \ref{prop:convrate}, 
\[
\left\Vert (1-P_s(t)v(t)\right\Vert _{L^{2}}\lesssim \left\Vert \psi_{0}\right\Vert _{H^{1}}.
\]
Therefore, it is sufficient to estimate 
\[
\left\Vert P_s(t)v(t)\right\Vert _{L^{2}}
\]
and hence, without loss of generality, we assume 
\[
P_s(t)v(t).
\]
We do a similar argument as the proof for Strichartz
estimates, Theorem \ref{thm:Strich}.

Setting $F(x,t)=\partial_{1}V_{1}\psi+\partial_{1}V_{2}(\cdot-t\vec{e_{1}})\psi$,
we  can write \eqref{eq:36} in the form 
\[
iv_{t}+\frac{\Delta}{2}v=V_{1}v+V_{2}(\cdot-t\vec{e_{1}})v+F(x,t).
\]
By the endpoint Strichartz for the free Sch\"odinger equation,
we obtain
\[
\left\Vert v\right\Vert _{L_{t}^{p}L_{x}^{q}}\leq C\left\Vert V_{1}v+V_{2}(\cdot-t\vec{e_{1}})v+F\right\Vert _{L_{t}^{2}L_{x}^{\frac{6}{5}}}
\]
\begin{eqnarray*}
\left\Vert V_{1}v+V_{2}(\cdot-t\vec{e_{1}})v+F\right\Vert _{L_{t}^{2}L_{x}^{\frac{6}{5}}} & \leq & \left\Vert V_{1}v\right\Vert _{L_{t}^{2}L_{x}^{\frac{6}{5}}}+\left\Vert V_{2}(\cdot-t\vec{e_{1}})v\right\Vert _{L_{t}^{2}L_{x}^{\frac{6}{5}}}+\left\Vert F\right\Vert _{L_{t}^{2}L_{x}^{\frac{6}{5}}},\\
\left\Vert V_{1}v\right\Vert _{L_{t}^{2}L_{x}^{\frac{6}{5}}} & \leq & C_{V}\left\Vert \left\langle x\right\rangle ^{-m}v\right\Vert _{L_{t}^{2}L_{x}^{2}},\\
\left\Vert V_{2}(\cdot-t\vec{e_{1}})v\right\Vert _{L_{t}^{2}L_{x}^{\frac{6}{5}}} & \leq & C_{V}\left\Vert \left\langle x-t\vec{e_{1}}\right\rangle ^{-m}v\right\Vert _{L_{t}^{2}L_{x}^{2}}.
\end{eqnarray*}
So it suffices to estimate 
\[
\left\Vert \left\langle x-x(t)\right\rangle ^{-m}v\right\Vert _{L_{t}^{2}L_{x}^{2}}
\]
for $x(t)$ a smooth curve in $\mathbb{R}^{3}$.

By Duhamel's formula, 
\[
v(t)=e^{i\frac{1}{2}\Delta t}v_{0}+i\int_{0}^{t}e^{i\frac{1}{2}\Delta(t-s)}\left(V_{1}+V_{2}(\cdot-s\vec{e_{1}})\right)v(s)\,ds+i\int_{0}^{t}e^{i\frac{1}{2}\Delta(t-s)}F(x,s)\,ds.
\]
We write 
\[
v(t)=U_{1}+iU_{2}+iU_{3}.
\]
Certainly, it is easy to bound $U_{1}$ as Lemmas \ref{lem:FWe1} and
\ref{lem:Fwe2}. 

Next we bound $U_{3}$. We again apply H\"older's inequality and the endpoint
Strichartz estimate, 
\begin{eqnarray*}
\left\Vert \left\langle x-x(t)\right\rangle ^{-m}\int_{t_{0}}^{t}e^{i\frac{1}{2}\Delta(t-s)}F(x,s)\,ds\right\Vert _{L_{t}^{2}L_{x}^{2}} & \leq & \left\Vert \int_{t_{0}}^{t}e^{i\frac{1}{2}\Delta(t-s)}F(x,s)\,ds\right\Vert _{L_{t}^{2}L_{x}^{6}}\\
 & \leq & \left\Vert F\right\Vert _{L_{t}^{2}L_{x}^{\frac{6}{5}}}.
\end{eqnarray*}
It remains to bound 
\[
\left\Vert \int_{0}^{t}e^{i\frac{1}{2}\Delta(t-s)}\left(V_{1}+V_{2}(\cdot-s\vec{e_{1}})\right)v(s)\,ds\right\Vert _{L_{t}^{2}L_{x}^{2}}.
\]
Rewrite 
\[
v(s)=U(s,0)v_{0}+i\int_{0}^{s}U(s,\tau)F(x,\tau)\,d\tau
\]
By our assumption:
\[
v(s)=P_s(s)U(s,0)v_{0}+i\int_{0}^{s}P_s(s)U(s,\tau)F(x,\tau)\,d\tau.
\]
By Lemma \ref{lem:We1}, Lemma \ref{lem:We2}, we have
\begin{eqnarray*}
\left\Vert \left\langle x-x(t)\right\rangle ^{-m}\int_{0}^{t}e^{i\frac{1}{2}\Delta(t-s)}\left(V_{1}+V_{2}(\cdot-s\vec{e_{1}})\right)P_s(s)U(s,0)v_{0}\,ds\right\Vert _{L_{t}^{2}L_{x}^{2}}\;\;\;\;\;\;\;\;\;\;\;\;\;\;\;\;\;\;\;\;\;\;\\
\lesssim\left\Vert \left\langle x\right\rangle ^{-\beta}P_s(s)U(t,0)v_{0}\right\Vert _{L_{t}^{2}L_{x}^{2}}+\left\Vert \left\langle x-t\vec{e_{1}}\right\rangle ^{-\beta}P_s(s)U(t,0)v_{0}\right\Vert _{L_{t}^{2}L_{x}^{2}}\\
\lesssim\left\Vert v_{0}\right\Vert _{L^{2}}.\;\;\;\;\;\;\;\;\;\;\;\;\;\;\;\;\;\;\;\;\;\;\;\;\;\;\;\;\;\;\;\;\;\;\;
\end{eqnarray*}
Also, we can get 

\begin{eqnarray*}
\left\Vert \left\langle x-x(t)\right\rangle ^{-m}\int_{0}^{t}e^{i\frac{1}{2}\Delta(t-s)}\left(V_{1}+V_{2}(\cdot-s\vec{e_{1}})\right)\int_{0}^{s}P_s(s)U(s,\tau)F(x,\tau)\,d\tau ds\right\Vert _{L_{t}^{2}L_{x}^{2}}\\
\lesssim\left\Vert \int_{0}^{t}\left\langle t-s\right\rangle ^{-\frac{3}{2}}\int_{0}^{s}\left\langle s-\tau\right\rangle ^{-\frac{3}{2}}\left\Vert \left\langle x\right\rangle ^{\alpha}F(\tau)\right\Vert _{L_{x}^{2}}d\tau ds\right\Vert _{L_{t}^{2}}\\
\lesssim\left\Vert \int_{0}^{t}\left\langle t-\tau\right\rangle ^{-\frac{3}{2}}\left\Vert \left\langle x\right\rangle ^{\alpha}F(\tau)\right\Vert _{L_{x}^{2}}d\tau\right\Vert \\
\leq\left\Vert \left\langle x\right\rangle ^{\alpha}F(\tau)\right\Vert _{L_{t}^{2}L_{x}^{2}}. &  & 
\end{eqnarray*}
So we have shown that
\[
\left\Vert v\right\Vert _{L_{t}^{p}L_{x}^{q}}\leq C\left\Vert V_{1}v+V_{2}(\cdot-t\vec{e_{1}})v+F\right\Vert _{L_{t}^{2}L_{x}^{\frac{6}{5}}}\lesssim\left\Vert \psi_{0}\right\Vert _{L^{2}}+\left\Vert \left\langle x\right\rangle ^{\alpha}F\right\Vert _{L_{t}^{2}L_{x}^{2}}+\left\Vert F\right\Vert _{L_{t}^{2}L_{x}^{\frac{6}{5}}}
\]
for any  Schr\"odinger admissible pair $(p,q)$.

Plugging in $F=\partial_{1}V_{1}\psi+\partial_{1}V_{2}(\cdot-t\vec{e_{1}})\psi$,
it is easy to estimate 
\[
\left\Vert \left\langle x\right\rangle ^{\alpha}F\right\Vert _{L_{t}^{2}L_{x}^{2}}+\left\Vert F\right\Vert _{L_{t}^{2}L_{x}^{\frac{6}{5}}}\lesssim\left\Vert \psi_{0}\right\Vert _{L^{2}}.
\]
For the second piece, we use 
\begin{eqnarray*}
\left\Vert \partial_{1}V_{1}\psi+\partial_{1}V_{2}(\cdot-t\vec{e_{1}})\psi\right\Vert _{L_{t}^{2}L_{x}^{\frac{6}{5}}} & \leq & \left\Vert \partial_{1}V_{1}\psi\right\Vert _{L_{t}^{2}L_{x}^{\frac{6}{5}}}+\left\Vert \partial_{1}V_{2}(\cdot-t\vec{e_{1}})\psi\right\Vert _{L_{t}^{2}L_{x}^{\frac{6}{5}}}\\
\left\Vert \partial_{1}V_{1}\psi\right\Vert _{L_{t}^{2}L_{x}^{\frac{6}{5}}} & \leq & C_{V}\left\Vert \left\langle x\right\rangle ^{-m}\psi\right\Vert _{L_{t}^{2}L_{x}^{2}}\lesssim\left\Vert \psi_{0}\right\Vert _{L^{2}}\\
\left\Vert \partial_{1}V_{2}(\cdot-t\vec{e_{1}})\psi\right\Vert _{L_{t}^{2}L_{x}^{\frac{6}{5}}} & \leq & C_{V}\left\Vert \left\langle x-t\vec{e_{1}}\right\rangle ^{-m}\psi\right\Vert _{L_{t}^{2}L_{x}^{2}}\lesssim\left\Vert \psi_{0}\right\Vert _{L^{2}}.
\end{eqnarray*}
For the first piece, by H\"older's inequality, we have
\[
\left\Vert \left\langle x\right\rangle ^{\alpha}\left(\partial_{1}V_{1}\psi+\partial_{1}V_{2}(\cdot-t\vec{e_{1}})\psi\right)\right\Vert _{L_{x}^{2}}\lesssim\left\Vert \psi\right\Vert _{L_{x}^{6}.}
\]
Then applying the endpoint Strichart estimate to $\psi$ by Theorem \ref{thm:Strich},
we get 
\[
\left\Vert \left\langle x\right\rangle ^{\alpha}\left(\partial_{1}V_{1}\psi+\partial_{1}V_{2}(\cdot-t\vec{e_{1}})\psi\right)\right\Vert _{L_{t}^{2}L_{x}^{2}}\lesssim\left\Vert \psi\right\Vert _{L_{t}^{2}L_{x}^{6}}\lesssim\left\Vert \psi_{0}\right\Vert _{L_{x}^{2}}.
\]
So in particular, we infer that 
\begin{equation}
\left\Vert v\right\Vert _{L_{t}^{\infty}L_{x}^{2}}\lesssim\left\Vert v_{0}\right\Vert _{L^{2}}+\left\Vert \psi_{0}\right\Vert _{L^{2}}=\left\Vert \psi_{0}\right\Vert _{H^{1}}.\label{eq:38}
\end{equation}
The same argument applies to all other partial derivatives of $\psi$.
So we can conclude that 
\begin{equation}
\sup_{t\in\mathbb{R}}\left\Vert \psi(t)\right\Vert _{H^{1}}\lesssim\left\Vert \psi_{0}\right\Vert _{H^{1}}.\label{eq:39}
\end{equation}
The theorem is proved
\end{proof}
By a simple inductive argument, we obtain the following corollary:
\begin{cor}
For $\psi_{0}\in H^{k}\left(\mathbb{R}^{3}\right)$ where $k$ is a non-negative integer, then
\begin{equation}
\sup_{t\in\mathbb{R}}\left\Vert U(t,0)\psi_{0}\right\Vert _{H^{k}}\leq C\left\Vert \psi_{0}\right\Vert _{H^{k}}.\label{eq:310}
\end{equation}

\end{cor}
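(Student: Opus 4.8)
The plan is to argue by induction on $k$, strengthening the statement so that it simultaneously carries Strichartz bounds for all derivatives. Concretely, I would establish the hypothesis $P(k)$: for every $\psi_{0}\in H^{k}(\mathbb{R}^{3})$, writing $\psi=U(t,0)\psi_{0}$, for every multi-index $\gamma$ with $|\gamma|\le k$ and every Schr\"odinger admissible pair $(p,q)$,
\[
\left\Vert \partial^{\gamma}\psi\right\Vert _{L_{t}^{p}([0,\infty),L_{x}^{q})}+\sup_{t\ge 0}\left\Vert \partial^{\gamma}\psi(t)\right\Vert _{L^{2}}\le C\left\Vert \psi_{0}\right\Vert _{H^{|\gamma|}}.
\]
The case $P(0)$ is the $L^{2}$ unitarity of $U(t,0)$ together with Theorem \ref{thm:Strich}, and $P(1)$ is exactly Theorem \ref{thm:energy} together with the Strichartz bound on $v=\partial_{x_{j}}\psi$ produced inside its proof. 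As in Theorem \ref{thm:energy}, the bound-state components of $\partial^{\gamma}\psi$ are harmless (they reduce to $\|\psi_0\|_{H^{|\gamma|}}$ times $H^{1}$-norms of the bound states, which by Agmon's estimate stay exponentially decaying along with all their derivatives), so throughout we may assume $\psi_0$ lies in the scattering space.

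For the inductive step, assume $P(k-1)$ and fix $|\alpha|=k$. Put $v=\partial^{\alpha}\psi$; differentiating \eqref{eq:35} and using the Leibniz rule, $v$ solves
\[
iv_{t}+\tfrac{1}{2}\Delta v=V_{1}v+V_{2}(\cdot-t\vec{e_{1}})v+F,\qquad F=\sum_{0\neq\beta\le\alpha}\binom{\alpha}{\beta}\Big[(\partial^{\beta}V_{1})\,\partial^{\alpha-\beta}\psi+(\partial^{\beta}V_{2})(\cdot-t\vec{e_{1}})\,\partial^{\alpha-\beta}\psi\Big].
\]
Since the $V_{j}$ are smooth and exponentially decaying, so is each $\partial^{\beta}V_{j}$, and every derivative of $\psi$ occurring in $F$ has order $|\alpha-\beta|\le k-1$. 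Hence, exactly as in the proof of Theorem \ref{thm:energy} (H\"older in $x$, trading the rapid decay of $\partial^{\beta}V_{j}$ against the weights $\langle x\rangle$ and $\langle x-t\vec{e_{1}}\rangle$), the inductive hypothesis $P(k-1)$ yields, for $m>\tfrac32$,
\[
\left\Vert \langle x\rangle^{m}F\right\Vert _{L_{t}^{2}L_{x}^{2}}+\left\Vert \langle x-t\vec e_1\rangle^{m}F\right\Vert _{L_{t}^{2}L_{x}^{2}}+\left\Vert F\right\Vert _{L_{t}^{2}L_{x}^{6/5}}\lesssim\sum_{|\gamma|\le k-1}\left\Vert \partial^{\gamma}\psi\right\Vert _{L_{t}^{2}L_{x}^{6}}\lesssim\left\Vert \psi_{0}\right\Vert _{H^{k-1}}.
\]

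Next I would verify, as in Theorem \ref{thm:energy}, that $v$ is asymptotically orthogonal to the bound states of $H_{1}$ and to the moving bound states of $H_{2}(t)$, with the non-scattering part controlled by $\|\psi_{0}\|_{H^{k}}$: writing the scattering part as $e^{-it\frac{\Delta}{2}}\phi_{0}+R(t)$, integration by parts moves $\partial^{\alpha}$ onto the normalized bound states, the free piece $\langle e^{-it\frac{\Delta}{2}}\phi_{0},\partial^{\alpha}w_{r}\rangle\to0$ by the free dispersive estimate, and the $R$-piece is bounded by $\|R(0)\|_{L^{2}}\lesssim\|\psi_{0}\|_{L^{2}}$; so $\|(1-P_{s}(t))v(t)\|_{L^{2}}\lesssim\|\psi_{0}\|_{H^{k}}$ and it suffices to estimate $P_{s}(t)v(t)$. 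Now one runs the Duhamel argument of Theorem \ref{thm:energy} verbatim: expand $v$ by Duhamel, bound the free term by Lemmas \ref{lem:FWe1}--\ref{lem:Fwe2}, the inhomogeneous term driven by $F$ by the endpoint Strichartz estimate and the bound on $F$ above, and the potential term by inserting $v(s)=P_{s}(s)U(s,0)v_{0}+i\int_{0}^{s}P_{s}(s)U(s,\tau)F(\tau)\,d\tau$ and applying Lemmas \ref{lem:We1} and \ref{lem:We2}. This produces $\|v\|_{L_{t}^{p}L_{x}^{q}}\lesssim\|v_{0}\|_{L^{2}}+\|\langle x\rangle^{m}F\|_{L_{t}^{2}L_{x}^{2}}+\|F\|_{L_{t}^{2}L_{x}^{6/5}}\lesssim\|\psi_{0}\|_{H^{k}}$ for every admissible $(p,q)$; taking $(p,q)=(\infty,2)$ and summing over $|\alpha|=k$ closes $P(k)$, and \eqref{eq:310} is the $L_{t}^{\infty}L_{x}^{2}$ statement of $P(k)$.

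The only genuinely new point compared with Theorem \ref{thm:energy}, and the step I expect to require the most care, is making sure the inductive hypothesis is phrased strongly enough to absorb $F$: one needs Strichartz estimates (hence also the weighted space-time control, which follows from them by H\"older) for \emph{all} derivatives of order $\le k-1$, not merely the energy bound, and one must check that the decay margin $\sigma>\tfrac32$ in Lemmas \ref{lem:We1}--\ref{lem:We2} is available uniformly in $k$ — which it is, since each $\partial^{\beta}V_{j}$ decays as fast as required. Everything else is a routine repetition of the estimates of Sections \ref{sec:Strich} and \ref{sec:energy}.
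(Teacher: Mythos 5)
Your proposal is essentially correct and fills in the details the paper leaves implicit (the paper merely invokes ``a simple inductive argument''). The structure --- differentiate equation \eqref{eq:35} $k$ times, apply the Leibniz rule to produce a source $F$ involving derivatives of order at most $k-1$, bound $F$ via the inductive Strichartz estimate, and then run the Duhamel argument of Theorem~\ref{thm:energy} verbatim for $v=\partial^{\alpha}\psi$ --- is exactly the right way to carry out the induction, and strengthening the hypothesis to carry both the $L_{t}^{\infty}L_{x}^{2}$ and the full Strichartz bounds at each order is the correct way to make it close.

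Two small points of hygiene, neither of which is a genuine gap. First, the inductive hypothesis $P(k)$ as you state it cannot hold for \emph{all} $\psi_{0}\in H^{k}$: the Strichartz bound $\left\Vert \partial^{\gamma}\psi\right\Vert _{L_{t}^{p}L_{x}^{q}}<\infty$ with $p<\infty$ fails on the bound-state component (for instance $e^{-i\lambda_{r}t}w_{r}$ has constant $L_{x}^{q}$ norm and so infinite $L_{t}^{p}L_{x}^{q}$ norm for $p<\infty$). You do acknowledge this by saying one may assume $\psi_{0}$ lies in the scattering space, but $P(k)$ should then be \emph{stated} for scattering data, with the reduction for general $\psi_{0}$ (via Theorem~\ref{thm:ac} and the trivial $H^{k}$ bound on the bound-state part, since $w_{r},u_{s}$ are smooth and exponentially decaying) carried out as a separate first step, exactly as in the proof of Theorem~\ref{thm:energy}. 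Second, the phrase ``$\left\Vert \psi_{0}\right\Vert _{H^{|\gamma|}}$ times $H^{1}$-norms of the bound states'' should read ``$\left\Vert \psi_{0}\right\Vert _{L^{2}}$ times $H^{|\gamma|}$-norms of the bound states''; since all Sobolev norms of the bound states are finite, the conclusion is unaffected. With these touch-ups the argument is complete.
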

\begin{rem*}
As a concluding remark, we notice that we proved the boundedness of the energy based on Strichartz estimates
and the asymptotic completeness of the Hamiltonian. In \cite{Graf}, Graf proved the asymptotic completeness based on
the boundedness of the energy. So, we can see, modulo some technical assumptions on the spectrum of the Sch\"odinger operator, the
boundedness of the energy is equivalent to the asymptotic completeness of the Hamiltonian. Also note that the asymptotic completeness can be also proved by the dispersive estimate as in \cite{RSS}.  
\end{rem*}

\section{Matrix Charge Transfer Models}\label{sec:matrix}

In this section, we extend our above results to matrix charge transfer
models in $\mathbb{R}^{3}$ similarly as the work in \cite{RSS}.
For the sake of completeness, we start from the basic definitions
following \cite{RSS}.
\begin{defn}
\label{Matrix}By a matrix charge transfer model we mean a system
\[
\frac{1}{i}\partial_{t}\vec{\psi}+\left(\begin{array}{cc}
-\frac{1}{2}\Delta & 0\\
0 & \frac{1}{2}\Delta
\end{array}\right)\vec{\psi}+\sum_{_{j=1}}^{v}V_{j}\left(\cdot-\vec{v_{j}}t\right)\vec{\psi}=0,\,\,\,\vec{\psi}|_{t=0}=\vec{\psi}_{0}
\]
where $\vec{v_{j}}$ are distinct vectors in $\mathbb{R}^{3}$, and
$V_{j}$ are matrix potentials of the form 
\[
V_{j}(t,x)=\left(\begin{array}{cc}
U_{j}(x) & -e^{i\theta_{j}(t,x)}W_{j}(x)\\
e^{-i\theta_{j}(t,x)}W_{j}(x) & -U_{j}(x)
\end{array}\right),
\]
where $\theta_{j}(t,x)=\left(\left|\vec{v_{j}}\right|^{2}+\alpha_{j}^{2}\right)t+2x\cdot\vec{v_{j}}+\gamma_{j}$
with $\alpha_{j}, \gamma_{j}\in\mathbb{R}$ and $\alpha_{j}\neq0$.
Furthermore, we require that each 
\[
H_{j}=\left(\begin{array}{cc}
-\frac{1}{2}\Delta+\frac{1}{2}\alpha_{j}^{2}+U_{j} & -W_{j}\\
W_{j} & \frac{1}{2}\Delta-\frac{1}{2}\alpha_{j}^{2}-U_{j}
\end{array}\right)
\]
satisfies the admissible conditions (Definition\ref{Admissible})
and stability condition (Definition \ref{Stability}) defined below.
\end{defn}
Here we give the definitions of stability condition and admissible
conditions for a matrix Hamiltonian $A=B+V$ where 
\[
B=\left(\begin{array}{cc}
-\frac{1}{2}\Delta+\mu & 0\\
0 & \frac{1}{2}\Delta-\mu
\end{array}\right),\,\,\, V=\left(\begin{array}{cc}
U & -W\\
W & -U
\end{array}\right)
\]
with $\mu>0$ and $U,\, W$ are of real-valued.
\begin{defn}
\label{Admissible}Let $A=B+V$ as above with $V$ exponentially decaying.
We call the operator $A$ on $\mathcal{H}:=L^{2}\left(\mathbb{R}^{3}\right)\times L^{2}\left(\mathbb{R}^{3}\right)$
admissible provided the following hold:
\end{defn}
1. $spec(A)\subset\mathbb{R}$ and $spec(A)\cap(-\mu,\mu)=\left\{ \omega_{\ell}:\,0\leq\ell\leq M\right\} $,
where $\omega_{0}=0$ and all $\omega_{j}$ are distinct eigenvalues.
There are no eigenvalues in $spec_{ess}(A)$.

2. For $1\leq\ell\leq M$, $L_{\ell}:=ker\left(A-\omega_{\ell}\right)^{2}=ker\left(A-\omega_{\ell}\right)$
and $ker\left(A\right)\subsetneq ker\left(A^{2}\right)=ker\left(A^{3}\right)=:L_{0}$.
Moreover, these spaces are finite-dimensional.

3. The ranges $Ran\left(A-\omega_{\ell}\right)$, for $1\leq\ell\leq M$
and $Ran\left(A^{2}\right)$ are closed.

4. The spaces $L_{\ell}$ are spanned by exponentially decreasing
functions in $\mathcal{H}$ (say, with bound $e^{-\epsilon_{0}\left|x\right|}$).

5. All these assumptions hold as well for the adjoint $A^{*}$. We
denote the corresponding (generalized) eigenspaces by $L_{\ell}^{*}$.

6. The points $\pm\mu$ are not resonances of $A$.
\begin{rem*}
For detailed definition of resonance here, one can find it in \cite{RSS}
Remark 7.10.
\end{rem*}
Following the above admissible conditions for $A$, we have can define
analogous projections onto continuous spectrum and point spectrum
following \cite{RSS} Lemma 7.3.
\begin{lem*}[\cite{RSS}, Lemma 7.3] 
	There a direct sum decomposition

\[
\mathcal{H}=\sum_{j=1}^{M}L_{j}+\left(\sum_{j=1}^{M}L_{j}^{*}\right)^{\perp}.
\]
The decomposition is invariant under $A$. Let $P_{c}$ denote the
projection onto $\left(\sum_{j=1}^{M}L_{j}^{*}\right)^{\perp}$ and
set $P_{b}=Id-P_{c}$. Notice that here $P_{c}$ is not an orthogonal projection.  It is easy to see $AP_{c}=P_{c}A$, and there exist numbers
$c_{ij}$ such that 
\[
P_{b}=\sum_{i,j}\phi_{j}c_{ij}\left\langle f,\psi_{i}\right\rangle ,\,\,\forall f\in\mathcal{H}
\]
 where $\phi_{j}$ and $\psi_{i}$ are exponentially decreasing functions.\end{lem*}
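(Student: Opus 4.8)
The plan is to obtain the decomposition from Riesz spectral projections for the (non-self-adjoint, unbounded) operator $A$. By admissibility condition~1 each $\omega_\ell$ is an isolated point of $\mathrm{spec}(A)$; by conditions~2 and~3 the algebraic eigenspace at $\omega_\ell$ is finite dimensional and $(A-\omega_\ell)$ (respectively $A^2$) has closed range, so $\omega_\ell$ is a pole of finite order of the resolvent $(z-A)^{-1}$. Hence, for a small positively oriented circle $\gamma_\ell$ around $\omega_\ell$, the Riesz projection
\[
P_\ell=\frac{1}{2\pi i}\oint_{\gamma_\ell}(z-A)^{-1}\,dz
\]
is a bounded idempotent that commutes with $A$, with $\mathrm{Ran}(P_\ell)=L_\ell$ and $\ker(P_\ell)=\mathrm{Ran}\big((A-\omega_\ell)^{n_\ell}\big)$, where $n_\ell$ is the pole order. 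Since the contours are pairwise disjoint, $P_\ell P_k=\delta_{\ell k}P_\ell$, so $P:=\sum_\ell P_\ell$ is again a bounded idempotent commuting with $A$, with $\mathrm{Ran}(P)=\bigoplus_\ell L_\ell$ and $\ker(P)=\bigcap_\ell\ker(P_\ell)$; both summands are closed and $A$-invariant and $\mathcal{H}=\mathrm{Ran}(P)\oplus\ker(P)$. I would then set $P_b:=P$ and $P_c:=\mathrm{Id}-P$, so that $AP_c=P_cA$ holds automatically and $P_c$ is a (generally non-orthogonal) projection.

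The second step is to identify $\ker(P)$ with $\big(\sum_\ell L_\ell^*\big)^\perp$. For a bounded operator $T$ one has $\ker(T)=(\mathrm{Ran}\,T^*)^\perp$, so $\ker(P)=(\mathrm{Ran}\,P^*)^\perp$. Taking adjoints in the contour integral and substituting $w=\bar z$ shows that $P_\ell^*=\frac{1}{2\pi i}\oint_{\bar\gamma_\ell}(w-A^*)^{-1}\,dw$ with the induced (positive) orientation, i.e.\ $P_\ell^*$ is exactly the Riesz projection of $A^*$ at the eigenvalue $\bar\omega_\ell=\omega_\ell$. Since by condition~5 the operator $A^*$ satisfies the same hypotheses, with $L_\ell^*$ its generalized eigenspace at $\omega_\ell$, we get $\mathrm{Ran}(P_\ell^*)=L_\ell^*$ and hence $\mathrm{Ran}(P^*)=\sum_\ell L_\ell^*$. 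Therefore $\ker(P)=\big(\sum_\ell L_\ell^*\big)^\perp$, which gives the asserted direct sum decomposition and shows $P_c$ is the projection onto $\big(\sum_\ell L_\ell^*\big)^\perp$.

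For the explicit formula for $P_b$, use condition~4 (respectively its analogue contained in condition~5) to pick a basis $\{\phi_j\}$ of $\mathrm{Ran}(P)=\bigoplus_\ell L_\ell$ and a basis $\{\psi_i\}$ of $\mathrm{Ran}(P^*)=\sum_\ell L_\ell^*$ consisting of exponentially decaying (generalized) eigenfunctions of $A$ and of $A^*$, respectively. Since $\mathrm{Ran}(P)=\mathrm{span}\{\phi_j\}$, we may write $P_b f=\sum_j c_j(f)\,\phi_j$ with bounded linear functionals $c_j$; and since $\ker(P)=(\mathrm{Ran}\,P^*)^\perp$, the value $P_b f$ depends on $f$ only through the numbers $\langle f,\psi_i\rangle$, so each $c_j$ is a linear combination $c_j(f)=\sum_i c_{ij}\langle f,\psi_i\rangle$ for suitable scalars $c_{ij}$. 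This produces $P_b=\sum_{i,j}\phi_j\,c_{ij}\langle\,\cdot\,,\psi_i\rangle$, as claimed, with $\phi_j,\psi_i$ exponentially decreasing.

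The step I expect to be the main obstacle is the verification that the admissibility conditions really provide the input the Riesz-projection machinery needs for a non-self-adjoint $A$: that each $\omega_\ell$ is a pole of the resolvent of \emph{finite} order, so that the spectral projections exist and their ranges are the algebraic eigenspaces $L_\ell$ rather than some larger spectral subspace --- this is precisely where the stabilization statements $\ker(A-\omega_\ell)^2=\ker(A-\omega_\ell)$, $\ker(A^2)=\ker(A^3)$, the finite dimensionality, and the closed-range hypothesis (conditions~2 and~3) are used --- together with the bookkeeping of contours and orientations that identifies $P_\ell^*$ with the Riesz projection of $A^*$, so that $\mathrm{Ran}(P_\ell^*)$ is correctly matched with the space $L_\ell^*$ defined through condition~5. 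The exponential decay in the final formula is not a difficulty: it is exactly what conditions~4 and~5 assert about bases of the spaces $L_\ell$ and $L_\ell^*$.
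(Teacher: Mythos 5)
The paper does not actually prove this lemma; it cites it verbatim from [RSS, Lemma~7.3] and moves on, so there is no paper proof to compare against. That said, your Riesz-projection argument is the standard route and I believe it is the one used in [RSS] itself (which in turn relies on Hislop--Sigal style spectral theory for non-self-adjoint operators). The logical skeleton is correct: contour integrals give bounded idempotents $P_\ell$ commuting with $A$, the adjoint relation $P_\ell^*$ = Riesz projection of $A^*$ at $\omega_\ell$ (valid because $\omega_\ell\in\mathbb{R}$ so the reflected contour still encircles $\omega_\ell$), the identity $\ker P = (\mathrm{Ran}\,P^*)^\perp$, and the finite-rank representation of $P_b$ in terms of bases of the $L_j$ and $L_j^*$.

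On the one obstacle you flag---that $\omega_\ell$ is a pole of finite order so that $\mathrm{Ran}\,P_\ell=L_\ell$ rather than something larger---the admissibility conditions do give you exactly what you need, and it is worth saying how. Conditions~2 and~3 give a finite-dimensional kernel and a closed range for $A-\omega_\ell$ (for $A^2$ when $\ell=0$), and condition~5 applied to $A^*$ gives a finite-dimensional $\ker(A^*-\omega_\ell)$, hence a finite-dimensional cokernel. So $A-\omega_\ell$ is Fredholm of index zero at an isolated point of the spectrum; together with the ascent stabilization $\ker(A-\omega_\ell)^2=\ker(A-\omega_\ell)$ (resp.\ $\ker A^2=\ker A^3$), this forces $\omega_\ell$ to be a pole of the resolvent with algebraic eigenspace $L_\ell$, and then the standard Riesz theory identifies $\mathrm{Ran}\,P_\ell=L_\ell$ and $\ker P_\ell=\mathrm{Ran}(A-\omega_\ell)^{n_\ell}$. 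One further bookkeeping point: the index range in the lemma as stated in the paper starts at $j=1$, but you correctly run over all the isolated eigenvalues including $\omega_0=0$; the space $L_0$ must be in the sum for the decomposition to be correct, so the lower limit in the displayed formula should read $j=0$.
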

\begin{defn}
\label{Stability}For $A$ satisfying the admissible conditions, we
say $A$ satisfies the stability condition if 
\[
\mbox{\ensuremath{\sup}}_{t\in\mathbb{R}}\left\Vert e^{itA}P_{c}\right\Vert_{\mathcal{H}\rightarrow\mathcal{H}} <\infty.
\]

\end{defn}
In order the study the matrix charge transfer model, we need the vector-valued
Galilei transformation similarly as in the scalar case:
\[
\mathcal{G}_{\vec{v},y}(t)\left(\begin{array}{c}
\psi_{1}\\
\psi_{2}
\end{array}\right):=\left(\begin{array}{c}
\mathfrak{g}_{\vec{v},y}(t)\psi_{1}\\
\overline{\mathfrak{g}_{\vec{v},y}(t)\overline{\psi_{2}}}
\end{array}\right),
\]
where $\mathfrak{g}_{\vec{v},y}(t)$ is the scalar version Galilei
transformation. In contrast to the scalar case, the conjugated transformation
now involves a modulation $\mathcal{M}(t)$. We cite Lemma 8.2 in
\cite{RSS}.
\begin{lem*}
[\cite{RSS}, Lemma 8.2] Let $\alpha\in\mathbb{R}$ and let 
\[
A:=\left(\begin{array}{cc}
-\frac{1}{2}\Delta+\frac{1}{2}\alpha^{2}+U & -W\\
W & \frac{1}{2}\Delta-\frac{1}{2}\alpha^{2}-U
\end{array}\right)
\]
with real-valued $U$ and $W$. Moreover, let $\vec{v}\in\mathbb{R}^{3}$,
$\theta(t,x)=\left(\left|\vec{v}\right|^{2}+\alpha^{2}\right)t+2x\cdot\vec{v}+\gamma\,,\gamma\in\mathbb{R}$,
and define 
\[
H(t):=\left(\begin{array}{cc}
-\frac{1}{2}\Delta+U\left(\cdot-\vec{v}t\right) & -e^{i\theta\left(t,\cdot-\vec{v}t\right)}W\left(\cdot-\vec{v}t\right)\\
e^{-i\theta\left(t,\cdot-\vec{v}t\right)}W\left(\cdot-\vec{v}t\right) & \frac{1}{2}\Delta-\frac{1}{2}\alpha^{2}-U\left(\cdot-\vec{v}t\right)
\end{array}\right).
\]
Let $S(0)=Id$, $S(t)$ denote the propagator of the system 
\[
\frac{1}{i}\partial_{t}S(t)+H(t)S(t)=0.
\]
Finally, let 
\[
\mathcal{M}(t)=\mathcal{M}_{\alpha,\gamma}(t)=\left(\begin{array}{cc}
e^{-\frac{i\omega(t)}{2}} & 0\\
0 & e^{\frac{i\omega(t)}{2}}
\end{array}\right)
\]
where $\omega(t)=\alpha^{2}t+\gamma$. Then we have the following
relation 
\[
S(t)=\mathcal{G}_{\vec{v}}(t)^{-1}\mathcal{M}(t)^{-1}e^{-itA}\mathcal{M}(0)\mathcal{G}_{\vec{v}}(0).
\]

\end{lem*}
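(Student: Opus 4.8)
The plan is to prove the identity exactly as one proves the scalar conjugacy \eqref{eq:114}--\eqref{eq:115}, only with an extra modulation inserted. Set
\[
\Phi(t):=\mathcal{G}_{\vec{v}}(t)^{-1}\mathcal{M}(t)^{-1}e^{-itA}\mathcal{M}(0)\mathcal{G}_{\vec{v}}(0),
\]
and I would verify that $\Phi$ solves the same Cauchy problem as $S$, namely $\tfrac{1}{i}\partial_{t}\Phi+H(t)\Phi=0$ with $\Phi(0)=\mathrm{Id}$, and then conclude $\Phi\equiv S$ by uniqueness of the propagator of $\tfrac{1}{i}\partial_{t}+H(t)$. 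The initial condition is immediate, since at $t=0$ the factors $\mathcal{M}(0)$ and $\mathcal{G}_{\vec{v}}(0)$ cancel their inverses and $e^{0}=\mathrm{Id}$. Uniqueness is routine: the free matrix operator $\mathrm{diag}(-\tfrac12\Delta,\tfrac12\Delta)$ generates a strongly continuous group on $\mathcal{H}$ and each potential is a bounded operator depending strongly continuously on $t$, so the Duhamel iteration converges and defines a unique two-parameter propagator.

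The work is the differentiation of $\Phi$, which I would carry out by peeling off the operators one at a time from the inside. Fixing $\vec{f}\in\mathcal{H}$, set $\phi(t):=e^{-itA}\mathcal{M}(0)\mathcal{G}_{\vec{v}}(0)\vec{f}$, so $\tfrac{1}{i}\partial_{t}\phi+A\phi=0$; then $\chi(t):=\mathcal{M}(t)^{-1}\phi(t)$, and finally $\psi(t):=\mathcal{G}_{\vec{v}}(t)^{-1}\chi(t)=\Phi(t)\vec{f}$. Since $\mathcal{M}(t)$ is diagonal with $t$-dependent, $x$-independent entries, it commutes with the Laplacian and with the diagonal part of $A$, and it merely multiplies the off-diagonal block of $A$ by the scalar phase $e^{\pm i\omega(t)}$; combining this with the fact that $\mathcal{M}(t)^{-1}\partial_{t}\mathcal{M}(t)$ is a constant multiple of $\sigma_{3}=\mathrm{diag}(1,-1)$ shows that $\chi$ solves $\tfrac{1}{i}\partial_{t}\chi+\widetilde{H}(t)\chi=0$, where $\widetilde{H}(t)$ is $A$ with its off-diagonal entries twisted by $e^{\pm i\omega(t)}$ and its diagonal reconciled with the constants of $H(t)$ by the modulation term $-i\mathcal{M}(t)^{-1}\partial_{t}\mathcal{M}(t)$. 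This is the step that puts in place the phase $e^{i\omega(t)}$ and the $\alpha^{2}$-constants of the target Hamiltonian.

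The remaining step is to conjugate through the matrix Galilei transform. On the first slot this is the scalar $\mathfrak{g}_{\vec{v}}(t)$, and the Galilei covariance of the free equation, cf.\ \eqref{eq:112} and \eqref{eq:115}, says that $\mathfrak{g}_{\vec{v}}(t)^{-1}\bigl(-\tfrac12\Delta\bigr)\mathfrak{g}_{\vec{v}}(t)$ differs from $-\tfrac12\Delta$ only by a first-order term $\mp i\vec{v}\cdot\nabla$ and a constant $\pm\tfrac12|\vec{v}|^{2}$, both of which are cancelled by $-i\,\mathfrak{g}_{\vec{v}}(t)^{-1}\partial_{t}\mathfrak{g}_{\vec{v}}(t)$, while a potential $U$ is carried to $U(\cdot-\vec{v}t)$; simultaneously the global phase $e^{\mp i|\vec{v}|^{2}t/2}$ and the plane-wave factor $e^{\pm ix\cdot\vec{v}}$ built into $\mathfrak{g}_{\vec{v}}(t)$ combine with the $e^{\pm i\omega(t)}$ already present in $\widetilde{H}(t)$ to assemble exactly $e^{\pm i\theta(t,\cdot-\vec{v}t)}$ with $\theta(t,x)=(|\vec{v}|^{2}+\alpha^{2})t+2x\cdot\vec{v}+\gamma$. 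On the second slot the transform is the conjugated version $\overline{\mathfrak{g}_{\vec{v}}(t)\overline{\,\cdot\,}}$, matched to the $+\tfrac12\Delta$ there: one applies a complex conjugation so that the second component obeys an ordinary Schr\"odinger equation, runs the same scalar computation, and conjugates back. Putting the two slots together gives $\tfrac{1}{i}\partial_{t}\Phi+H(t)\Phi=0$, and the lemma follows.

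I expect the only genuine obstacle to be bookkeeping: one must keep the scalar global phases $e^{\mp i|\vec{v}|^{2}t/2}$, the plane-wave factors $e^{\pm ix\cdot\vec{v}}$, the modulation phases $e^{\pm i\omega(t)/2}$, and the constants $\tfrac12\alpha^{2}$ and $\tfrac12|\vec{v}|^{2}$ carefully aligned so that they assemble into precisely the phase $\theta$ and the diagonal constants of $H(t)$, and one must carry the complex conjugations in the second slot through the time differentiation without sign errors. Apart from the elementary Galilei covariance of the free flow and the well-posedness of the linear evolution, no analytic input is needed, so once the algebra is organized the identity is a direct calculation.
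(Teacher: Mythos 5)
The paper does not prove this statement: it is a verbatim citation of Lemma~8.2 from \cite{RSS}, stated without proof and used as a black box in the matrix case, so there is no ``paper's own proof'' to compare against. That said, your outline is the correct and standard one, and it is in fact the same computation that \cite{RSS} carries out: define $\Phi(t)=\mathcal{G}_{\vec{v}}(t)^{-1}\mathcal{M}(t)^{-1}e^{-itA}\mathcal{M}(0)\mathcal{G}_{\vec{v}}(0)$, check $\Phi(0)=\mathrm{Id}$, differentiate by peeling off the modulation and then the Galilei factor, and invoke uniqueness of the propagator. The two key structural observations — that $\mathcal{M}(t)^{-1}\partial_t\mathcal{M}(t)=-\tfrac{i\alpha^2}{2}\sigma_3$ cancels the $\pm\tfrac12\alpha^2$ on the diagonal while $\mathcal{M}(t)^{-1}A\mathcal{M}(t)$ twists the off-diagonal entries by $e^{\pm i\omega(t)}$, and that the scalar Galilei covariance in each slot (with a complex conjugation in the second) then produces the shift $U\mapsto U(\cdot-\vec{v}t)$ and supplies the $|\vec{v}|^2$- and $x\cdot\vec{v}$-dependent phases that combine with $e^{\pm i\omega(t)}$ into $e^{\pm i\theta(t,\cdot-\vec{v}t)}$ — are exactly the content of the lemma, and you have identified them correctly. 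The one thing I would flag is that you should actually carry out the final phase reconciliation rather than assert it: the split of $|\vec{v}|^2 t$ and $2x\cdot\vec{v}$ between the prefactor $e^{i|\vec{v}|^2 t/2}e^{ix\cdot\vec{v}}$ of $\mathfrak{g}_{\vec{v}}(t)$, its inverse, and the conjugated version $\overline{\mathfrak{g}_{\vec{v}}(t)\overline{\,\cdot\,}}$ in the second slot is where sign errors typically creep in, and the $(2,2)$ entry of $H(t)$ as printed in the paper carries an asymmetric $-\tfrac12\alpha^2$ that you should check survives the cancellation you describe. Apart from that, your proposal is a correct proof strategy that matches the intended argument.
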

For matrix charge transfer models, the analysis should be similar
to the scalar case except that we have to modify the asymptotic orthogonality
condition. Recall that as we remarked above, it is not necessary to use the asymptotic
completeness results. In the scalar case, the asymptotic orthogonality condition is sufficient
for us. In the matrix case, the asymptotic orthogonality condition
is replaced by the definition of ``scattering states'' in Definition
8.3 in \cite{RSS} which is similar to the scattering space in the sense of Definition \ref{SP} for the scalar case.
\begin{defn}
\label{scattm}Let $U(t)\vec{\psi_{0}}=\vec{\psi}(t,\cdot)$, we call
that $\vec{\psi_{0}}$ a scattering state relative to $H_{j}$ if
\[
\left\Vert P_{b}\left(H_{j},t\right)U(t)\vec{\psi_{0}}\right\Vert _{L^{2}}\rightarrow0,\,\,\, t\rightarrow\infty.
\]
Here
\[
P_{b}\left(H_{j},t\right):=\mathcal{G}_{\vec{v}_{j}}(t)^{-1}\mbox{\ensuremath{\mathcal{M}}}_{j}(t)^{-1}P_{b}\left(H_{j}\right)\mbox{\ensuremath{\mathcal{M}}}_{j}(t)\mathcal{G}_{\vec{v}_{j}}(t)
\]
with $\mbox{\ensuremath{\mathcal{M}}}_{j}(t)=\mbox{\ensuremath{\mathcal{M}}}_{\alpha_{j},\gamma_{j}}(t)$.
\end{defn}
By the discussion in Section 8.3 in \cite{RSS}, if  $\vec{\psi_{0}}$ a scattering state relative to each $H_{j}$, we have the rate
of convergence similar to the scalar case, 
\[
\left\Vert P_{b}\left(H_{1},t\right)U(t)\vec{\psi_{0}}\right\Vert _{L^{2}}+\left\Vert P_{b}\left(H_{2},t\right)U(t)\vec{\psi_{0}}\right\Vert _{L^{2}}\lesssim e^{-\alpha t}\left\Vert \vec{\psi_{0}}\right\Vert _{L^{2}}
\]
 for some $\alpha>0$.

With all the preparations above, we now can formulate our Strichartz
estimates for matrix charge transfer models.
\begin{thm}
\label{thm:Strich-maxtrix}Consider the matrix charge transfer model
as in Definition \ref{Matrix}. We denote $\vec{\psi}(t)=U(t,0)\vec{\psi}_{0}$
and assume $\vec{\psi}_{0}$ is a scattering state relative to each
$H_{j}$ in sense of Definition \ref{scattm}. Then for a Schr\"odinger
admissible pair $(p,q)$ in $\mathbb{R}^{3}$, i.e., 
\begin{equation}
\frac{2}{p}+\frac{3}{q}=\frac{3}{2}\label{eq:schroadm}
\end{equation}
with $2\leq q\leq\infty,\,p\geq2$, we have 
\begin{equation}
\left\Vert \vec{\psi}\right\Vert _{L_{t}^{p}\left([0,\infty),\,L_{x}^{q}\right)}\leq C\left\Vert \vec{\psi}_{0}\right\Vert _{L_{x}^{2}}.\label{eq:Strim}
\end{equation}
 for some finite constant $C$.
\end{thm}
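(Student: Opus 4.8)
The plan is to transplant the scalar argument of Section~\ref{sec:Strich} to the matrix setting, with the Galilei transformation $\mathfrak{g}_{\vec e_1}(t)$ replaced by the vector-valued $\mathcal{G}_{\vec v_j}(t)$ together with the modulation $\mathcal{M}_j(t)$, and the unitary group $e^{-itH_j}$ replaced by the non-unitary matrix propagator $e^{-itH_j}$, whose boundedness on $P_c$ is guaranteed precisely by the stability condition of Definition~\ref{Stability}. The first step is to isolate the two weighted estimates that are the matrix analogues of Lemmas~\ref{lem:We1} and~\ref{lem:We2}: for $\sigma>\tfrac32$ and $t\ge t_0$,
\[
\bigl\Vert \langle x-x_0\rangle^{-\sigma}\,U(t,t_0)P_s(t_0)\,\langle x-x_1\rangle^{-\sigma}\bigr\Vert_{\mathcal H\to\mathcal H}\le C\langle t-t_0\rangle^{-3/2},
\]
\[
\int_0^\infty\bigl\Vert \langle x-x(t)\rangle^{-\sigma}\,U(t,t_0)P_s(t_0)\,\vec u\bigr\Vert_{\mathcal H}^2\,dt\le C\Vert\vec u\Vert_{\mathcal H}^2,
\]
where $P_s(t_0)$ is the projection onto the matrix scattering states of Definition~\ref{scattm}. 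Granting these, the proof of~\eqref{eq:Strim} runs exactly as that of Theorem~\ref{thm:Strich}: write the system as $\tfrac1i\partial_t\vec\psi+\mathrm{diag}\bigl(-\tfrac12\Delta,\tfrac12\Delta\bigr)\vec\psi=-\sum_jV_j(\cdot-\vec v_jt)\vec\psi$, apply the endpoint Strichartz estimate of~\cite{KT} to each component of the diagonal free flow, bound $\Vert V_j(\cdot-\vec v_jt)\vec\psi\Vert_{L^2_tL^{6/5}_x}\lesssim\Vert\langle x-\vec v_jt\rangle^{-m}\vec\psi\Vert_{L^2_tL^2_x}$ by H\"older with $m$ large, split off the exponentially small bound-state part using the decay estimate recorded after Definition~\ref{scattm}, and close with the second weighted estimate applied to $P_s(0)\vec\psi_0=\vec\psi_0$.

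The two weighted estimates themselves are proved by the bootstrap of Section~\ref{sec:Strich}. The free ingredients of Lemmas~\ref{lem:FWe1} and~\ref{lem:Fwe2} hold for $\mathrm{diag}\bigl(e^{it\Delta/2},e^{-it\Delta/2}\bigr)$ componentwise. For the perturbed building block one needs the matrix analogue of Lemma~\ref{lem:pwe}, namely the weighted dispersive and local-smoothing bounds for $e^{-itH_j}P_c(H_j)$; these follow from the $L^1\to L^\infty$ dispersive estimate and the Kato smoothing estimate for admissible and stable matrix Hamiltonians established in~\cite{RSS} (see also~\cite{Sch}), the stability condition supplying the $L^2$ endpoint for the interpolation. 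One then runs the Duhamel expansion $U(t,t_0)P_s(t_0)=F+iL+iG$ with $F$ the diagonal free piece and $L,G$ the $V_1$- and $V_2$-contributions, decomposes $[t_0,t]=[t_0,t_0+A]\cup[t_0+A,t-A]\cup[t-A,t]$, and disposes of the first and middle subintervals by the free weighted estimates and the bootstrap hypothesis exactly as for $L_1,L_2,G_1,G_2$ above. On the last subinterval one expands $U$ once more with respect to $H_1$, splits the $P_b(H_1)$-part off as exponentially small, and is left with the analogue of the term $J$.

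The step I expect to be the main obstacle is the treatment of $J$ on $[t-A,t]$, where the non-self-adjointness and the extra modulation factors genuinely intervene. After conjugating the $H_2$-potential by $\mathcal{G}_{\vec v_2}(t)$ and $\mathcal{M}_2(t)$, the low-velocity piece $J_L$, obtained by inserting the smooth frequency cutoff $F(|\vec p|\le M)$, must be shown to be small because the supports of $V_1$ and $V_2(\cdot-\tau\vec v_2)$ separate linearly in $\tau$; this requires redoing the non-stationary-phase and commutator estimates for the matrix symbol, using $\Vert[V_1,F(|\vec p|\le M)]\Vert_{\mathcal H\to\mathcal H}\lesssim M^{-1}$ and the weighted decay $|D^\beta\widehat{V_2^\sigma}(\xi-\eta)|\lesssim\langle\eta\rangle^{-N}$ on $|\xi|\lesssim M$, followed by a Gronwall step over the short interval $s-\tau\le B$ to pass from the diagonal free flow to $e^{-iH_1(s-\tau)}P_c(H_1)$. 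The high-velocity piece $J_H$ is handled by the matrix Kato smoothing estimate just as $J_{c,H}$ was in the scalar case. Choosing $A$, $B$, $M$ large independently of $T$ makes the coefficients in front of $C(T)$ strictly less than $\tfrac12$, so the bootstrap closes; letting $T\to\infty$ yields the two weighted estimates and hence~\eqref{eq:Strim}.
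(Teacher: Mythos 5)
Your proposal is correct and follows essentially the same route the paper takes: the paper itself states that ``the proof is basically identical as with the scalar case'' and defers to the scalar bootstrap of Section~\ref{sec:Strich}, merely citing the matrix-valued ingredients (dispersive estimates, wave-operator boundedness, Kato smoothing) from \cite{Cu,RSS,ES}. Your write-up faithfully reproduces that plan --- the two matrix weighted estimates, the endpoint Strichartz applied componentwise to the diagonal free flow, the $A$-$B$-$M$ bootstrap decomposition, and the low/high-frequency split of the $J$-term with the extra $\mathcal{G}_{\vec v_j}(t)\mathcal{M}_j(t)$ conjugation and stability condition supplying the $L^2$ endpoint --- and correctly identifies the non-self-adjointness and modulation factors as the only places needing genuine care.
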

As in the scalar case, the proof Theorem \ref{thm:Strich-maxtrix} is based
on certain weighted estimates which rely on a bootstrap argument. Since
the proof is basically identical as with the scalar case, we do not
carry out the details. We only discuss it briefly. Recall that in
our proof, there are several important ingredients: dispersive estimates
for stationary potentials, the boundedness of wave operators, the
Kato smoothing estimate. All of them hold for the matrix case. For
the dispersive estimates for stationary potentials, one can find details
in \cite{Cu,RSS,ES}; for the boundedness of wave operators, the
results are discussed in \cite{Cu}; the Kato smoothing estimates
can be obtained as for the scalar case in \cite{RSS}.
Hence with the remark at the beginning of the second section, and
all the proofs above, we can conclude that Strichartz estimates
hold for the matrix case.

\begin{rem*}
	With the dispersive estimate for matrix transfer models and the results on scattering states, we can follow the proof in \cite{RSS} to prove the asymptotic completeness for matrix charge transfer Hamiltonians.
\end{rem*}

Similar to the scalar case, we also have the energy estimate.
\begin{thm}
\label{thm:energym}For $\vec{\psi}_{0}\in\mathcal{H}^{1}:=H^{1}\left(\mathbb{R}^{3}\right)\times H^{1}\left(\mathbb{R}^{3}\right)$,
we have 
\begin{equation}
\sup_{t\in\mathbb{R}}\left\Vert U(t,0)\vec{\psi}_{0}\right\Vert _{\mathcal{H}^{1}}\leq C\left\Vert \psi_{0}\right\Vert _{\mathcal{H}^{1}}.\label{416}
\end{equation}
\end{thm}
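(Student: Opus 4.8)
The plan is to reproduce the scalar argument of Theorem~\ref{thm:energy} line by line, substituting the matrix analogue of each ingredient. Since $\vec{\psi}_0\in\mathcal{H}^1$ carries no scattering hypothesis, the first step is a decomposition of a general datum: I would invoke asymptotic completeness for the matrix charge transfer Hamiltonian --- valid by the remark preceding this theorem, following the argument of \cite{RSS} --- to write
\[
U(t,0)\vec{\psi}_0 = (\text{generalized bound-state terms for } H_1, H_2) + e^{-itB}\vec{\phi}_0 + R(t), \qquad \|R(t)\|_{L^2}\to 0,
\]
with $B=\mathrm{diag}(-\frac{1}{2}\Delta,\frac{1}{2}\Delta)$. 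Each bound-state term has the shape $\mathcal{G}_{\vec{v}_j}(t)^{-1}\mathcal{M}_j(t)^{-1}e^{-itH_j}(\text{fixed datum})$ on a finite-dimensional space spanned by exponentially decaying functions (the $\phi_j,\psi_i$ of \cite{RSS}, Lemma~7.3); since $\mathcal{M}_j(t)$ is multiplication by $x$-independent unimodular scalars and $\mathcal{G}_{\vec{v}_j}(t)$ is bounded on $\mathcal{H}^1$ with norm independent of $t$ (its translation part is an $L^2$ isometry and its frequency shift has fixed velocity), these terms are controlled in $\mathcal{H}^1$ by $C\|\vec{\psi}_0\|_{\mathcal{H}^1}$ uniformly in $t$, as in the treatment of \cite{RSS}; the $e^{-itB}$-piece is fine for the $L^2$-component since $e^{-itB}$ is unitary. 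Hence it suffices to estimate the scattering component, so one may assume $P_s(t)\vec{\psi}(t)=\vec{\psi}(t)$, where $P_s(t)$ is the projection onto the scattering space attached to $\{H_j\}$ in the sense of Definition~\ref{scattm}.

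Next I would differentiate the matrix system in $x_k$, $k=1,2,3$, and set $\vec{v}=\partial_{x_k}\vec{\psi}$; then $\vec{v}$ solves the same system with source $\vec{F}=\sum_j (\partial_{x_k}V_j(\cdot-\vec{v}_j t))\vec{\psi}$. The one genuinely new point is the $x$-dependence of the phase $\theta_j(t,x)=(|\vec{v}_j|^2+\alpha_j^2)t+2x\cdot\vec{v}_j+\gamma_j$: differentiating $e^{\pm i\theta_j}W_j$ in $x_k$ gives $\pm 2i(\vec{v}_j)_k\, e^{\pm i\theta_j}W_j+e^{\pm i\theta_j}\partial_{x_k}W_j$, a constant multiple of the original exponentially decaying matrix potential plus its derivative. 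So $\vec{F}$ is again a finite sum of exponentially decaying matrix potentials acting on $\vec{\psi}$, whence $\|\langle x\rangle^{m}\vec{F}\|_{L^2_x}\lesssim\|\vec{\psi}\|_{L^6_x}$ and $\|\vec{F}\|_{L^{6/5}_x}\lesssim\sum_j\|\langle x-\vec{v}_j t\rangle^{-m}\vec{\psi}\|_{L^2_x}$ by H\"older. Moreover, pairing against the exponentially decaying (generalized) eigenfunctions of $H_j^{*}$ and using the free dispersive decay of $e^{-itB}$ together with $\|R(t)\|_{L^2}\to 0$, one checks as in the scalar case that $\vec{v}$ is asymptotically a scattering state relative to each $H_j$; the matrix analogue of Proposition~\ref{prop:convrate} then gives $\|(1-P_s(t))\vec{v}(t)\|_{L^2}\lesssim\|\vec{\psi}_0\|_{\mathcal{H}^1}$, so it remains to control $P_s(t)\vec{v}(t)$.

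For that I would feed the Duhamel formula for $\vec{v}$ into the endpoint Strichartz estimate for $e^{-itB}$ (diagonal, hence the scalar endpoint of \cite{KT} componentwise), reducing everything to bounds on $\|\langle x-x(t)\rangle^{-m}\vec{v}\|_{L^2_t L^2_x}$ along smooth curves $x(t)$; expanding $\vec{v}$ once more by Duhamel and invoking the matrix versions of Lemmas~\ref{lem:We1} and \ref{lem:We2} --- implicit in the proof of Theorem~\ref{thm:Strich-maxtrix} --- together with the matrix Strichartz estimate of Theorem~\ref{thm:Strich-maxtrix} applied to $\vec{\psi}$ to absorb the $\vec{F}$-contribution, one reaches
\[
\|\vec{v}\|_{L^p_t L^q_x}\lesssim \|\vec{v}_0\|_{L^2} + \|\langle x\rangle^{m}\vec{F}\|_{L^2_t L^2_x} + \|\vec{F}\|_{L^2_t L^{6/5}_x}\lesssim \|\vec{\psi}_0\|_{\mathcal{H}^1}
\]
for every Schr\"odinger admissible $(p,q)$. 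Choosing $(p,q)=(\infty,2)$ and summing over $k=1,2,3$ (the $L^2$-component of $\mathcal{H}^1$ being already controlled by the decomposition) yields $\sup_{t}\|\vec{\psi}(t)\|_{\mathcal{H}^1}\lesssim\|\vec{\psi}_0\|_{\mathcal{H}^1}$.

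The main obstacle, I expect, is not any single estimate --- all of them are inherited from the scalar analysis or from \cite{RSS,Cu,ES} --- but the bookkeeping forced by the non-self-adjointness of the $H_j$: the spectral projections are not orthogonal and the generalized kernel at $0$ may carry a Jordan block, so both the decomposition from asymptotic completeness and the assertion that $\vec{v}$ is asymptotically a scattering state must be expressed through the modulated, Galilei-conjugated projections $P_b(H_j,t)$ of Definition~\ref{scattm}. This is precisely where the admissibility and stability hypotheses of Definition~\ref{Matrix} are used, exactly as in \cite{RSS}; with that framework in hand the proof is, just as for Theorem~\ref{thm:Strich-maxtrix}, word for word the scalar one.
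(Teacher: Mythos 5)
Your proposal is, in effect, the proof the paper wants the reader to supply: Theorem~\ref{thm:energym} is stated without a separate argument, the preceding discussion asserting only that the scalar ingredients (dispersive and weighted estimates, boundedness of wave operators, Kato smoothing, asymptotic completeness) all have matrix analogues, so that the scalar proof of Theorem~\ref{thm:energy} transfers. Your transcription does this faithfully, and you correctly flag the one genuinely new computation: differentiating $e^{\pm i\theta_j(t,x-\vec v_j t)}W_j(x-\vec v_j t)$ in $x_k$ only produces $\pm 2i(\vec v_j)_k$ times the same entry plus $\partial_{x_k}W_j$ conjugated by the same phase, so the source $\vec F$ is again a finite sum of exponentially decaying matrix potentials acting on $\vec\psi$, and the $L^{6/5}_x$ and weighted $L^2_x$ bounds go through.

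The one place where the transfer is not ``word for word,'' and where your proposal (like the paper) asserts rather than argues, is the control of the bound-state contribution. Admissibility condition 2 in Definition~\ref{Admissible} requires $\ker A\subsetneq\ker A^2=L_0$: there is a genuine Jordan block at $0$, and $e^{-itH_j}$ restricted to $L_0$ has the form $I-itN$ with $N\neq 0$, hence grows linearly in $t$. The Galilei transform $\mathcal G_{\vec v_j}(t)^{-1}$ and modulation $\mathcal M_j(t)^{-1}$ are $L^2$- and $H^1$-bounded uniformly in $t$ (as you note), so they cannot absorb this growth. Thus the claim that the bound-state terms in the asymptotic completeness decomposition are ``controlled in $\mathcal H^1$ uniformly in $t$'' is exactly the step that needs a matrix-specific justification: one must either show that for the full charge-transfer evolution $U(t,0)\vec\psi_0$ the asymptotic bound-state datum avoids the nilpotent directions (landing in $\ker H_j$, not merely in $L_0$), or impose this as a hypothesis, or exhibit the cancellation against $R(t)$. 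Your closing paragraph correctly identifies the Jordan block as the source of trouble but then dismisses it as bookkeeping; it is the one point at which the scalar proof really does not copy over, and it deserves an actual argument or an explicit restriction of the statement.
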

\begin{cor}
For $\vec{\psi}_{0}\in\mathcal{H}^{k}:=H^{k}\left(\mathbb{R}^{3}\right)\times H^{k}\left(\mathbb{R}^{3}\right)$
where $k$ is a non-negative integer, then we have 
\begin{equation}
\sup_{t\in\mathbb{R}}\left\Vert U(t,0)\psi_{0}\right\Vert _{\mathcal{H}^{k}}\leq C\left\Vert \psi_{0}\right\Vert _{\mathcal{H}^{k}}.\label{eq:417}
\end{equation}
\end{cor}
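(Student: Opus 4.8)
The plan is to prove, by induction on $k$, the stronger statement $P(k)$: for every multi-index $\gamma$ with $|\gamma|\le k$ and every Schr\"odinger admissible pair $(p,q)$ in $\mathbb{R}^{3}$,
\[
\left\Vert \partial_{x}^{\gamma}\vec{\psi}\right\Vert _{L_{t}^{p}\left([0,\infty),L_{x}^{q}\right)}\le C_{k}\left\Vert \vec{\psi}_{0}\right\Vert _{\mathcal{H}^{k}},\qquad\vec{\psi}(t)=U(t,0)\vec{\psi}_{0}.
\]
The base case $P(0)$ is exactly Theorem \ref{thm:Strich-maxtrix} (which already includes the endpoint $(p,q)=(\infty,2)$, i.e.\ the $L^{2}$ stability), and \eqref{eq:417} follows from $P(k)$ by taking $(p,q)=(\infty,2)$ and summing over $|\gamma|\le k$. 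Thus the only content is the step $P(k-1)\Rightarrow P(k)$; for $|\gamma|\le k-1$ this is immediate from $P(k-1)$ and $\|\vec{\psi}_{0}\|_{\mathcal{H}^{k-1}}\le\|\vec{\psi}_{0}\|_{\mathcal{H}^{k}}$, so fix $|\gamma|=k$ and set $v=\partial_{x}^{\gamma}\vec{\psi}$.

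First I would derive the equation for $v$. Differentiating the system of Definition \ref{Matrix} and using that each phase evaluated along the moving frame, $\theta_{j}\left(t,x-\vec{v}_{j}t\right)=\left(\alpha_{j}^{2}-\left|\vec{v}_{j}\right|^{2}\right)t+2x\cdot\vec{v}_{j}+\gamma_{j}$, is \emph{affine} in $x$ (so that every $x$-derivative of $e^{i\theta_{j}\left(t,\,\cdot-\vec{v}_{j}t\right)}W_{j}(\cdot-\vec{v}_{j}t)$ is again $e^{i\theta_{j}\left(t,\,\cdot-\vec{v}_{j}t\right)}$ times a constant-coefficient combination of $W_{j}$ and its derivatives), one finds
\[
\frac{1}{i}\partial_{t}v+\left(\begin{array}{cc}-\frac{1}{2}\Delta & 0\\0 & \frac{1}{2}\Delta\end{array}\right)v+\sum_{j}V_{j}\left(\cdot-\vec{v}_{j}t\right)v=F,
\]
where $F$ is a finite sum of terms $\widetilde{V}_{j,\beta}(\cdot-\vec{v}_{j}t)\,\partial_{x}^{\delta}\vec{\psi}$ with $|\delta|\le k-1$ and each $\widetilde{V}_{j,\beta}$ exponentially decaying up to the unimodular factor $e^{i\theta_{j}(t,\,\cdot-\vec{v}_{j}t)}$. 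Consequently, pairing against the weights $\langle x-\vec{v}_{j}t\rangle^{-m}$ centered at the moving supports and using H\"older (the $L^{3}$ and $L^{3/2}$ norms of $\langle y\rangle^{-m}\widetilde{V}_{j,\beta}(y)$ being finite and translation invariant), both $\|F\|_{L_{t}^{2}L_{x}^{6/5}}$ and the weighted $L_{t}^{2}L_{x}^{2}$ norms of $F$ that occur below are bounded by $\sum_{|\delta|\le k-1}\|\partial_{x}^{\delta}\vec{\psi}\|_{L_{t}^{2}L_{x}^{6}}$, hence by $\|\vec{\psi}_{0}\|_{\mathcal{H}^{k-1}}$ via $P(k-1)$.

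It then remains to run, for $v$, the argument used for Theorem \ref{thm:energym} (itself the matrix transcription of the proof of Theorem \ref{thm:energy}). One first checks that $v$ is a scattering state relative to each $H_{j}$ in the sense of Definition \ref{scattm}: decompose $\vec{\psi}$ by matrix asymptotic completeness into its bound-state channels, a free channel $e^{-itB}\vec{\phi}_{0}$, and an $L^{2}$-vanishing remainder; differentiate; pair $v(t)$ against the exponentially decaying (generalized) eigenfunctions of $H_{j}^{*}$ transported by $\mathcal{G}_{\vec{v}_{j}}(t)^{-1}\mathcal{M}_{j}(t)^{-1}$; and use the free dispersive estimate on the free channel together with the matrix analogue of the convergence-rate estimate from Section~8.3 of \cite{RSS} (the counterpart of Proposition \ref{prop:convrate}). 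This shows $\|P_{b}(H_{j},t)v(t)\|_{L^{2}}\to0$ and lets us replace $v$ by $P_{s}(t)v$ up to a term $\lesssim\|\vec{\psi}_{0}\|_{\mathcal{H}^{1}}$. Writing $v$ through Duhamel in terms of the free matrix propagator and $F$, one bounds the homogeneous piece by the matrix analogues of Lemmas \ref{lem:We1} and \ref{lem:We2} applied to $\partial_{x}^{\gamma}\vec{\psi}_{0}$, and the $F$-piece by the matrix weighted estimates together with the $F$-bounds above and the inhomogeneous endpoint Strichartz estimate, to conclude
\[
\left\Vert v\right\Vert _{L_{t}^{p}L_{x}^{q}}\lesssim\left\Vert \partial_{x}^{\gamma}\vec{\psi}_{0}\right\Vert _{L^{2}}+\left\Vert \vec{\psi}_{0}\right\Vert _{\mathcal{H}^{k-1}}\lesssim\left\Vert \vec{\psi}_{0}\right\Vert _{\mathcal{H}^{k}},
\]
which is $P(k)$.

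The main obstacle is the scattering-space reduction in the matrix setting: proving that $\partial_{x}^{\gamma}\vec{\psi}$ is again a scattering state requires carefully propagating the modulations $\mathcal{M}_{j}(t)$ and the moving frames $\mathcal{G}_{\vec{v}_{j}}(t)$ through the asymptotic-completeness decomposition, and the weighted and Kato-type inputs (the matrix analogues of Lemmas \ref{lem:We1}, \ref{lem:We2}, \ref{lem:pwe} and \ref{lem:kato}) must be used in the form valid when $e^{-itH_{j}}$ is only \emph{bounded} (via the stability condition of Definition \ref{Stability}), not unitary. Once this is in place, the remaining steps are the verbatim matrix versions of the arguments of Sections \ref{sec:Strich} and \ref{sec:energy}, and the induction closes as above.
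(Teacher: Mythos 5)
Your overall strategy — induction on $k$, mirroring the argument of Theorem \ref{thm:energym} (itself the matrix transcription of Theorem \ref{thm:energy}) at each stage — is the right one, and the observation that the phases $\theta_{j}(t,\cdot-\vec{v}_{j}t)$ are affine in $x$ (so differentiation produces forcing terms of the same structural type, $\widetilde{V}_{j,\beta}(\cdot-\vec{v}_{j}t)\partial^{\delta}\vec{\psi}$ with $|\delta|<k$) is exactly the point that needs to be checked in the matrix setting.

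However, the inductive statement $P(k)$ as you have written it is false. You assert, for \emph{every} $\vec{\psi}_{0}\in\mathcal{H}^{k}$ and every admissible pair $(p,q)$, that $\|\partial^{\gamma}\vec{\psi}\|_{L_{t}^{p}([0,\infty),L_{x}^{q})}\le C_{k}\|\vec{\psi}_{0}\|_{\mathcal{H}^{k}}$. Already at $k=0$, $\gamma=0$ and $(p,q)=(2,6)$ this fails for $\vec{\psi}_{0}$ in a bound-state channel: there $\vec{\psi}(t)$ is, up to modulation and translation, a fixed exponentially decaying eigenfunction, so $\|\vec{\psi}(t)\|_{L_{x}^{6}}$ is bounded below and the $L_{t}^{2}([0,\infty))$ norm diverges. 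Theorem \ref{thm:Strich-maxtrix}, which you cite as the base case, explicitly assumes $\vec{\psi}_{0}$ is a scattering state in the sense of Definition \ref{scattm}, so it does not give $P(0)$ for general data. The same tension resurfaces in your inductive step when you claim $\|P_{b}(H_{j},t)v(t)\|_{L^{2}}\to0$: if $\vec{\psi}$ has a nontrivial bound-state component, then $v=\partial^{\gamma}\vec{\psi}$ contains a time-modulated derivative of an eigenfunction, and its pairing against $P_{b}(H_{j},t)$ neither vanishes nor decays.

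The fix is the reduction that the paper performs explicitly in the proof of Theorem \ref{thm:energy} and that you should make explicit here: first split $\vec{\psi}_{0}=(1-P_{s}(0))\vec{\psi}_{0}+P_{s}(0)\vec{\psi}_{0}$; the evolution of the first piece is a finite linear combination of transported, modulated, exponentially decaying (generalized) eigenfunctions of the $H_{j}$, whose $\mathcal{H}^{k}$ norms are uniformly bounded in $t$ by $\|\vec{\psi}_{0}\|_{L^{2}}$; then prove $P(k)$ only for scattering initial data (for which $U(t,0)\vec{\psi}_{0}=P_{s}(t)U(t,0)\vec{\psi}_{0}$), so that Theorem \ref{thm:Strich-maxtrix} genuinely gives the base case and the asymptotic-completeness decomposition of $\vec{\psi}$ contains no bound-state channels; finally combine the two pieces to obtain \eqref{eq:417} for general $\vec{\psi}_{0}$, taking $(p,q)=(\infty,2)$. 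With this reduction in place, the rest of your step — deriving the inhomogeneous equation for $v$, checking $v$ is (up to an $L^{2}$-bounded remainder) a scattering state via the convergence-rate estimate, and running the matrix weighted/Kato machinery together with the inhomogeneous endpoint Strichartz bound — is correct, and the bound on the non-scattering part of $v$ should of course read $\lesssim\|\vec{\psi}_{0}\|_{\mathcal{H}^{k}}$ rather than $\mathcal{H}^{1}$.
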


\end{document}